\documentclass[notes-porbanz,noinfoline]{notes-imsart}
%Option "preprint" is for FINAL version
%\documentclass[porbanz,preprint]{imsart-porbanz}
\pdfoutput=1

\RequirePackage[OT1]{fontenc}
\RequirePackage{amsthm,amsmath,amssymb}
\usepackage[usenames,dvipsnames]{color}
\RequirePackage[colorlinks,linkcolor=blue,citecolor=blue]{hyperref}
% Remove journal name at top left:
\setattribute{copyright}{text}{\empty}
% Remove journal name in PDF info metadata
\hypersetup{pdfinfo={Subject={ }}}
% Remove info line at bottom of file:
% Load imsart.cls with option [noinfoline]
% use this package if hyperref and natbib is used:
%\RequirePackage{hypernat}
\usepackage[numbers]{natbib}
\usepackage[capitalize]{cleveref}
% provide arXiv number if available:
%\arxiv{math.PR/0000000}

% put your definitions there:
\startlocaldefs
\numberwithin{equation}{section}
\theoremstyle{plain}
\pdfoutput=1

\theoremstyle{plain}

% Environment numbering: Consecutively within section
\newtheorem{theorem}{Theorem}[section]
\newtheorem{lemma}[theorem]{Lemma}
\newtheorem{proposition}[theorem]{Proposition}
\newtheorem{corollary}[theorem]{Corollary}
\theoremstyle{definition}

\newtheorem{remark}[theorem]{Remark}

\newcommand{\kword}[1]{{\bf #1}\index{keywords}{#1}}

\def\ie{i.e.\ }
\def\eg{e.g.\ }
\def\mean#1{\mathbb{E}[#1]}

\def\Bigmean#1{\mathbb{E}\Bigl[#1\Bigr]}

\def\equdist{\stackrel{\mbox{\tiny d}}{=}}
\def\iid{i.i.d.\ }
\def\condind{{\perp\!\!\!\perp}}
\def\LAW{\mathcal{L}}

\def\abstmeasure{\mathbb{P}}

\def\xspace{\mathcal{X}}
\def\yspace{\mathcal{Y}}

\def\kword#1{\textbf{#1}}
\def\Z{\mathbf{Z}}

\def\U{\mathbf{U}}

\def\atomspace{\mathbb{U}}
\def\argdot{\bullet}
\def\equdist{\stackrel{\mbox{\tiny\rm d}}{=}}
\def\BP#1{\text{\rm BeP}(#1)}
\def\Bernoulli{\mbox{Bernoulli}}

\def\simiid{\sim_{\mbox{\tiny\rm iid}}}
\def\Law{\mathcal{L}}
\def\mass#1{T_{#1}}
\def\PK{\mbox{\rm PK}}
\def\Beta{\text{\rm Beta}}
\def\Poisson{\text{\rm Poisson}}
\def\GammaDist{\text{\rm Gamma}}
\def\Stable{\text{\rm Stable}}
\def\Uniform{\text{\rm Uniform}}
\def\indicator#1{\mathbb{I}\lbrace #1 \rbrace}
\def\Levy{L\'evy }
\def\PD{\text{\rm PD}}

\def\DeltaAlt{\Delta^{\!\circ}}
\def\xiAlt{\xi^{\circ}}
\def\xiAltAlt{\eta}
\def\TAlt{T^{\circ}}
\def\PAlt{P^{\circ}}
\def\QAlt{Q^{\circ}}
\def\PK{\text{\rm PK}}
\def\IBP{\text{\rm IBP}}
\def\BeP{\text{\rm BeP}}
\def\CRP{\text{\rm CRP}}

\def\JOT{\text{\rm GD}} % Our class of random measures

\def\IBP{\text{\rm IBP}}
\def\dtv{d_{\text{\tiny TV}}}
\def\ind#1{\mbox{\tiny #1}}
\def\RS{\Sigma^{\ind{R}}}

\def\Ma{M^{\ind{\rm ($a$)}}}

\usepackage{booktabs}
\usepackage{tikz}
\usetikzlibrary{matrix,arrows,shapes.geometric,backgrounds,fit,chains,positioning,scopes}
\endlocaldefs

\begin{document}

\begin{frontmatter}
  \title{Scaled subordinators and generalizations of the Indian buffet process}
  \runtitle{Scaled subordinators}
  % indicate corresponding author with \corref{}
  %\thankstext{t1}{Thanks to somebody} 
  \begin{aug}
    \author{\fnms{Lancelot F.\ }\snm{James}\ead[label=e1]{lancelot@ust.hk}},
    \author{\fnms{Peter\ }\snm{Orbanz}\corref{}\ead[label=e2]{porbanz@stat.columbia.edu}}
    \and
    \author{\fnms{Yee Whye\ }\snm{Teh}%
      \ead[label=e3]{y.w.teh@stats.ox.ac.uk}}%
    %\thankstext{t2}{Footnote to the first author with the `thankstext' command.}
    \runauthor{James, Orbanz and Teh}
    \affiliation{HKUST, Columbia University and University of Oxford}
    \address{Deparment of Information Systems, Business\\ Statistics, and Operations Management\\
      Clear Water Bay, Kowloon\\
      Hong Kong\\
      \printead{e1}
    }
    \address{Department of Statistics\\
      1255 Amsterdam Avenue\\
      New York, NY-10027, USA\\
      \printead{e2}
    }
    \address{Department of Statistics\\
      1 South Parks Road\\
      Oxford OX1 3TG, U.K.\\
      \printead{e3}
    }    
  \end{aug}
  \maketitle
  \begin{abstract}
    We study random families of subsets of $\mathbb{N}$ that are similar to exchangeable random partitions, but do not require constituent sets to be disjoint: Each element of ${\mathbb{N}}$ may be contained in multiple subsets. One class of such objects, known as Indian buffet processes, has become a popular tool in machine learning. Based on an equivalence between Indian buffet and scale-invariant Poisson processes, we identify a random scaling variable whose role is similar to that played in exchangeable partition models by the total mass of a random measure. Analogous to the construction of exchangeable partitions from normalized subordinators, random families of sets can be constructed from randomly scaled subordinators. Coupling to a heavy-tailed scaling variable induces a power law on the number of sets containing the first $n$ elements. Several examples, with properties desirable in applications, are derived explicitly. A relationship to exchangeable partitions is made precise as a correspondence between scaled subordinators and Poisson-Kingman measures, generalizing a result of Arratia, Barbour and Tavar\'e on scale-invariant processes.
  \end{abstract}
  \begin{keyword}[class=MSC]
    \kwd[Primary ]{60G57}
    \kwd{60C05}
    \kwd[; secondary ]{60E99}
    \kwd{60G52}
  \end{keyword}
  
  \begin{keyword}
    \kwd{Random measures, Indian Buffet process, scale invariant process, Dickman
      distribution, machine learning, relational models}
  \end{keyword}
\end{frontmatter}

\def\rho{\lambda}

\section{Overview and main results}
\label{sec:introduction}

The Indian buffet process, or IBP, of \citet{Griffiths:Ghahramani:2006} is a 
distribution on families of sets, encoded as binary matrices.
This model and its two- and three-parameter generalizations 
\citep{Thibeaux:Jordan:2007,Teh:Goerur:2009} have received considerable attention in
machine learning \citep{Ghahramani:Griffiths:Sollich:2006:1,
  Zhou:Hannah:Dunson:Carin:2012:1,
  Titsias:2008:1,
  Paisley:Zaas:Ginsburg:Woods:Carin:2010,
  %Paisley:Blei:Jordan:2011,
  Broderick:Jordan:Pitman:2012:1,
  Heaukulani:Roy:2015:1},
where applications include dyadic data \citep{Meeds:Ghahramani:Neal:Roweis:2007},
link prediction \citep{miller2009nonparametric},
time series \citep{Fox:AOAS2014},
user preference data \citep{navarro2008latent},
and networks \citep{Caron:2012:1}.
Theoretical results include the work of \citet*{Broderick:Jordan:Pitman:2013:B}, who
generalize exchangeable partition probability functions to families of sets,
and of \citet*{Berti:Crimaldi:Pratelli:Rigo:2014:1}, who establish central limit theorems.

Random families of sets are related to random partitions of $\mathbb{N}$---the former differ from
the latter in that each ${n\in\mathbb{N}}$ may be contained in multiple blocks, or in no block at all.
The law of an exchangeable random partition can be
parametrized by a random probability measure, via Kingman's representation \citep[see][]{Pitman:2006}.
Similarly, the laws of a large class of random families
of sets can be parametrized by random measures, though these are not normalized
\citep{Thibeaux:Jordan:2007}.

The random measure defining the one-parameter IBP of \citet{Griffiths:Ghahramani:2006} 
derives from the scale-invariant Poisson process \citep[e.g.][]{Arratia:1998:1,Penrose:Wade:2004:1}.
We use this process to relate the IBP to the one-parameter Chinese restaurant process, 
via a remarkable result of \citet*{Arratia:Barbour:Tavare:1999:1}.
Beyond the scale-invariant case, a random scaling variable arises naturally, 
and we show that Poisson-Kingman partitions \citep{Pitman:2003:1} correspond
to a class of random set families defined by randomly scaled subordinators. For the
IBP and CRP, this result specializes to that of \citet{Arratia:Barbour:Tavare:1999:1}.
The random measures so defined have a number of generic properties, including
a ``stick-breaking'' representation. Substituting a stable for the scale-invariant
subordinator, we obtain explicit examples, some with properties reminiscent
of the two-parameter Poisson-Dirichlet \citep{Pitman:Yor:1997}. 
If the scaling variable is chosen heavy-tailed, the number of subsets containing
the first $n$ elements of $\mathbb{N}$ follows a power law. A different but related
construction, based on scaling by a variable derived by 
\citet*{Bertoin:Fujita:Roynette:Yor:2006:1},
yields simultanuous power laws on the number and the sizes of subsets.

We begin with a summary of the main results, including a brief review of definitions and 
preliminaries,
in the remainder of this. 
Technical details and specific examples follow
in Sections \ref{sec:construction}--\ref{sec:tv}. All proofs are collected in the appendix.

\subsection{Background}
\label{sec:overview:review}

A \kword{latent feature model} generates a random binary ${n\times\infty}$ matrix $\Z$ and a 
random sequence ${\U=(U_1,U_2,\ldots)}$ with entries in some space $\atomspace$. 
Suppose data ${X_1,\ldots,X_n}$ is observed, say in $\mathbb{R}^d$, 
where each observation $X_i$ is a list of measurements describing an object $i$. Given a suitable family of distributions $P_{\phi}$ on the sample space, a
latent feature model explains such data as
\begin{equation}
  \label{eq:latent:feature:model}
  X_{ij}\sim P_{\phi_{ij}}
  \qquad\text{ where }
  \phi_{ij}:=(\Z_{i1}U_{1},\Z_{i2}U_{2},\ldots)\;.
\end{equation}
The model assumes an unobserved set of properties, 
called \kword{features}; each object $i$ may posses feature $k$ (${\Z_{ik}=1}$), or not possess it (${\Z_{ik}=0}$). 
The variable $\Z_{ik}$ acts as a switch that turns the effect of parameter $U_{j}$ on $X_{ij}$ on 
or off. Throughout, we always require
\begin{equation}
  \label{eq:row:sum:finite}
  \textstyle{\sum_{k}} \Z_{ik}<\infty \qquad\text{for all }i=1,\ldots,n\;.
\end{equation} 

Various forms of this model are widely used in both machine learning and statistics. Here, 
$\Z$ and $\U$ constitute the model parameters; since both are random variables, the model is Bayesian.
Since both have an infinite number of columns,
the model is nonparametric; a parametric model would be obtained by restricting $\Z$ and $\U$
to a fixed, finite number of columns. 
Typically, however, only a finite (if unbounded) number of parameters should explain
any single observation, which is ensured by the constraint \eqref{eq:row:sum:finite}.
Perhaps the most widely used example of a latent feature model assumes
the parameters $U_{j}$ are vectors of scalar effects that combine linearly, and the only additional randomness are additive, independent noise contribution $\varepsilon_{ij}$, in which case \eqref{eq:latent:feature:model} takes the form
${X_{ij}=\sum_{k}\Z_{ik}U_{jk} + \varepsilon_{ij}}$. 
This model and its variants are encountered in statistics as linear factor analyzers, and in machine learning and
as collaborative filtering methods; see e.g.\ \citep{Griffiths:Ghahramani:2011}.

In more probabilistic terms, latent features models are closely related to random partitions 
of the integers: Encode a partition of $\mathbb{N}$ into disjoint blocks
as a binary matrix $\Z$,
where ${\Z_{ik}=1}$ if $i$ is in block $k$ of the partition, and ${\Z_{ik}=0}$ otherwise. 
Since blocks are disjoint, ${\sum_k\Z_{ik}=1}$ for all $i$.
A latent feature model relaxes this row sum
constraint to the finiteness constraint \eqref{eq:row:sum:finite}.
For example, for the subset ${\lbrace 1,\ldots,6\rbrace}$, the two matrices
\vspace{-.05cm}
\begin{center}
\begin{tikzpicture}
  \begin{scope}
    \node at (0,0) {$
      \left(
      \begin{matrix}
        1 & 0 & 0 & 0 \\
        1 & 0 & 0 & 0 \\
        0 & 1 & 0 & 0 \\
        1 & 0 & 0 & 0 \\
        0 & 0 & 1 & 0 \\
        0 & 0 & 0 & 1 
      \end{matrix}\right)
      $};
    \node at (0,1.7) {block \#};
    \node[rotate=90] at (-1.7,0) {object \#};
    %\node at (0,-1.8) {partition};
  \end{scope}
  \begin{scope}[xshift=5.5cm]
    %\node at (0,2.9) {\sc Family of sets};
    \node at (0,0) {$
      \left(
      \begin{matrix}
        1 & 0 & 0 & 0  \\
        1 & 0 & 0 & 0  \\
        1 & 1 & 0 & 1  \\
        1 & 0 & 1 & 0  \\
        0 & 0 & 1 & 0  \\
        0 & 0 & 0 & 1 
      \end{matrix}\right)
      $};
    \node at (0,1.7) {block \#};
    \node[rotate=90] at (-1.7,0) {object \#};
    %\node at (0,-1.8) {family of sets};
  \end{scope}
\end{tikzpicture}
\end{center}
respectively represent the partition and family of sets
\begin{equation}
  (\lbrace 1,2,4 \rbrace,
  \lbrace 3 \rbrace,
  \lbrace 5 \rbrace, 
  \lbrace 6 \rbrace
  )\;
  \qquad\text{ and }\qquad
  ( 
  \lbrace 1,2,3,4 \rbrace,
  \lbrace 3 \rbrace,
  \lbrace 4,5\rbrace,
  \lbrace 3,6 \rbrace
  )\;.
\end{equation}
The random matrix $\Z$ in a latent feature model thus encodes a random family of sets.
In the following, we consider two types of representations for the law of $\Z$:
\begin{itemize}
\item \kword{Urn schemes} generate each row of $\Z$ conditionally on the
  previous ones. Following the interpretation of $\Z$ as a generalization of a random
  partition to a family of sets, these urn schemes can be regarded as a generalization
  of the Blackwell-MacQueen urn scheme.
  \vspace{.1cm}
\item \kword{Hierarchical representations} first generate a random discrete measure 
  \begin{equation}
    \label{eq:hierarchical:rmeasure}
    \xi(\argdot)\equdist\sum_{k\in\mathbb{N}}J_k\delta_{U_k}(\argdot) 
    \qquad\text{ where }J_1,J_2,\ldots\in[0,1] \text{ a.s.}
  \end{equation}
  on $\atomspace$.
  The matrix $\Z$ is then generated column-wise as
  \begin{equation}
    \label{eq:Bernoulli:sampling:basic}
    \Z_{1k},\ldots,\Z_{nk}|(J_k)\simiid\Bernoulli(J_k) \;.
  \end{equation}
\end{itemize}
The hierarchical representation generates matrices whose row vectors form an
exchangeable sequence. That is not typically true for urn schemes, since each row is generated
conditionally on the previous one. The encoding of family of sets by $\Z$ is not unique, however,
and we call two matrices \kword{equivalent} if both encode the same family of sets, and
denote the equivalence class of $\Z$ as ${[\Z]}$. We call $\Z_1$ and $\Z_2$
\kword{equivalently distributed} if ${[\Z_1]\equdist [\Z_2]}$.

The urn scheme representation was first used by \citet{Griffiths:Ghahramani:2006},
who introduced a latent feature model which they called the \kword{Indian buffet process} (IBP).
To specify urn schemes, we frequently rely on the quantities
\begin{equation}
  \label{eq:nk:Kn}
  n_k:=\sum_{i=1}^n\Z_{ik}
  \qquad\text{ and }\qquad
  K_n:=\sum_k\indicator{n_k>0}\;,
\end{equation}
\ie in terms of the object/feature interpretation, $n_k$ of the first $n$ observed object possess feature $k$,
and $K_n$ is the total number of distinct feature exhibited by the first $n$ objects.
The following version is due to \citet*{Ghahramani:Griffiths:Sollich:2006:1}:
Choose ${c,\theta>0}$.
\begin{enumerate}
\item The first row of $\Z$ contains ${K_1\sim\Poisson(c)}$ consecutive 
  non-zero entries.
\item In row ${n+1}$, generate the first $K_n$ entries as 
    ${\Z_{n+1,k}\sim\Bernoulli\bigl(\frac{n_k}{\theta+n}\bigr)}$,
  then append ${\Poisson\bigl(\frac{c\theta}{\theta+n}\bigr)}$
  consecutive non-zero entries.
\end{enumerate}
By fixing ${\theta=1}$, one obtains the original, one-parameter IBP in \citep{Griffiths:Ghahramani:2006}.

The hierarchical representation using a random measure 
was introduced as an alternative
construction of the IBP by \citet{Thibeaux:Jordan:2007}, 
who showed that the matrix $\Z$ generated by \eqref{eq:Bernoulli:sampling:basic}
is equivalently distributed to an ${\IBP(c,\theta)}$ matrix if the sequence ${(J_k)}$
is generated as the jumps of a subordinator with \Levy density
\begin{equation}
  \label{eq:beta:process}
  \rho(s)=\frac{c}{\theta}
  s^{-1}(1-s)^{\theta-1}\;.
\end{equation}
A random measure ${\xi=\sum_kJ_k\delta_{U_k}}$ is called \kword{homogeneous} if the
atom locations ${U_1,U_2,\ldots}$ are \iid variables and independent of the sequence $(J_k)$.
If the weight sequence $(J_k)$ of a homogeneous random measure is generated by a 
subordinator, then $\xi$ is in particular a \kword{completely random measure} (CRM)
in the sense of \citet{Kingman:1967}. Completely random measures with 
\Levy density \eqref{eq:beta:stable:process} were introduced in this form by \citet{Hjort:1990},
who called them \kword{beta processes}. They also appear, up to a transformation
${s\mapsto -\log(1-s)}$, in \citep{Ferguson:Phadia:1979}.
\citet{Teh:Goerur:2009} generalize \eqref{eq:beta:stable:process},
with an additional parameter ${\alpha>-\theta}$, as
\begin{equation}
  \label{eq:beta:stable:process}
  \rho(s)ds=\frac{c}{B(\alpha+\theta,1-\alpha)}
  s^{-\alpha-1}(1-s)^{\theta+\alpha-1}ds
 \end{equation}
where $B$ is the beta function. The resulting CRM is called the \kword{stable-beta process}.
Via \eqref{eq:Bernoulli:sampling:basic}, the stable-beta process defines a three-parameter
generalization ${\IBP(c,\theta,\alpha)}$ of the Indian buffet process.
For ${\alpha>0}$, the weights of a stable-beta CRM exhibit a power law; hence, the column
sums of the resulting random matrix also have a power law distribution. Note
\eqref{eq:beta:stable:process} is of the form 
${\rho(s)=\frac{1}{s}f(s)}$, where $f$ is the density of the structural distribution
of a two-parameter Poisson-Dirichlet process \citep[][\S 2.3]{Pitman:2006}.
This identity implies the stick-breaking representation of
\citep{Paisley:Zaas:Ginsburg:Woods:Carin:2010}
and 
\citep{Broderick:Jordan:Pitman:2011}.

\subsection{Desiderata}

In principle, a distribution for $\Z$ can be specified by choosing any
suitable random measure $\xi$ and applying \eqref{eq:Bernoulli:sampling:basic}.
An urn scheme can then be obtained by integrating
out the random measure $\xi$ and conditioning each row of $\Z$ on the previous rows.
For a model to be useful, however, it should satisfy a number of properties:
\begin{itemize}
\item The hierarchical representation should exist, \ie the rows of $\Z$ should
  be rendered conditionally independent by some random measure.
\item The random measure $\xi$ should be tractable, \ie it should be possible to 
  simulate its weights and atoms. The beta and stable-beta process above
  can both be simulated using ``stick-breaking constructions''.
\item The urn scheme should be tractable, \ie it should be possible to sample from
  conditionals of the form ${\Law(\Z_{n+1}|\Z_1,\ldots,\Z_n)}$, where $\Z_i$ denotes
  the $i$th row of $\Z$. Note
  the Griffiths-Ghahramani urn scheme requires only Bernoulli
  and Poisson variables.
\item Ideally, the conditional ${\Law(\xi|\Z_1,\ldots,\Z_n)}$ should also be 
  available. It can be used to derive the urn scheme from 
  \eqref{eq:Bernoulli:sampling:basic}, and may also be of interest in its own right
  as the posterior distribution of $\xi$ in the sense of Bayesian statistics.
\end{itemize}
Although most models will only admit either a hierarchical representation or a tractable
urn scheme, the analogy to random 
partitions suggests some distinguished cases may exist---such as the
one- and two-parameter Poisson-Dirichlet in the partition case---for which 
several or all above properties hold. At present, the 
IBP family seems to be the only known class of models for which that is the case; our objective
in the following is to identify others.

\subsection{Main results}
\label{sec:overview:results}

For use in the hierarchical construction \eqref{eq:Bernoulli:sampling:basic}, 
a random measure must almost surely satisfy
\begin{equation}
  \label{eq:unitary}
  J_1,J_2,\ldots\in [0,1]
  \qquad\text{ and }\qquad
  \xi(\atomspace)<\infty\;,
\end{equation}
where the second condition ensures \eqref{eq:row:sum:finite}.
We call a discrete measure \kword{unitary} if it satisfies \eqref{eq:unitary}.

In the following, we consider a class ${\JOT(\rho,\PAlt)}$ of unitary random
measures parametrized by a continuous \Levy density $\rho$ and a probability measure $\PAlt$, both
defined on $\mathbb{R}_+$. We refer to these measures as \kword{generalized Dickman measures},
for reasons explained in \cref{sec:stable}. 
Let ${\Delta_1>\Delta_2>\ldots}$ be the jumps of subordinator with \Levy
density $\rho$, ordered by decreasing size, and define a random measure $\xi$ as
\begin{equation}
  \label{eq:scaled:HCRM:basic}
  \xi(\argdot):={\textstyle \sum_{k\in\mathbb{N}}}
  J_k\delta_{U_k}(\argdot) \qquad\text{ where } J_k:=\frac{\Delta_{k+1}}{\Delta_1}\;,
\end{equation}
so ${J_k\in[0,1]}$ almost surely. 
We denote the law of $\xi$ generically by $\JOT(\rho)$.

The variables ${\Delta_2,\Delta_2,\ldots}$, with the largest
jump $\Delta_1$ removed, are conditionally independent given $\Delta_1$, so
${\Delta_{2:\infty}|\Delta_1}$ is a subordinator. Conditioning on ${\Delta_1=a}$ does not
change the the \Levy density on $[0,a]$, but truncates it at $a$.
The conditional random measure
\begin{equation}
  \label{eq:xi:a}
  \xi_a:=\xi|(\Delta_1=a)
\end{equation}
is hence unitary and completely random, with \Levy density
\begin{equation}
  \label{eq:rho:scaled}
  \rho_a(s)=a\rho(as)\indicator{s\leq 1}\;.
\end{equation}
A natural further step is to randomize $a$:
Choose any non-negative random variable ${\DeltaAlt\sim P^{\circ}}$ and define
the unitary random measure
\begin{equation}
  \label{eq:scaled:HCRM:mixed}
  \xi^{\circ}
  \;
  :=
  \;
  \xi_{\DeltaAlt}
  \;
  =
  \;
  \xi|(\Delta=\DeltaAlt)
  \;.
  %% =
  %% \;
  %% \int_{0}^{\infty} (\xi|\Delta_1=a)(\argdot)P^{\circ}(da)\;,
\end{equation}
The law of $\xiAlt$ is denoted ${\JOT(\rho,\PAlt)}$. 
Clearly, choosing $\DeltaAlt$ as $\Delta_1$ recovers ${\xiAlt\sim\JOT(\rho)}$.

Scaling subordinators yields a natural stick-breaking representation: Since
the weights of a random measure
${\xiAlt\sim\JOT(\rho,\PAlt)}$ can be expanded as
\begin{equation}
  \label{eq:stable:Jk:Rk}
  J_k
  =
  \frac{\Delta_{k+1}}{\DeltaAlt}
  =
  \frac{\Delta_{k+1}}{\Delta_k}\frac{\Delta_k}{\Delta_{k-1}}\cdots\frac{\Delta_2}{\DeltaAlt}\;,
\end{equation}
$\xiAlt$ can be represented as 
\begin{equation}
  \label{eq:xi:sb}
  \xiAlt\equdist
  \sum_{k=1}^{\infty}\Bigl(\prod_{j=1}^k R_k\Bigr)\delta_{U_k}
  \qquad\text{ where }
  R_k:=\frac{\Delta_{k+1}}{\Delta_k}
  \text{ and }
  R_1:=\frac{\Delta_{2}}{\DeltaAlt}
  \;.
\end{equation}
Following \citet{Kingman:1975}, the density of the largest jump of a subordinator
can always be obtained explicitly as
\begin{equation}
  \label{eq:density:largest:jump}
  f_{\Delta_1}(s)=\rho(s)e^{-\Lambda(s)} 
  \qquad\text{ where }\qquad
  \Lambda(s):=\int_s^{\infty}\rho(u)du\;.
\end{equation}
Whenever the function $\Lambda$ has a computationally tractable inverse, the 
representation above can be made explicit:

\begin{theorem}%[Stick-breaking in the stable case]
  \label{theorem:sb}
  For any random measure
  ${\xiAlt\in\JOT(\rho,\PAlt)}$, define the right-continuous inverse 
  ${{\Lambda}^{\! -}(s):=\inf\lbrace t|\Lambda(t)\geq s\rbrace}$ of 
  $\Lambda$ in \eqref{eq:density:largest:jump}, and let
  ${E_1,E_2,\ldots\simiid\text{\rm Exponential}(1)}$. Then for any ${a>0}$,
  \begin{equation}
    \label{eq:theorem:sb:stable:dist:identity}
    (\Delta_2,\Delta_3,\ldots)\big|\DeltaAlt=a
    \quad\equdist\quad
    (\Ma_2,\Ma_3,\ldots)
  \end{equation}
  where ${\Ma_k:=\Lambda^{\! -}\bigl(\Lambda(a)+E_2+\ldots+E_k\bigr)}$ for every ${k\geq 2}$.
\end{theorem}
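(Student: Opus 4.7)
The plan is to prove the theorem by transforming the Poisson point process of jumps via $\Lambda$ into a unit-rate Poisson process on a half-line, where the representation via exponential gaps is classical.

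First, I would pin down the conditional distribution of $(\Delta_2,\Delta_3,\ldots)$ given $\DeltaAlt=a$. By definition of $\xiAlt$ in \eqref{eq:scaled:HCRM:mixed}, conditioning on $\DeltaAlt=a$ is the same as conditioning the underlying subordinator on $\Delta_1=a$. The ordered jumps of that subordinator are the points of a Poisson point process $N$ on $(0,\infty)$ with intensity $\rho(s)ds$. Because counts on disjoint sets are independent, the event $\{\Delta_1=a\}$ (a point at $a$ together with no points in $(a,\infty)$) leaves the restriction of $N$ to $(0,a)$ unchanged in law. Hence, conditionally on $\DeltaAlt=a$, the decreasing sequence $\Delta_2>\Delta_3>\cdots$ is the decreasing enumeration of a Poisson point process on $(0,a)$ with intensity $\rho$.

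Next, I apply the mapping theorem to the strictly decreasing tail function $\Lambda$ from \eqref{eq:density:largest:jump}. On the support of $\rho$ one formally has $d\Lambda(s)/ds=-\rho(s)$, so the pushforward of $\rho(s)ds\cdot\indicator{0<s<a}$ under $s\mapsto\Lambda(s)$ is the Lebesgue measure restricted to $(\Lambda(a),\infty)$. Thus $\{\Lambda(\Delta_k):k\ge 2\}$ is a unit-rate Poisson point process on $(\Lambda(a),\infty)$, and its points listed in increasing order are precisely $\Lambda(\Delta_2)<\Lambda(\Delta_3)<\cdots$ (since $\Lambda$ reverses order). A standard gap description of the homogeneous Poisson process on a half-line gives
\begin{equation*}
\Lambda(\Delta_k)\;\equdist\;\Lambda(a)+E_2+E_3+\cdots+E_k\qquad\text{for all }k\ge 2,
\end{equation*}
jointly in $k$, with ${E_2,E_3,\ldots\simiid\text{Exponential}(1)}$. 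Inverting $\Lambda$ via its right-continuous inverse $\Lambda^{-}$ yields $\Delta_k\equdist\Lambda^{-}(\Lambda(a)+E_2+\cdots+E_k)=\Ma_k$, which is \eqref{eq:theorem:sb:stable:dist:identity}.

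The main technical point to be careful about is the use of $\Lambda^{-}$ rather than a genuine inverse: $\Lambda$ may fail to be strictly decreasing or continuous if $\rho$ has zeros or atoms on its support. I would handle this by noting that the sub-level sets $\{\Lambda\ge t\}$ have the form $(0,\Lambda^{-}(t)]$, so the mapping-theorem computation goes through unchanged at the level of distributions, and $\Lambda\circ\Lambda^{-}(t)=t$ holds almost everywhere on the range of $\Lambda$, which is all that the Poisson-process calculation requires. The only other place that requires a small argument is the conditioning on $\Delta_1=a$, which can be justified either via a regular conditional distribution computed from the joint density \eqref{eq:density:largest:jump} together with the Palm formula for Poisson processes, or by the standard disintegration of $N$ at its maximal atom.
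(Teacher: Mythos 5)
Your proof is correct, and it arrives at the same underlying fact as the paper---that $\Lambda$ carries the ranked jumps to the arrival times of a unit-rate Poisson process---but by a genuinely different mechanism. The paper argues sequentially: it invokes the Markov property of the ranked jumps, notes that $\Delta_{k:\infty}\mid(\Delta_{k-1}=y)$ is a subordinator with \Levy density $\rho(s)\indicator{s\le y}$ whose largest jump has conditional cdf $F(s\mid y)=e^{-(\Lambda(s)-\Lambda(y))}$, and applies the probability integral transform at each step to produce the i.i.d.\ exponential increments. You instead condition once on $\Delta_1=a$ to obtain a Poisson process with intensity $\rho$ on $(0,a)$, push it forward through $\Lambda$ via the mapping theorem to a homogeneous process on $(\Lambda(a),\infty)$, and read off the classical gap representation. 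Your route treats the joint law of all coordinates in a single step and makes the role of the generalized inverse transparent, at the cost of having to justify the null-event conditioning on $\{\Delta_1=a\}$ (which you correctly defer to a disintegration or Palm argument); the paper's route sidesteps both the mapping theorem and that disintegration by starting from the known one-dimensional conditional densities, but must then assemble the joint law through the Markov chain structure. Your technical caveats are the right ones: the generalized inverse only misbehaves on intervals where $\rho$ vanishes, which the point process avoids almost surely. One cosmetic remark: taken literally, $\Lambda^{\!-}(s)=\inf\lbrace t\mid\Lambda(t)\ge s\rbrace$ would be identically zero for a decreasing $\Lambda$ with $\Lambda(0+)=\infty$; your reading, under which $\lbrace t\mid\Lambda(t)\ge s\rbrace=(0,\Lambda^{\!-}(s)]$, i.e.\ a supremum rather than an infimum, is clearly the intended one.
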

One can hence simulate $\xiAlt$ by generating ${\DeltaAlt\sim\PAlt}$ and
\begin{equation}
  \label{eq:xi:sb:explicit}
  \xiAlt|(\DeltaAlt=a) \equdist
  \sum_{k\in\mathbb{N}}
  \Bigl(\prod_{j=1}^k
  \frac{\Ma_{k+1}}{\Ma_{k}}
  \Bigr)  
  \qquad\text{ where }\qquad \Ma_1:=a\;.
\end{equation}
In the stable case discussed in \cref{sec:stable}, $\Lambda$ is indeed invertible, and
the random variables $\Ma_k$ have explicit representations in terms of 
${\Lambda^{\! -}=\Lambda^{-1}}$. Even if $\Lambda$ is not invertible, it is typically still
possible to sample $\Ma_k$.

%% \begin{theorem}%[Stick-breaking in the stable case]
%%   \label{theorem:sb}
%%   Choose a random measure
%%   ${\xiAlt\in\JOT(\rho,\PAlt)}$ such that the function
%%   $\Lambda$ in \eqref{eq:density:largest:jump} is invertible, and let
%%   ${E_1,E_2,\ldots\simiid\text{\rm Exponential}(1)}$. Then for any ${a>0}$,
%%   \begin{equation}
%%     \label{eq:theorem:sb:stable:dist:identity}
%%     (\Delta_2,\Delta_3,\ldots)\big|\DeltaAlt=a
%%     \quad\equdist\quad
%%     (\Ma_2,\Ma_3,\ldots)
%%   \end{equation}
%%   where ${\Ma_k:=\Lambda^{-1}\bigl(\Lambda(a)+E_2+\ldots+E_k\bigr)}$ for every ${k\geq 2}$.
%% \end{theorem}
%% One can hence simulate $\xiAlt$ by generating ${\DeltaAlt\sim\PAlt}$ and
%% \begin{equation}
%%   \label{eq:xi:sb:explicit}
%%   \xiAlt|(\DeltaAlt=a) \equdist
%%   \sum_{k\in\mathbb{N}}
%%   \Bigl(\prod_{j=1}^k
%%   \frac{\Ma_{k+1}}{\Ma_{k}}
%%   \Bigr)  
%%   \qquad\text{ where }\qquad \Ma_1:=a\;.
%% \end{equation}

In terms of the desiderata listed above, a matrix $\Z$ generated from a 
${\JOT(\rho,\PAlt)}$ measure with a sufficiently
regular function $\Lambda$ thus has a hierarchical representation by a
tractable random measure; that leaves the conditional
${\Law(\xi|\Z_1,\ldots,\Z_n)}$ and the urn scheme. 
General results on conditional distributions of $\xi$ and $\Z_{n+1}$
are given in \cref{sec:construction}. 
We mention one elementary (but apparently overlooked) fact that may be
of independent interest in modeling and simulation problems:
If ${\mu=\sum_k W_k\delta_{U_k}}$ is a homogeneous CRM with \Levy density $\rho$,
and $h$ is a positive function, the random measure
\begin{equation}
  \mu_h:=\sum_kB_kW_k\delta_{U_k}
  \qquad\text{ where }\qquad B_k|\mu\sim\Bernoulli(h(B_k))
\end{equation}
is distributed as a homogeneous CRM with \Levy density ${h\rho}$.
Thus, tilting by $h$ can be simulated by thinning; see \cref{lemma:thinning}.

To obtain tractable urn schemes, one has to consider specific models, and
a case of particular interest are \Levy densities of the form 
\begin{equation}
  \label{eq:stable:scaleinvariant}
  \rho(s)=c s^{-1-\alpha}\qquad\text{ for } \alpha\geq 0 \text{ and }c>0\;,
\end{equation}
By construction, the measure $\xiAlt$ is not completely random, since scaling by $\Delta_1$ makes the
variables $J_k$ dependent---unless one chooses $\rho$ as in \eqref{eq:stable:scaleinvariant}
and ${\alpha=0}$. Since \eqref{eq:stable:scaleinvariant} implies 
\begin{equation}
  \label{eq:stable:scaleinvariant:rho_a}
  \rho_a(s)=c a^{-\alpha}s^{-1-\alpha}\;,
\end{equation}
$\xi_a$ does not depend on $a$ for ${\alpha=0}$, and $\xiAlt$ is hence a CRM. 
If this random measure $\xiAlt$ is substituted into 
\eqref{eq:Bernoulli:sampling:basic}, one obtains precisely the 
one-parameter $\IBP(c)$ distribution of \citet{Griffiths:Ghahramani:2006}.
It is interesting to note that the \Levy density is, in this case, the
intensity of a scale-invariant Poisson process; this relationship is discussed
in more detail in \cref{sec:Dickman}. 

For ${\alpha>0}$, \eqref{eq:stable:scaleinvariant} describes a stable
subordinator. The one-parameter IBP can hence be regarded as a limiting
case of the stable for ${\alpha\searrow 0}$.
Stick-breaking and conditional probabilities
for the stable case are covered in \cref{sec:stable}. One implication of
these results is that the stable case admits a simple urn scheme:
If ${\xiAlt\sim\JOT(cs^{-\alpha-1},\PAlt)}$ for ${\alpha>0}$, and ${\xiAlt}$ is substituted for 
$\xi$ in \eqref{eq:Bernoulli:sampling:basic}, the resulting matrix $\Z$ can equivalently be generated as 
follows.
\begin{enumerate}
\item The first row of $\Z$ contains $\Poisson(\frac{(\DeltaAlt)^{-\alpha}\alpha}{1-\alpha})$ consecutive non-zero
  entries. 
\item In the $(n+1)$st row, each of the first $K_n$ entries is non-zero
  with probability $\frac{n_k-\alpha}{n+1-\alpha}$. 
  Additionally, $\Poisson(C_n)$ consecutive non-zero entries are appended.
\end{enumerate}
The random variable $C_n$ is a function of a sample from the conditional distribution of $\DeltaAlt$
given the previous first $n$ rows of $\Z$, and defined in detail in
\cref{result:posterior:stable:alpha:alpha}.

Since latent feature models are related to exchangeable partitions, and both are 
generated by a class of random measures, it is obvious to ask how
these classes of random measures are related. Simply normalizing generalized Dickman measures
does not generally seem to yield interesting random probability measures. 
A different picture
emerges, however, if one first conditions on a total mass ${\xiAlt(\atomspace)=t}$
of ${t\leq 1}$:
\begin{theorem}
  \label{theorem:pk:simplified}
  Let ${\xiAlt\sim\JOT(\rho,\PAlt)}$, for any choice of $\rho$ and $\PAlt$. 
  Condition $\xiAlt$ on its total mass and normalize, defining the measure
  \begin{equation}
    \overline{\eta}:=\frac{\xiAlt|(\xiAlt(\atomspace)=t)}{t}\;.
  \end{equation}
  Whenever ${t\in(0,1]}$, the random probability measure 
  $\eta$ is a Poisson-Kingman measure. Conversely, every Poisson-Kingman
    measure can be obtained in this manner.
\end{theorem}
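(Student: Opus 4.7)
The plan is to prove the two directions separately. For the forward direction I unpack the construction of $\xiAlt$ as a subordinator with its largest jump prescribed, and then eliminate the resulting truncation using the bound $t \leq 1$; for the converse I invert the resulting formula to solve for $\PAlt$.

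Reparametrize by $A := \DeltaAlt$ and $Y_k := \Delta_{k+1}$, so that conditionally on $A = a$ the $(Y_k)$ are the ranked atoms of a Poisson process with intensity $\rho$ restricted to $(0,a)$. Then $J_k = Y_k/A$, the total mass is $T = V/A$ with $V := \sum_k Y_k$, and the event $\{\xiAlt(\atomspace) = t\}$ coincides with $\{V = At\}$. The atoms of $\bar\eta$ are $Y_k/V$, and given $(A = a, T = t)$ they are by construction the normalized ranked jumps of a subordinator with \Levy density $\rho|_{(0,a)}$ conditioned on total mass $at$, so
\begin{equation*}
\bar\eta \mid (A = a,\, T = t) \;\sim\; \PK\bigl(\rho|_{(0,a)} \,\big|\, T = at\bigr).
\end{equation*}
Since a Poisson process with intensity $\rho|_{(0,a)}$ has the same law as one with intensity $\rho$ conditioned to have no atom above $a$, the right-hand side equals $\PK(\rho \mid T = at,\, \Delta_1 < a)$. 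The hypothesis $t \in (0,1]$ plays its role here: given $T = at \leq a$ and $\rho$ of infinite total mass, one has $\Delta_1 < T \leq a$ almost surely, so the constraint $\Delta_1 < a$ is redundant. Removing it gives $\bar\eta \mid (A = a, T = t) \sim \PK(\rho \mid T = at)$, and integrating against the conditional law of $A$ given $T = t$ with the substitution $s = at$ yields
\begin{equation*}
\bar\eta \;=\; \int \PK(\rho \mid T = s)\,\gamma_t(ds) \;=\; \PK(\rho, \gamma_t),
\end{equation*}
where $\gamma_t$ is the conditional law of $V = At$ given $T = t$. This identifies $\bar\eta$ as a Poisson--Kingman measure.

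For the converse I would start from an arbitrary $\PK(\tilde\rho, \tilde\gamma)$, take $\rho := \tilde\rho$ and $t := 1$, and invert the formula above to solve for $\PAlt$. Bayes' rule applied to the joint density of $(A, T)$ gives $p(a \mid T = 1) \propto p(a)\, a\, F_a(a)$, where $F_a$ is the density of $V$ given $A = a$; matching with $\tilde\gamma$ then forces $\PAlt$ to have density proportional to $\tilde\gamma(a)/(a F_a(a))$. The main obstacle I anticipate is on this side, because $F_a$ is not generally explicit and one must verify that $\int \tilde\gamma(a)/(a F_a(a))\,da < \infty$ in order for $\PAlt$ to be a probability measure; in the stable case treated later in the paper, $F_a$ is available through the Dickman-type identity underpinning $\JOT$ and integrability reduces to a concrete moment condition on $\tilde\gamma$, while in full generality one will likely need to appeal to Kingman's density formula for $F_a$ in terms of $\rho$, or allow $t$ itself to vary as a free parameter, to secure integrability.
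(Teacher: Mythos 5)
Your forward direction is correct and reaches the same intermediate identity as the paper, namely ${\Law(\overline{\eta}\,|\,\DeltaAlt=a,\TAlt=t)=\PK(\rho|at)}$ followed by mixing over ${\Law(\DeltaAlt t|\TAlt=t)}$, but by a genuinely different route. The paper proves the key step (\cref{lemma:PK:at}) by writing out Pitman's EPPF formula for $\PK(\rho_a|t)$ and observing that the indicator $\indicator{su_k\leq 1}$ inside the integral is vacuous when ${t\leq 1}$; you instead use the restriction property of Poisson processes (independence of the atoms below and above level $a$) together with the observation that on ${\lbrace T\leq a\rbrace}$ an infinite-activity process a.s.\ has no atom above $a$, so the truncation at $a$ can be dropped. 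The two arguments isolate the same phenomenon, but yours avoids the EPPF machinery entirely and is arguably more transparent about \emph{why} ${t\leq 1}$ matters; the paper's computation buys the explicit EPPF along the way. As a consistency check, your $\gamma_t=\Law(V|T=t)$ reproduces the paper's $\PAlt_t$ in \cref{corollary:pk:basic} once one notes that the same restriction argument gives ${F_a(at)=e^{\Lambda(a)}f_{\mass{\Delta}}(at)}$ for ${t\leq 1}$.

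The converse is where the proposal falls short, and the obstacle you flag is real rather than merely technical. By fixing ${t=1}$ and prescribing only the marginal $\PAlt$ of $\DeltaAlt$, you are forced to recover the target mixing measure $\tilde\gamma$ as a Bayes posterior ${p(a|T=1)\propto p(a)\,aF_a(a)}$, and inverting this requires both ${\tilde\gamma\ll\text{Leb}}$ on the support of ${a\mapsto aF_a(a)}$ and the integrability ${\int\tilde\gamma(a)/(aF_a(a))\,da<\infty}$, neither of which holds for arbitrary Poisson--Kingman mixing measures (a point mass $\tilde\gamma$ already escapes your formula, though it happens to be reachable by taking $\PAlt$ itself a point mass). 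The paper sidesteps the inversion entirely: in \cref{corollary:pk:general} it works with a jointly reweighted surrogate pair ${(\DeltaAlt,\TAlt)}$ via the density $\omega(a,t)$ of \eqref{eq:PK:surrogate:density}, which amounts to \emph{directly prescribing} the conditional law ${\Law(\DeltaAlt|\TAlt=t)}$ rather than deriving it from a marginal; with ${\omega(a,t)=h(at)/(a\rho(a))}$ one gets ${\PAlt_t(dz)=h(z)f_{\mass{\Delta}}(z)dz}$ with no surjectivity question left to answer. To complete your argument in the same spirit you would either adopt that joint-surrogate formulation, or supply conditions on $\tilde\gamma$ (relative to $f_{\mass{\Delta}}$ and $\Lambda$) under which your candidate density ${\tilde\gamma(a)e^{-\Lambda(a)}/(af_{\mass{\Delta}}(a))}$ is integrable, which is not automatic from ${\int\tilde\gamma=1}$.
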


One implication is that conditioning on a total mass of at most 1 can drastically
simplify the properties of a random measure. This surprising fact was observed by 
\citet*{Arratia:Barbour:Tavare:1999:1} for the scale-invariant Poisson process, 
and is arguably implicit in the work of \citet{Perman:1993:1}.
Our result specializes to that of 
\citet*[][Theorem 3.1]{Arratia:Barbour:Tavare:1999:1} for 
${\rho(s)=cs^{-1}}$, although their proof does not seem to generalize,
and our result is obtained in a very different manner.
The theorem also shows the random measures $\overline{\xiAltAlt}$
in \eqref{eq:pk:mixed:rm} can be regarded as a refinement of Poisson-Kingman
measures, obtained by including the additional mixing variable $\DeltaAlt$.
A more detailed statement, including the distributions of the resulting measures,
is given in \cref{theorem:PK}.

\begin{remark}
We note two aspects of possible interest to applications:
%\begin{itemize}
(i) Given a random partition model with desirable properties, 
  \cref{theorem:pk:simplified} identifies the corresponding feature model; 
  \cref{sec:pk:examples} provides examples.
(ii) An explicit coupling between 
  between the matrix $\Z$ and an exchangeable partition can be constructed, by parametrizing both
  by a single random measure. 
  One can hence specify models for problems where part of the
  data constitutes a latent feature problem and another part a clustering problem. 
  Problems of this type are known in machine learning as multi-task learning problems, see \eg 
  \citep{lounici1050van,NIPS2008_3499,4523930}.
%\end{itemize}
\end{remark}

Finally, we consider the distribution of certain functions of $\Z$: The basic
statistics of interest in feature modeling problems are typically
the row sums (number of features of one object), column sums (number of objects exhibiting a 
feature), and the total number of features $K_n$ as defined in \eqref{eq:nk:Kn}.
Of particular interest in applications are models describing heavy-tailed phenomena,
where one or more of the above statistics exhibit a power law.

Column sums following a power law can be obtained relatively easily,
by choosing $\xi$ such that the weight sequence $(J_k)$ is heavy-tailed,
as is the case for the stable-beta process \eqref{eq:beta:stable:process}
and for several models in \cref{sec:stable}.
We do not consider power laws on the row sums within a single matrix,
since---as pointed out in \cite{Teh:Goerur:2009} and 
\cite{Broderick:Jordan:Pitman:2012:1}---this comes
at the price of sacrificing the representation
\eqref{eq:Bernoulli:sampling:basic}: 
%Rows are conditionally
%\iid given $\xi$, and 
The row sums ${\RS_1,\RS_2,\ldots}$ are conditionally \iid given $\xi$, and
\begin{equation}
  \label{eq:janson}
  \abstmeasure\bigl(|\RS_i - \xi(\atomspace)| > \varepsilon\,\big\vert\,\xi\bigr)
  \leq
  \exp\Bigl(-\frac{\varepsilon^2}{2\xi(\atomspace)-\varepsilon/3}\Bigr)
  \qquad\text{ for all }\varepsilon>0\;,
\end{equation}
by standard Bernoulli tail bounds \citep[see][]{Janson:Luczak:Rucinski:2000:1}.
Within a matrix $\Z$ generated by \eqref{eq:Bernoulli:sampling:basic}, the
distribution of row sums is hence not heavy-tailed.

In \cref{sec:BFRY}, we consider models which result in $K_n$ being marginally
heavy-tailed---that is, if ${\Z^{\ind{(1)}},\Z^{\ind{(1)}},\ldots}$ are 
\iid realizations of $\Z$, each with $n$ rows, 
${K_n(\Z^{\ind{(1)}}),K_n(\Z^{\ind{(1)}}),\ldots}$ empirically exhibit a power law.
That is not the case for the IBP, including the stable-beta case.
One way to achieve this behavior is to use a $\JOT(\rho,\PAlt)$ model and choose $\PAlt$
such that ${1/\DeltaAlt}$ is heavy-tailed. Additionally choosing $\rho$ as stable, for
example, yields a model with power law for the column sums and a marginal power law for 
$K_n$.

There is another, perhaps less obvious construction:
If $\rho$ is of the form \eqref{eq:stable:scaleinvariant:rho_a}, the conditional measure
${\xiAlt|(\DeltaAlt=a)}$ has \Levy density
\eqref{eq:stable:scaleinvariant:rho_a}. In the scale invariant case ${\alpha=0}$,
$\xi_a$ does not depend on $a$. In the stable case ${\alpha>0}$, it does, but 
$a$ only acts as a scaling factor on the \Levy density, and
one can hence equivalently sample the weights $(J_k)$ of $\xiAlt$ as
\begin{equation}
  \label{eq:BFRY:random:measure:intro}
  (J_k)\sim\text{Subordinator}(\zeta \rho)\;,
\end{equation}
where ${\zeta:=(\DeltaAlt)^{-\alpha}}$.
Although this representation coincides with the $\JOT$ representation in the 
scale-invariant and the stable
case, it clearly does not for general choice of $\rho$.
In \cref{sec:BFRY}, we show that there is a 
specific choice of the variable $\zeta$ for which
\eqref{eq:BFRY:random:measure:intro} nonetheless yields a simple urn scheme;
moreover, the distributions of $K_n$, for ${n\in\mathbb{N}}$, can be obtained explicitly.
For a stable-beta density, for example, we obtain the following:
Choose parameters ${\sigma\in(0,1)}$ and ${\alpha,\theta>0}$.
\begin{enumerate}
\item The first row contains ${\Poisson(\phi_1 \zeta)}$ 
  consecutive non-zero entries, where ${\zeta:=G/\beta}$, for
  ${G\sim\GammaDist(1-\sigma,1)}$ and ${\beta\sim\Beta(\sigma,1)}$.
\item
  In the $(n+1)$st row, each of the first $K_n$ entries is non-zero with 
  with probability ${\frac{n_k-\alpha}{\theta+n}}$.
  Additionally, append ${H_{n+1}|K_n\sim\Poisson(G_{n+1}F_{n+1})}$
  consecutive non-zero entries, where ${G_{n+1}\sim\GammaDist(1-\sigma+K_n)}$,
  and the random variable $F_{n+1}$ is a function of $K_n$, the parameters, and
  an independent uniform variable.
\end{enumerate}
The variables $\zeta$ and $F_n$, and the coefficients $\phi_k$, are trivial to evaluate;
all are described in detail in \cref{sec:BFRY}, where we also study
the distribution of $K_n$, and note certain
parallels between \eqref{eq:BFRY:random:measure} and the two-parameter Poisson-Dirichlet distribution.

Since the row sums are not generally Poisson if $\xi$ is not completely random, it stands to reason
to ask for a Poisson approximation. In \cref{sec:tv}, we consider a simple total variation bound
on the approximation error, and show how, as a consequence of \cref{theorem:pk:simplified}, 
the small weights of unitary random measure can be related to those of a random probability measure.
The latter generalizes a result of \citet*{Arratia:Barbour:Tavare:1999:1}.

\section{Conditioning on a set of rows}
\label{sec:construction}

Results in this section are largely auxiliary, but requisite for the ensuing
discussion. Suppose the first $n$ rows
${\Z_{1},\ldots,\Z_{n}}$ of a matrix $\Z$ are generated, using either
an urn scheme, or a hierarchical representation with random measure $\xi$. 
We are concerned
with two types of conditional distributions, the laws of the conditional
variables ${\xi|\Z_{1},\ldots,\Z_{n}}$ and
${\Z_{n+1,}|\Z_{1},\ldots,\Z_{n}}$.
In the model \eqref{eq:latent:feature:model},
$\Z$ and $\U$ are used to explain observational data $X$. In the following,
however, it can be useful to interpret the first $n$ rows of $\Z$ itself
as $n$ observations. In the terminology of Bayesian statistics,
${\Law(\xi|\Z_{1},\ldots,\Z_{n})}$ is then the posterior distribution
of $\xi$. Machine learning algorithms, which represent the matrix
$\Z$ as a ``latent'' variable, require a tractable representation
of this posterior. We use it in the following to compute 
the conditional distribution ${\Law(\Z_{n+1,}|\Z_{1},\ldots,\Z_{n})}$
of the ${(n+1)}$st row in an urn scheme.

\subsection{Bernoulli process}

Following \citet{Thibeaux:Jordan:2007}, we use the following joint
encoding for the variables $\Z$ and $\U$, which is particularly
useful for conditioning:
Given a (fixed) unitary measure ${m=\sum_k w_k\delta_{u_k}}$ on $\atomspace$, 
define a random measure
\begin{equation}
  \label{eq:def:BeP}
  \Pi(\argdot):=\sum_{k\in\mathbb{N}} Z_{k}\delta_{u_k}(\argdot)
  \qquad\text{ with } 
  Z_{k}\sim\Bernoulli(w_k) \text{ independently.}
\end{equation}
$\Pi$ is called a \kword{Bernoulli process} with parameter $m$
in \citep{Thibeaux:Jordan:2007}, and denoted $\BeP(m)$ below.
Let ${\mu=\sum_kW_k\delta_{U_k}}$ be a unitary random measure, and sample
\begin{equation}
  \label{eq:BeP:sampling}
  \Pi_1,\ldots,\Pi_n|\xi\simiid\BeP(\xi)\;.
\end{equation}
The non-vanishing binary weights of each random measure $\Pi_i$ can then be interpreted
as the non-zero entries of the $n$th row of a matrix $\Z$, whose distribution is
clearly equivalent to that in \eqref{eq:Bernoulli:sampling:basic}. Define
$\U$ using the atoms of $\mu$ as ${\U:=(U_1,U_2,\ldots)}$. Then 
$\Pi_i$ provides the values of precisely those atoms that correspond to non-zero
entries in row $i$. 

Since \eqref{eq:def:BeP} thins a point process,
it is not hard to believe---and indeed at times tacitly assumed in the literature---that
$\Pi$ should be Poisson random measure if $\mu$ is completely random.
That is indeed the case:
\begin{proposition}
  \label{result:BeP:Poisson}
  Let ${\mu=\sum_{k}J_k\delta_{U_k}}$ 
  be a unitary, homogeneous CRM with ${G:=\Law(U_1)}$.
  Then $\Pi$, defined by ${\Pi|\mu\sim\BP{\mu}}$, is a Poisson random measure with
  ${\mean{\Pi}=\mean{\mu(\atomspace)}G}$.
\end{proposition}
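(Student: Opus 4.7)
The plan is to recognize $\mu$ as the projection of a Poisson point process on the product space $\atomspace\times\mathbb{R}_+$, attach Bernoulli marks, apply Poisson thinning/marking, and project back. Since $\mu$ is a homogeneous CRM with \Levy density $\rho$ and atom law $G$, the pairs ${\lbrace(U_k,J_k)\rbrace}$ form a Poisson random measure on ${\atomspace\times\mathbb{R}_+}$ with intensity ${G(du)\rho(s)ds}$. The unitary assumption guarantees ${J_k\in[0,1]}$ almost surely, so the map ${(u,s)\mapsto s}$ is a valid retention probability on the support of the intensity.

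Conditionally on $\mu$, the weights ${J_k}$ determine the independent Bernoulli trials ${Z_k}$, and the retained points are exactly those pairs ${(U_k,J_k)}$ with ${Z_k=1}$. First I would invoke the marking theorem (equivalently, Poisson thinning with a random, point-dependent retention probability) to conclude that the retained pairs form a Poisson random measure on ${\atomspace\times\mathbb{R}_+}$ with intensity ${s\cdot G(du)\rho(s)ds}$. Then I would apply the mapping theorem to the projection ${(u,s)\mapsto u}$: its pushforward is a Poisson random measure on $\atomspace$ with intensity
\begin{equation*}
  \Bigl(\int_0^{\infty} s\rho(s)ds\Bigr)G(du)\;=\;\mean{\mu(\atomspace)}\,G(du)\;,
\end{equation*}
where the identity ${\mean{\mu(\atomspace)}=\int_0^\infty s\rho(s)ds}$ is Campbell's formula together with the fact that ${G(\atomspace)=1}$. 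The resulting projected measure is precisely $\Pi$, establishing both the Poisson character and the claimed mean.

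The only subtle point is justifying the marking step when the retention probabilities ${s}$ are the values of the second coordinate of the Poisson points themselves; this is standard (one can either quote Kingman's marking theorem directly, or verify the Laplace functional ${\mean{\exp(-\Pi(f))}=\exp\bigl(-\int(1-e^{-f(u)})\mean{\mu(\atomspace)}G(du)\bigr)}$ by first conditioning on $\mu$ to obtain ${\prod_k(1-J_k(1-e^{-f(U_k)}))}$ and then applying Campbell's formula to the Poisson measure ${\sum_k\delta_{(U_k,J_k)}}$). Finiteness of $\Pi$ follows from ${\mean{\Pi(\atomspace)\mid\mu}=\mu(\atomspace)<\infty}$ a.s. No additional ingredient beyond homogeneity and the unitary constraint is needed, and the argument makes transparent why the conclusion fails outside the homogeneous CRM class: the pairs ${(U_k,J_k)}$ would no longer be Poisson.
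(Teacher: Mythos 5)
Your proof is correct, and it takes a somewhat different route from the paper's. The paper works entirely with Laplace functionals: it conditions on the weights $(W_k)$ to get ${\mean{e^{-\Pi(g)}\mid (W_k)}=e^{-\sum_k h(W_k)}}$ with ${h(s)=-\log((1-s)+s\mean{e^{-g(U_1)}})}$, recognizes the exponent as ${N(h)}$ for the Poisson random measure ${N=\sum_k\delta_{W_k}}$ with intensity $\rho$, and evaluates ${\mean{e^{-N(h)}}}$ to verify the Poisson characterization ${\mean{e^{-\Pi(g)}}=e^{-(\mean{\Pi})(1-e^{-g})}}$. You instead lift to the marked point process ${\lbrace (U_k,J_k)\rbrace}$ on ${\atomspace\times\mathbb{R}_+}$ with intensity ${G(du)\rho(s)ds}$, thin with position-dependent retention probability ${(u,s)\mapsto s}$ (valid because unitarity puts the weights in $[0,1]$), and project; the mapping theorem then gives a Poisson process on $\atomspace$ with the finite intensity ${\bigl(\int_0^{\infty}s\rho(s)ds\bigr)G=\mean{\mu(\atomspace)}G}$. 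The two arguments are essentially dual to one another --- your parenthetical fallback of verifying the Laplace functional by conditioning on $\mu$ and applying Campbell's formula \emph{is} the paper's proof --- but your primary route is arguably more structural: it makes the role of each hypothesis visible (homogeneity gives the product-form intensity, unitarity makes $s$ a legitimate retention probability) and explains at a glance why the conclusion fails outside the homogeneous CRM class. The Laplace-functional computation, by contrast, is self-contained and avoids quoting the marking and mapping theorems. One small point you handle correctly but could state more explicitly: finiteness of the projected intensity, ${\int_0^{\infty}s\rho(s)ds<\infty}$, follows because the weights lie in $[0,1]$, so that ${\int\min(s,1)\rho(s)ds=\int s\rho(s)ds}$ and the almost-sure finiteness of ${\mu(\atomspace)}$ forces the integral to converge.
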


\subsection{Conditioning}
\label{sec:posterior}

Suppose we choose ${\mu=\xiAlt}$ in \eqref{eq:BeP:sampling} and draw a sample consisting
of $n$ random measures, ${\Pi_{1:n}:=(\Pi_1,\ldots,\Pi_n)}$.
To specify the distribution of ${\xiAlt|\Pi_{1:n}}$, we have to keep track of the
number of features in the sample, and of the prevalence of each feature.
Define $n_k$ and $K_n$ as in \eqref{eq:nk:Kn}.
We say that $U_k$ is \kword{observed} in the sample if ${n_k>0}$. 
Since ${\mass{\xiAlt}<\infty}$ almost surely, the total number $K_n$ of distinct
observed atoms is finite almost surely.
It is customary to denote the observed atoms as ${U_1^{\ast},\ldots,U_{K_n}^{\ast}}$.
Additionally conditioning on $\DeltaAlt$ reduces to the CRM case
\citep{Lijoi:Pruenster:2009:1}.

\begin{proposition}
  \label{result:posterior:xiAlt}
  The conditional random measure ${\xi_n^{\ast}:=\xiAlt|\Pi_{1:n}}$ is distributed as
  \begin{equation}
    \label{eq:posterior:rm}
          {\xi}_n^{\ast}
          \equdist
          \xiAlt_n
          +
          \sum_{k=1}^K
          J_{nk}^{\ast}\delta_{U_k^{\ast}}\;.
  \end{equation}
  Conditionally on $\DeltaAlt$, the unitary measure $\xiAlt_n$ is completely random, and
  ${J_{nk}\condind_{\DeltaAlt}\xiAlt_n}$.
  Let $f_{\DeltaAlt}$ denote the density of $\DeltaAlt$ on $\mathbb{R}_+$, and define
  \begin{equation}
    \label{eq:posterior:auxiliary:variables}
    c_a(n,n_k):=\int_0^a s^{n_k}(1-s)^{n-n_k}\rho(as)ds
    \quad\text{ and }\quad
    \psi_n(a):=a\int_0^1(1-(1-s)^n)\rho(as)ds\;.
  \end{equation}
  The conditional random measure ${\xiAlt_n|(\DeltaAlt=a)}$ is given by the \Levy density
  \begin{equation}
    \label{eq:posterior:density:conditional}
    \rho_n^a(s):=a(1-s)^n\rho(as)\indicator{s\leq 1}\;,
  \end{equation}
  and the jumps $J_{nk}^{\ast}$ in \eqref{eq:posterior:rm} by
  \begin{equation}
    \label{eq:posterior:jump:distribution}
    \mathbb{P}(J_{nk}^{\ast}\in ds|\DeltaAlt=a)=\frac{s^{n_k}(1-s)^{n-n_k}\rho(as)}{c_a(n,n_k)}ds\;.
  \end{equation}
  The conditional law of $\DeltaAlt$ is 
  \begin{equation}
    \label{eq:posterior:DeltaAlt}
    \abstmeasure(\DeltaAlt\in da|\Pi_{1:n})
    \propto
    f_{\DeltaAlt}(a)e^{-\psi_n(a)}\Bigl(\prod_{k=1}^{K_n}c_a(n,n_k)\Bigr)da\;.
  \end{equation}
\end{proposition}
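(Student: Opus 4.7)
The plan is to reduce the problem to the well-studied setting of CRMs under Bernoulli-process sampling by first conditioning on $\DeltaAlt$, applying the classical posterior decomposition there, and then recovering the law of $\DeltaAlt|\Pi_{1:n}$ via Bayes' theorem using the marginal likelihood. This is exactly the route hinted at in the paragraph preceding the statement (``Additionally conditioning on $\DeltaAlt$ reduces to the CRM case'').

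First, conditioning on $\DeltaAlt=a$, the measure $\xi_a$ is by \eqref{eq:xi:a}--\eqref{eq:rho:scaled} a homogeneous CRM with L\'evy density $\rho_a(s)=a\rho(as)\indicator{s\leq 1}$, so sampling $\Pi_1,\ldots,\Pi_n|\xi_a\simiid\BeP(\xi_a)$ falls into the classical Thibaux--Jordan / \citet{Lijoi:Pruenster:2009:1} regime and yields a decomposition of $\xi_a|\Pi_{1:n}$ into an ``unobserved features'' CRM $\xiAlt_n$ plus finitely many fixed atoms at the distinct observed locations $U_1^*,\ldots,U_{K_n}^*$. A jump of size $s$ survives unmasked by all $n$ Bernoulli thinnings with probability $(1-s)^n$, so the mapping/thinning argument underpinning \cref{result:BeP:Poisson} gives $\xiAlt_n$ the L\'evy density $(1-s)^n\rho_a(s)=\rho_n^a(s)$ of \eqref{eq:posterior:density:conditional}. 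For an atom observed with count $n_k$, a single-jump Bayes computation gives posterior jump density proportional to $s^{n_k}(1-s)^{n-n_k}\rho(as)$ with normalizer $c_a(n,n_k)$, matching \eqref{eq:posterior:jump:distribution}. Conditional independence of the $J_{nk}^*$ from $\xiAlt_n$ given $\DeltaAlt$ then follows from the independence of atoms in the underlying Poisson random measure.

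Second, to derive \eqref{eq:posterior:DeltaAlt} I would apply Bayes' theorem in $a$ using the marginal likelihood of $\Pi_{1:n}\,|\,\DeltaAlt=a$. By the same Poisson structure, this likelihood factorizes into an exponential factor $e^{-\psi_n(a)}$, coming from the total Poisson intensity on $\atomspace$ of atoms that get masked at least once --- precisely $\psi_n(a)=a\int_0^1(1-(1-s)^n)\rho(as)\,ds$ --- times the product $\prod_{k=1}^{K_n}c_a(n,n_k)$ collecting the jump-weight evidence from each observed feature. Multiplying by the prior density $f_{\DeltaAlt}(a)$ yields the stated proportionality.

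The main technical obstacle is verifying by a Palm/Slivnyak-style argument on the underlying marked Poisson random measure that the masking probabilities really do integrate out cleanly to $e^{-\psi_n(a)}$ for the never-observed atoms and to the product $\prod_{k}c_a(n,n_k)$ for the observed ones, with no stray combinatorial or labeling factors, and that this decomposition is preserved after mixing over $\DeltaAlt$. The rest of the proof is bookkeeping on top of standard CRM posterior theory.
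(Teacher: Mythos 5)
Your proposal is correct and follows essentially the same route as the paper: condition on $\DeltaAlt=a$ to reduce to the standard CRM posterior decomposition under Bernoulli-process sampling (the paper cites \citet{Kim:1999:1} for this step where you re-derive it via thinning), and then recover $\Law(\DeltaAlt|\Pi_{1:n})$ by Bayes' theorem from the marginal likelihood $e^{-\psi_n(a)}\prod_{k=1}^{K_n}c_a(n,n_k)G(du_k^{\ast})$. The Palm/Slivnyak bookkeeping you flag as the main obstacle is exactly what the cited CRM posterior results supply, so no further work is needed.
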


If the random measure $\xiAlt$ is marginalized out of the hierarchical model, 
we obtain the predictive distribution of ${\Pi_{n+1}|\Pi_{1:n}}$. 
\begin{proposition}
  \label{result:posterior:predictive}
  Let ${U_1^{\ast},\ldots,U_{K_n}^{\ast}}$ be observed atoms in $\Pi_{1:n}$.
  Under the predictive distribution $\Law(\Pi_{n+1}|\Pi_{1:n})$,
  each previously observed atom $U_k^{\ast}$ has non-zero weight with probability
  \begin{equation}
    \abstmeasure[\Z_{n+1,k}=1|\Pi_{1:n}]=\frac{c_a(n+1,n_k+1)}{c_a(n,n_k)}\;.
  \end{equation}
  Additionally, there are ${\Poisson(q_n)}$ previously unobserved
  atoms with non-zero weights, where 
  \begin{equation}
    q_n:=\int_0^1 s(1-s)^n\rho(s)ds\;,
  \end{equation}
  and the locations of the newly observed atoms are drawn \iid from $G$.
\end{proposition}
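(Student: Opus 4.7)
The plan is to condition on $\DeltaAlt=a$ throughout, use \cref{result:posterior:xiAlt} to split $\xiAlt_n^\ast$ into its old-atom and fresh completely random parts, and then apply Bernoulli process sampling to each piece independently. Conditional on $\DeltaAlt=a$, \cref{result:posterior:xiAlt} gives
\begin{equation*}
  \xiAlt_n^\ast \;\equdist\; \xiAlt_n + \sum_{k=1}^{K_n} J_{nk}^\ast \delta_{U_k^\ast},
\end{equation*}
with $\xiAlt_n$ a homogeneous CRM of \Levy density $\rho_n^a$ from \eqref{eq:posterior:density:conditional}, independent of the jumps $J_{nk}^\ast$ whose law is \eqref{eq:posterior:jump:distribution}. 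Because $\Pi_{n+1}\sim\BeP(\xiAlt_n^\ast)$ assigns weights independently across atoms, sampling the ${(n+1)}$st row factors into: (i) a Bernoulli draw at each observed atom $U_k^\ast$ with probability $J_{nk}^\ast$, and (ii) a Bernoulli process draw from the fresh part $\xiAlt_n$.

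For (i), by the tower property and \eqref{eq:posterior:jump:distribution},
\begin{equation*}
  \abstmeasure[\Z_{n+1,k}=1 \mid \Pi_{1:n},\DeltaAlt=a]
  \;=\; \mean{J_{nk}^\ast \mid \Pi_{1:n},\DeltaAlt=a}
  \;=\; \frac{\int_0^a s^{n_k+1}(1-s)^{n-n_k}\rho(as)\,ds}{c_a(n,n_k)}
  \;=\; \frac{c_a(n+1,n_k+1)}{c_a(n,n_k)},
\end{equation*}
which is the first claim (the unconditional version then follows by integrating against the posterior of $\DeltaAlt$ given in \eqref{eq:posterior:DeltaAlt}). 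For (ii), $\xiAlt_n$ is homogeneous and completely random given $\DeltaAlt=a$, with atom locations iid from $G$, so \cref{result:BeP:Poisson} directly implies that $\Pi_{n+1}$ restricted to the newly observed atoms is a Poisson random measure with mean measure proportional to $G$. In particular, the iid draws of new atom locations from $G$ are immediate from homogeneity.

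It remains to identify the intensity. Since $\Pi_{n+1}^{\text{new}}\mid \xiAlt_n \sim \BeP(\xiAlt_n)$ thins each jump of $\xiAlt_n$ independently with its own weight, the expected total mass of $\Pi_{n+1}^{\text{new}}$ is
\begin{equation*}
  \int_0^1 s\,\rho_n^a(s)\,ds
  \;=\; \int_0^1 s\, a(1-s)^n\rho(as)\,ds,
\end{equation*}
which after substituting $u=as$ yields $\int_0^a u(1-u/a)^n\rho(u)\,du/a$, and reduces to $q_n=\int_0^1 s(1-s)^n\rho(s)\,ds$ in the scale-invariant regime the statement targets; this gives the Poisson$(q_n)$ count of fresh atoms.

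The main obstacle is bookkeeping: one has to check that conditional on $\DeltaAlt$ the decomposition in \cref{result:posterior:xiAlt} really makes the old-atom Bernoulli draws and the fresh Poisson draws conditionally independent, so that one may apply \cref{result:BeP:Poisson} to the fresh part alone without entangling it with the $J_{nk}^\ast$'s. This is handled by the conditional independence assertion $J_{nk}^\ast \condind_{\DeltaAlt}\xiAlt_n$, together with the fact that Bernoulli thinning of a point process at atoms with disjoint support splits into independent processes—allowing the two contributions to $\Pi_{n+1}$ to be computed separately and then recombined.
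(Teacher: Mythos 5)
Your proof is correct and follows exactly the route the paper leaves implicit for this proposition: decompose the posterior via \cref{result:posterior:xiAlt}, compute $\mathbb{E}[J_{nk}^\ast\mid\Pi_{1:n},\DeltaAlt=a]$ for the previously observed atoms, and apply \cref{result:BeP:Poisson} to the fresh completely random component $\xiAlt_n$ to get the Poisson count of new atoms with locations \iid from $G$. You are also right to flag the wrinkle in the statement itself: the intensity you derive is $a\int_0^1 s(1-s)^n\rho(as)\,ds$, which matches the displayed $q_n$ only when $a\rho(as)=\rho(s)$ (the scale-invariant case) or when the $\rho$ in the definition of $q_n$ is read as the conditional density $\rho_a$ of \eqref{eq:rho:scaled}.
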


\subsection{Tilting and thinning}

The results above show conditioning a random measure with \Levy density $\rho$ on
Bernoulli process observations leads to \Levy densities 
${\rho(s)(1-s)^n}$. Terms of
the form ${(1-s)^t}$, for some scalar $t$, also arise in the \Levy densities of the beta 
stable beta process \eqref{eq:beta:stable:process}.
The following thinning lemma shows that such terms, and more
generally \Levy densities ${\rho(s)h(s)}$ ``tilted'' by some function $h$, 
can be regarded as the outcome of a conditional thinning operation:

\begin{lemma}
  \label{lemma:thinning}
  Let ${\mu=\sum_{k\in\mathbb{N}}J_{k}\delta_{U_{k}}}$ be a homogeneous CRM
  on $\atomspace$, with \Levy density $\rho$ on ${(0,\infty)}$, 
  and let ${h:(0,\infty)\rightarrow[0,1]}$ be measurable.
  If $(B_k)$ are conditionally independent binary variables with conditional law
  ${B_k|J_k\sim\Bernoulli(h(J_k))}$, then 
  \begin{equation}
    \mu_h:=\sum_{k\in\mathbb{N}}B_kJ_{k}\delta_{U_{k}}
  \end{equation}
  is again a homogeneous CRM, with \Levy density
  ${h\cdot\rho}$. If in particular ${\int_{0}^{\infty}h(s)\rho(s)ds=\infty}$, then 
  ${\mu_h(\atomspace)=\infty}$ almost surely.
\end{lemma}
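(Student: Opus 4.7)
The plan is to view $\mu$ through the standard Poisson point process representation of a homogeneous CRM and then apply Kingman's marking and restriction theorems. Since $\mu = \sum_k J_k \delta_{U_k}$ is a homogeneous CRM on $\atomspace$ with \Levy density $\rho$ and iid atom locations drawn from some probability measure $G$, the collection of (weight, location) pairs forms a Poisson random measure
\begin{equation}
  N := \sum_{k\in\mathbb{N}} \delta_{(J_k, U_k)}
\end{equation}
on $(0,\infty) \times \atomspace$ with intensity $\rho(s)\,ds \otimes G(du)$, and $\mu$ is recovered as the pushforward $\mu(\argdot) = \int s\, N(ds, \argdot)$.

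The first main step is to enrich $N$ with independent Bernoulli marks: attach to each atom $(J_k, U_k)$ an independent variable $B_k$ with conditional law $\Bernoulli(h(J_k))$. By Kingman's marking theorem, the enriched point process $\sum_k \delta_{(J_k, U_k, B_k)}$ is Poisson on $(0,\infty) \times \atomspace \times \{0,1\}$ with intensity
\begin{equation}
  \rho(s)\bigl(h(s)\delta_1(db) + (1-h(s))\delta_0(db)\bigr)\,ds\otimes G(du).
\end{equation}
The second main step is the restriction theorem: the sub-process consisting of points with mark $b = 1$ is itself a Poisson random measure on $(0,\infty) \times \atomspace$, now with intensity $h(s)\rho(s)\,ds\otimes G(du)$. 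Reading the CRM--Poisson correspondence in reverse identifies $\mu_h = \sum_{k:B_k=1} J_k \delta_{U_k}$ as a homogeneous CRM on $\atomspace$ with \Levy density $h\cdot\rho$ and iid atoms drawn from $G$, establishing the main claim.

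For the final assertion, if $\int h(s)\rho(s)\,ds = \infty$ then the restricted Poisson process has infinite total intensity, hence it has infinitely many atoms almost surely, which gives $\mu_h(\atomspace) = \infty$ a.s.\ as claimed. I do not expect any substantive obstacle; the argument is essentially bookkeeping on top of Kingman's marking and restriction theorems for Poisson point processes. The only subtlety worth noting is that the Bernoulli mark parameter depends on the carrier coordinate $J_k$, but this state-dependent form of marking is covered by the general marking theorem, so this is not a real difficulty.
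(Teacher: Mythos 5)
Your proof of the main claim is correct and is essentially the paper's own argument: both rest on viewing $\sum_k\delta_{(J_k,U_k,B_k)}$ as a marked Poisson process. The paper carries out your ``restriction'' step analytically, by writing the Laplace exponent of $\mu_h$ as
$\psi(f)=\sum_{a=0}^{1}\int_{\atomspace}\int_0^\infty(1-e^{-af(u)s})\,h^a(s)(1-h(s))^{1-a}\rho(s)\,ds\,G(du)$
and observing that the $a=0$ term vanishes, which is exactly your marking-plus-restriction argument in Laplace-functional form. Your remark that state-dependent marking is covered by the general marking theorem is the right point to flag, and it is handled identically in the paper.

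The one genuine gap is in your justification of the final sentence. From $\int_0^\infty h(s)\rho(s)\,ds=\infty$ you correctly conclude that the thinned Poisson process has infinitely many points a.s., but the inference ``infinitely many atoms, hence $\mu_h(\atomspace)=\infty$'' is a non sequitur: a subordinator can have infinitely many jumps and finite total mass, and this is in fact the generic situation in this paper (e.g.\ the beta process, for which $\int\rho=\infty$ while $\int s\,\rho(s)\,ds<\infty$). What governs $\mu_h(\atomspace)=\sum_k B_kJ_k$ is Campbell's criterion together with the $0$--$1$ law: the total mass is a.s.\ finite or a.s.\ infinite according as $\int_0^\infty(1\wedge s)\,h(s)\rho(s)\,ds$ is finite or infinite. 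Since $h\le 1$, this integral is dominated by $\int(1\wedge s)\rho(s)\,ds$, so in particular $\mu_h(\atomspace)\le\mu(\atomspace)$ is finite whenever $\mu$ has finite total mass, no matter how $\int h\rho$ behaves. So either the hypothesis must be read as (or strengthened to) $\int(1\wedge s)h(s)\rho(s)\,ds=\infty$, or the conclusion should be that $\mu_h$ has infinitely many atoms. The paper's own proof is silent on this clause, so you are not alone, but the step as you wrote it does not hold.
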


If a procedure for sampling jumps from $\rho$---such as a stick-breaking representation---is 
available, one can hence sample ${\rho(s)(1-s)^n}$ using 
${B_k\sim\Bernoulli((1-J_k)^n)}$. In the $\JOT(\rho,\PAlt)$ case,
this is applicable to ${\xiAlt_n|(\DeltaAlt=a)}$.

\section{The scale-invariant and the stable case}
\label{sec:stable}

We have already noted that both the basic IBP and the CRP are inherently related to 
the subordinator with \Levy density ${\rho(s)=\theta s^{-1}}$ for some ${\theta>0}$.
In this section, we consider more generally the case
\begin{equation}
  \label{eq:sec:stable:density}
  \rho(s)=c s^{-1-\alpha}
  \qquad\text{ for }
  \alpha,c>0\;,
\end{equation}
\ie ${\Delta_1,\Delta_2,\ldots}$ are the jumps of a scale-invariant Poisson process
(if ${\alpha=0}$), or of a stable subordinator (if ${\alpha>0}$).

\subsection{Dickman distributions}
\label{sec:Dickman}

The scale-invariant Poisson process derives its name from the fact that
a Poisson process ${\lbrace X_1,X_2,\ldots\rbrace}$ on $\mathbb{R}_+$ 
satisfies the scale-invariance
\begin{equation}
  \lbrace X_1,X_2,\ldots\rbrace \equdist \lbrace bX_1,bX_2,\ldots\rbrace \equdist \lbrace 1/X_1,1/X_2,\ldots\rbrace
\qquad\text{ for every } b>0
\end{equation}
if and only if ${\rho=c s^{-1}}$ for some ${c>0}$. The scale invariance gives rise to 
unique properties; see \citep[][Chapter 12.8]{Daley:Vere-Jones:2008} or
\citep{Arratia:1998:1} for more details.

Let ${T:=\sum_k X_k}$ be the sum of locations of a Poisson process 
${\lbrace X_1,X_2,\ldots\rbrace}$ on $\mathbb{R}_+$ with an arbitrary \Levy density 
$\rho$. Under mild regularity conditions, $T$ has a probability density $g$; if so, this density satisfies
the integral equation
\begin{equation}
  tg(t)=\int_0^{\infty}g(t-s)s\rho(s)ds\;,
\end{equation}
see \eg \citep{Pitman:2003:1}. 
Now consider the scale-invariant Poisson process. If this process is defined
on all of $\mathbb{R}_+$, its total mass is infinite with positive probability. 
We hence restrict the process to the unit interval, which ensures ${T<\infty}$
almost surely.
Then ${\rho(s)=\frac{c}{s}\indicator{s\leq 1}}$, and 
\begin{equation}
  \label{eq:integral:equation:dickman}
  tg(t)
  =
  c \int_0^{1}g(t-s)ds
  =c\bigl(G(t)-G(t-1)\bigr)\;,
\end{equation}
where $G$ denotes the cdf of $g$.
Differentiating on both sides yields
\begin{equation}
  \label{eq:differential:equation}
  t\frac{dg}{dt}(t)=(c-1)g(t)-c g(t-1)\;.
\end{equation}
Continuous solutions $g$ to \eqref{eq:differential:equation} exist and are uniquely determined
up to scaling. For ${c=1}$, \eqref{eq:differential:equation} is called 
\kword{Dickman's equation}, and the solution $g$ uniquely determined by
${g(1)=1}$ is the \kword{Dickman function} 
\citep[e.g.][]{Arratia:Barbour:Tavare:2003,Penrose:Wade:2004:1}.
We are interested only in continuous solutions which are probability densities. These
are uniquely determined for each ${c>0}$. For each $c$, an $\mathbb{R}_+$-valued
random variable $D_{c}$ with density $g$ is called a \kword{generalized Dickman} variable.
Thus, the total mass $\mass{}$ of a subordinator with \Levy density
${\rho(s)=c s^{-1}}$ has law $\LAW(D_{c})$.

%% \begin{remark}[Conditioning on ${\mass{\xi}\leq 1}$]
%%   Suppose the jumps of a scale-invariant Poisson process on $\mathbb{R}_+$ are used to define 
%%   a homogeneous random measure $\mu$. We know that, if the jumps were restricted to 
%%   $[0,1]$, then ${\mass{\mu}\sim\GD(\theta)}$. For jumps in $\mathbb{R}_+$, 
%%   equation \eqref{eq:integral:equation:dickman} has to be solved for an integral over the entire
%%   half-line, in which case the density $g$ becomes complicated.
%%   However, if we condition $\mu$ on ${\mass{\mu}\leq 1}$, then 
%%   \eqref{eq:integral:equation:dickman} is still applicable. A more general form of this
%%   phenomenon---that certain properties of a homogeneous completely random measure $\mu$ simplify
%%   considerably given that ${\mass{\mu}\leq 1}$---will play a key role in \cref{sec:PK}.
%% \end{remark}

\subsection{Stick-breaking}

For the one-parameter IBP, and hence for the scale-invariant case ${\alpha=0}$, a stick-breaking
representation was proposed by \citet*{Teh:Goerur:Ghahramani:2007}. It is recovered from
\eqref{eq:xi:sb} by choosing ${R_k\sim\Beta(c,1)}$.

\begin{remark}
  \label{remark:sb:TGG}
  Like the model itself, the representation in \citep{Teh:Goerur:Ghahramani:2007}
  can be understood as an outcome of scaling a subordinator:
  If a point process is scale-invariant, its image under a logarithm is translation-invariant.
  If ${\lbrace X_1,X_2,\ldots\rbrace}$ is scale-invariant Poisson with parameter $c$, the process 
  ${\lbrace -\log X_1,-\log X_2,\ldots\rbrace}$ is again Poisson, now with constant rate 
  $c$, so its
  arrival times are distributed as  ${-\log X_k\equdist E_1+\ldots+E_k}$ for independent 
  ${\text{Exponential}(1/c)}$ variables $E_k$
  \citep[see][]{Arratia:1998:1}. If $U$ denotes a $\Uniform[0,1]$ variable, 
  ${\Law(e^{-E_k})=\Law(U^{1/c})=\Beta(c,1)}$.
\end{remark}

Now consider a general ${\JOT(\rho,\PAlt)}$ model.
In the stable case, \cref{theorem:sb} shows that the coefficients 
$\Ma_k$ in \eqref{eq:xi:sb:explicit} are
\begin{equation}
  \Ma_k=\bigl(\frac{\alpha}{c}(E_1+\ldots+E_{k-1})+a^{-\alpha}\bigr)^{-1/\alpha}\;. 
\end{equation}
Another specific example is the beta process density
${\rho(s)=\alpha s^{-1-\alpha}(1-s)^{\alpha-1}}$, for which
\begin{equation}
  \label{eq:theorem:sb:mod}
  \Ma_k=\bigl(({\textstyle \bigl(\frac{1-a}{a}}\bigr)^{\alpha}+E_1+\ldots+E_{k-1})^{1/\alpha}+1\bigr)^{-1}\;.
\end{equation}
The distribution of the variables $R_k$ now depends on the choice of $\PAlt$.
It is interesting to note that the law $\Beta(c,1)$ in the IBP case above precisely
matches that occurring in the stick-breaking construction of the one-parameter
Poisson-Dirichlet process. 
The corresponding laws in the two-parameter Poisson-Dirichlet are
${R_k\sim\Beta(\theta+\alpha k,1)}$. 
Note that, if ${\alpha=0}$, the one-parameter case above is obtained for ${\theta=c}$.
Thus, $\theta$ and $c$ play the same role when comparing the one- and two-parameter
Poisson-Dirichlet to each other, whereas in the three-parameter models considered
here, they are distinct. In a stable $\JOT$ model, jumps of the form
${\Beta(\theta+\alpha k,1)}$ are obtained for a specific choice of $\PAlt$:
\begin{corollary}
  If ${\xiAlt\sim\JOT(\rho(s)=c s^{-1-\alpha},\PAlt)}$ 
  for ${\alpha>0}$, and
  ${\PAlt:=\GammaDist(\frac{\theta+\alpha}{\alpha},\frac{c}{\alpha})}$ 
  with ${\theta>-\alpha}$, each coefficient $R_k$ has law 
  ${\Beta(\theta+\alpha k,1)}$.
\end{corollary}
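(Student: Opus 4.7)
My plan is to make Theorem~\ref{theorem:sb} fully explicit in the stable setting and then exploit the conjugacy between Gamma and Exponential distributions to extract the law of each ratio $R_k$. Concretely, for $\rho(s)=cs^{-1-\alpha}$ I compute $\Lambda(s)=(c/\alpha)s^{-\alpha}$ and $\Lambda^{-}(t)=(\alpha t/c)^{-1/\alpha}$, so Theorem~\ref{theorem:sb} gives, conditionally on $\DeltaAlt=a$,
\[
\Ma_k^{-\alpha}
\;=\;
a^{-\alpha}+\tfrac{\alpha}{c}(E_2+\cdots+E_k),
\qquad k\geq 2,
\]
for \iid $E_i\sim\mathrm{Exp}(1)$. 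Writing $\Delta_1:=\DeltaAlt$ for brevity, this says that on the $s\mapsto s^{-\alpha}$ scale the sequence $(\Delta_k^{-\alpha})_{k\geq 1}$ is a random walk started at $\DeltaAlt^{-\alpha}$ with \iid $\mathrm{Exp}(c/\alpha)$ increments, all independent of $\DeltaAlt$.

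Next, I transfer the prior on $\DeltaAlt$ to the natural variable $\DeltaAlt^{-\alpha}$ by a Jacobian calculation: the stated $\GammaDist\bigl(\frac{\theta+\alpha}{\alpha},\frac{c}{\alpha}\bigr)$ prescription is exactly the law that makes $\DeltaAlt^{-\alpha}\sim\GammaDist\bigl(\frac{\theta+\alpha}{\alpha},\frac{c}{\alpha}\bigr)$, read as shape/rate. Combined with the Exponential increments above, a one-line induction yields $\Delta_k^{-\alpha}\sim\GammaDist\bigl(\frac{\theta+k\alpha}{\alpha},\frac{c}{\alpha}\bigr)$ for every $k\geq 1$. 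Writing
\[
R_k^{\alpha}
\;=\;\frac{\Delta_k^{-\alpha}}{\Delta_{k+1}^{-\alpha}}
\;=\;\frac{X}{X+Y},
\]
with $X:=\Delta_k^{-\alpha}$ Gamma and $Y:=\Delta_{k+1}^{-\alpha}-\Delta_k^{-\alpha}$ Exponential, both of rate $c/\alpha$ and independent, the standard Gamma-Exponential ratio identity gives $R_k^{\alpha}\sim\Beta\bigl(\frac{\theta+k\alpha}{\alpha},1\bigr)$. The elementary change of variables $B\mapsto B^{1/\alpha}$ sends $\Beta(\beta,1)$ to $\Beta(\alpha\beta,1)$, so $R_k\sim\Beta(\theta+k\alpha,1)$, as claimed.

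The only delicate point is the reparameterization in the second step: one has to verify that the $\GammaDist$ parameters quoted in the corollary really are those that produce the matching shape/rate Gamma law on $\DeltaAlt^{-\alpha}$ under the $s\mapsto s^{-\alpha}$ pushforward. Once this bookkeeping is settled the rest is mechanical, being a direct application of Theorem~\ref{theorem:sb} together with standard Gamma/Beta/Exponential identities; no deeper machinery is needed.
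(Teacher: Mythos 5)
Your argument is correct and is exactly the route the paper intends: the corollary is meant as an immediate consequence of \cref{theorem:sb} specialized to ${\Lambda(s)=\tfrac{c}{\alpha}s^{-\alpha}}$, which makes ${(\Delta_k^{-\alpha})_{k\geq 1}}$ a Gamma-started random walk with independent ${\text{Exponential}(c/\alpha)}$ increments, followed by the standard Beta--Gamma algebra and the power map sending ${\Beta(\beta,1)}$ to ${\Beta(\alpha\beta,1)}$. The caveat you flag is genuine and worth making explicit: the conclusion holds only when ${\GammaDist\bigl(\frac{\theta+\alpha}{\alpha},\frac{c}{\alpha}\bigr)}$ is read as the (shape, rate) law of ${\zeta=(\DeltaAlt)^{-\alpha}}$ --- consistent with the paper's stated convention of expressing results for $\DeltaAlt$ through $\zeta$ --- since a literal Gamma law on $\DeltaAlt$ itself would not push forward to a Gamma law under ${s\mapsto s^{-\alpha}}$, and the induction on the shapes ${\frac{\theta+k\alpha}{\alpha}}$ would then fail.
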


\subsection{Conditional distributions}
If $\rho$ is scale-invariant, \cref{result:posterior:xiAlt} shows 
the random measure component $\xiAlt_n$ of ${\xiAlt|\Pi_{1:n}}$,
additionally conditioned on ${\DeltaAlt=a}$, has \Levy density
\begin{equation}
  \rho_{n,a}(s)=c s^{-1}(1-s)^n\;,
\end{equation}
and is hence a beta process. In this sense, the family of beta processes
\eqref{eq:beta:process} derives
from the scale-invariant Poisson process, by
taking the ``closure under sampling''.

Now consider the stable case. 
Again by \cref{result:posterior:xiAlt}, 
$\xiAlt_n$ is given by the
\Levy measure
\begin{equation}
  \rho_{n,a}(s)=ca^{-\alpha}s^{-1-\alpha}(1-s)^n\;.
\end{equation}
To obtain the full law of ${\xiAlt|\Pi_{1:n}}$
additionally requires the law of ${\DeltaAlt|\Pi_{1:n}}$.
To simplify expressions, we consider only the special case ${c=\alpha}$, though this restriction
is not vital. The (surrogate) largest jump ${\DeltaAlt\sim\PAlt}$ repeatedly appears
below raised to a negative power, and it is convenient to express results for $\DeltaAlt$
in terms of an equivalent random variable $\zeta$.
\begin{corollary}
  \label{result:posterior:stable:alpha:alpha}
  Let ${\rho(s)=\alpha s^{-1-\alpha}}$ and ${\xiAlt\sim\JOT(\rho,\PAlt)}$, and define
  ${\zeta:=(\DeltaAlt)^{-\alpha}}$. Then
  \begin{equation}
    \zeta|\Pi_{1:n}\equdist(a^{\alpha}\psi_n(a)+1)Y \qquad\text{ where } Y\sim\GammaDist(K_n+1,1)\;,
  \end{equation}
  and ${\psi_n(a)}$ is defined as in \eqref{eq:posterior:auxiliary:variables}.
\end{corollary}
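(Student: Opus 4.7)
The plan is to specialize the posterior formula \eqref{eq:posterior:DeltaAlt} in \cref{result:posterior:xiAlt} to the stable density $\rho(s) = \alpha s^{-1-\alpha}$, and then change variables from $\DeltaAlt$ to $\zeta = \DeltaAlt^{-\alpha}$ to read off the Gamma form. The crucial feature of the stable case is that both $a$-dependent quantities in \eqref{eq:posterior:auxiliary:variables} factor cleanly as $\alpha a^{-\alpha}$ times an $a$-independent constant. Writing $I_n := \int_0^1 s^{-1-\alpha}(1-(1-s)^n)\,ds$, which is finite because $1-(1-s)^n\sim ns$ near $0$, a direct computation gives $\psi_n(a) = \alpha a^{-\alpha} I_n$, and the Beta integral defining $c_a(n,n_k)$ factors analogously as $\alpha a^{-\alpha}$ times $B(n_k - \alpha, n - n_k + 1)$. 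In particular $a^{\alpha}\psi_n(a) = \alpha I_n$ is a constant in $a$, which explains why $a$ drops out of the right-hand side of the target identity.

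First I would plug these expressions into \eqref{eq:posterior:DeltaAlt}, absorb the $a$-independent Beta products and numerical factors into the normalization, and obtain
$\abstmeasure(\DeltaAlt\in da\mid\Pi_{1:n}) \propto f_{\DeltaAlt}(a)\, a^{-\alpha K_n}\,\exp\bigl(-\alpha I_n\, a^{-\alpha}\bigr)\,da$.
Next I would apply the change of variable $\zeta=a^{-\alpha}$: the Jacobian transforms the prior $f_{\DeltaAlt}(a)\,da$ into the prior $f_\zeta(\zeta)\,d\zeta$ for $\zeta$, while the other two factors become $\zeta^{K_n}$ and $e^{-\alpha I_n \zeta}$. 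This yields
$\abstmeasure(\zeta\in d\zeta\mid\Pi_{1:n}) \propto f_\zeta(\zeta)\,\zeta^{K_n}\,e^{-\alpha I_n \zeta}\,d\zeta$.

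To conclude, I would identify the prior density $f_\zeta$. Under the $\JOT(\rho)$ choice $\DeltaAlt = \Delta_1$, Kingman's formula \eqref{eq:density:largest:jump} with $\Lambda(s) = \int_s^{\infty}\alpha u^{-1-\alpha}du = s^{-\alpha}$ gives $f_{\Delta_1}(s) = \alpha s^{-1-\alpha}e^{-s^{-\alpha}}$; the same substitution $\zeta = s^{-\alpha}$ yields $f_\zeta(\zeta)=e^{-\zeta}$, so $\zeta\sim\mathrm{Exponential}(1)$ a priori. Combining, the posterior density is proportional to $\zeta^{K_n}\exp(-\zeta(1+\alpha I_n))$, which is the $\GammaDist(K_n+1,(1+\alpha I_n)^{-1})$ density; equivalently, it is the law of $(1+\alpha I_n)Y$ for $Y\sim\GammaDist(K_n+1,1)$. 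Substituting $\alpha I_n = a^\alpha \psi_n(a)$ gives the stated identity. The main obstacle is bookkeeping rather than mathematical depth: one must verify that every $a$-dependence in $\prod_k c_a(n,n_k)$ is captured by the single factor $a^{-\alpha K_n}$, and that the Jacobian of the $\zeta$-transformation combines with $f_{\DeltaAlt}$ to give the exponential prior $f_\zeta(\zeta)=e^{-\zeta}$.
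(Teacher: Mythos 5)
Your route---specializing \eqref{eq:posterior:DeltaAlt} to $\rho(s)=\alpha s^{-1-\alpha}$, pulling the common power of $a$ out of $\psi_n(a)$ and $\prod_k c_a(n,n_k)$, and changing variables to $\zeta=a^{-\alpha}$---is exactly the intended derivation; the paper gives no separate proof of this corollary, leaving it to follow from \cref{result:posterior:xiAlt}, and your identification of the prior $f_\zeta(\zeta)=e^{-\zeta}$ from $\Lambda(s)=s^{-\alpha}$ is correct. Two bookkeeping points deserve care. First, as printed, $c_a(n,n_k)=\int_0^a s^{n_k}(1-s)^{n-n_k}\rho(as)\,ds$ scales as $a^{-1-\alpha}$, not $a^{-\alpha}$; your factorization implicitly inserts the Jacobian factor $a$ (i.e.\ works with $\rho_a(s)=a\rho(as)$ on $[0,1]$), which is evidently what is intended, since otherwise $\prod_k c_a$ would contribute $\zeta^{K_n(1+1/\alpha)}$ rather than $\zeta^{K_n}$ and the shape parameter would not be $K_n+1$. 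Second, the corollary is stated for arbitrary $\PAlt$, but the Gamma conclusion needs the exponential prior on $\zeta$, i.e.\ $\DeltaAlt=\Delta_1$; you correctly restrict to that case, and for general $\PAlt$ the posterior is only $\propto f_\zeta(\zeta)\,\zeta^{K_n}e^{-\alpha I_n\zeta}$.

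The genuine error is in your final sentence. You correctly arrive at the posterior density $\propto\zeta^{K_n}\exp\bigl(-\zeta(1+\alpha I_n)\bigr)$, which is a Gamma law with shape $K_n+1$ and \emph{rate} $1+\alpha I_n$; this is the law of $Y/(1+\alpha I_n)$ for $Y\sim\GammaDist(K_n+1,1)$, not of $(1+\alpha I_n)Y$. Indeed, if $Y$ has density $\propto y^{K_n}e^{-y}$ then $cY$ has density $\propto z^{K_n}e^{-z/c}$, so matching $e^{-z(1+\alpha I_n)}$ forces $c=(1+\alpha I_n)^{-1}$. Your derived density therefore contradicts your stated conclusion. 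The conclusion you wrote matches the corollary as printed, so the inversion appears to originate in the statement itself; a sanity check confirms your density is the right one: given $\zeta$, the observed atoms form a Poisson process of total intensity $\psi_n(a)=\alpha I_n\zeta$, so ${K_n\mid\zeta\sim\Poisson(\alpha I_n\zeta)}$, and since ${I_n\to\infty}$ the posterior must concentrate, with mean $(K_n+1)/(1+\alpha I_n)\to\zeta$, whereas $(1+\alpha I_n)Y$ has mean $(1+\alpha I_n)(K_n+1)\to\infty$. The proof should therefore end with $\zeta\mid\Pi_{1:n}\equdist Y/(a^\alpha\psi_n(a)+1)$, and either you should flag the discrepancy with the printed statement or the statement should be corrected accordingly.
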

We can then marginalize out $\xi$ and determine the predictive distribution
\citep[see also][]{Teh:Goerur:2009}:
\begin{corollary}
  Choose ${\rho(s)=\alpha s^{-1-\alpha}}$,
  and suppose samples are generated from a Bernoulli
  process $\BeP(\xiAlt)$ parametrized by a random measure ${\xiAlt\sim\JOT(\rho,\PAlt)}$. 
  Define a random variable $\zeta_n$ by ${\zeta_n^{-1/\alpha}\sim\Law(\DeltaAlt|\Pi_{1:n})}$.
  Then
  \begin{equation}
    \Pi_{n+1}|\Pi_1,\ldots,\Pi_n
    \equdist
    \sum_{j=1}^{N(C_n)}\delta_{U_j}+\sum_{k=1}^{K_n}\Z_{n+1,k}\delta_{U_k}\;,
  \end{equation}
  where $N(C_n)$ is a $\Poisson(C_n)$ variable, and
  the variables $C_n$ and $\Z_{n+1,k}$ are conditionally independent given $\DeltaAlt$.
  Their laws are
  \begin{equation*}
    C_n
    \equdist
    \zeta_n\alpha B(1-\alpha,n+1)
    \quad\text{and}\quad
    \Z_{n+1,k}\sim\Bernoulli(J_k)
    \quad\text{with}\quad
    J_k\sim\Beta(n_k-\alpha,n-n_k+1)\;,
  \end{equation*}
  where $B$ is the beta function.
  The initial sample is marginally distributed as ${\Pi_1\equdist\sum_{j=1}^{N(C_0)}\delta_{U_k}}$,
  with ${N(C_0)\sim\Poisson(\frac{\zeta\alpha}{1-\alpha})}$.
\end{corollary}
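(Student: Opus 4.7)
The plan is to combine \cref{result:posterior:predictive} (the predictive of a Bernoulli-process sample) with \cref{result:posterior:stable:alpha:alpha} (the posterior of $\zeta=\DeltaAlt^{-\alpha}$), specialising throughout to ${\rho(s)=\alpha s^{-1-\alpha}}$. First I would condition on ${\DeltaAlt=a}$. By \cref{result:posterior:xiAlt}, ${\xi_n^{\ast}=\xiAlt\,|\,\Pi_{1:n}}$ splits as ${\xiAlt_n+\sum_{k=1}^{K_n}J_{nk}^{\ast}\delta_{U_k^{\ast}}}$, where the residual CRM $\xiAlt_n$ has \Levy density ${\rho_{n,a}(s)=a^{-\alpha}\alpha s^{-1-\alpha}(1-s)^n}$ already recorded in \cref{sec:stable}. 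Sampling ${\Pi_{n+1}\mid\xi_n^{\ast}\sim\BeP(\xi_n^{\ast})}$ then separates naturally into Bernoulli draws at each observed atom $U_k^{\ast}$ and an independent ${\BeP(\xiAlt_n)}$ contribution supplying the new atoms.

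For the observed atoms, substituting ${\rho(s)=\alpha s^{-1-\alpha}}$ into the jump density \eqref{eq:posterior:jump:distribution} gives a density proportional to ${s^{n_k-\alpha-1}(1-s)^{n-n_k}}$, with the $a$-dependence cancelling between numerator and normaliser; this is exactly the $\Beta(n_k-\alpha,n-n_k+1)$ density. Independence of $J_{nk}^{\ast}$ from $\DeltaAlt$ therefore follows immediately, establishing the second factor of the claim. For the new atoms, since ${\xiAlt_n\mid\DeltaAlt=a}$ is a homogeneous CRM, \cref{result:BeP:Poisson} delivers a Poisson random measure with mean ${\mathbb{E}[\xiAlt_n(\atomspace)\mid\DeltaAlt=a]\,G}$, and a direct integration
\begin{equation*}
\int_0^1 s\,\rho_{n,a}(s)\,ds=a^{-\alpha}\alpha\int_0^1 s^{-\alpha}(1-s)^n\,ds=a^{-\alpha}\alpha\,B(1-\alpha,n+1)
\end{equation*}
identifies the rate. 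Marginalising $\DeltaAlt$ via \cref{result:posterior:stable:alpha:alpha} then replaces $a^{-\alpha}$ by the random variable $\zeta_n$, turning the count of new atoms into ${N(C_n)}$ with ${C_n=\zeta_n\alpha\,B(1-\alpha,n+1)}$. The initial-sample assertion is simply the ${n=0}$ specialisation: with no observed atoms, the posterior of $\zeta$ collapses to its prior, and ${B(1-\alpha,1)=1/(1-\alpha)}$.

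The main obstacle is bookkeeping rather than a substantive difficulty: one must keep careful track of which quantities depend on $\DeltaAlt$ and which do not. The key observation is that the Beta density is free of $a$ (so the Bernoulli draws are automatically independent of $\DeltaAlt$) whereas the Poisson rate depends on $a$ only through the factor $a^{-\alpha}$; these two facts together yield the claimed conditional independence of $C_n$ and the ${\Z_{n+1,k}}$ given $\DeltaAlt$. A final sanity check, that ${c_a(n,n_k)}$ is finite and the Beta identification is legitimate, reduces to the inequality ${n_k-\alpha>0}$, which holds because ${n_k\ge 1}$ for any observed atom and ${\alpha\in(0,1)}$.
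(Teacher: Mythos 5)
Your proposal is correct and follows essentially the route the paper intends: the corollary is stated as a direct specialization of \cref{result:posterior:xiAlt} and \cref{result:posterior:predictive} to ${\rho(s)=\alpha s^{-1-\alpha}}$, with the conditioning on ${\DeltaAlt=a}$, the cancellation of $a$ in the $\Beta(n_k-\alpha,\,n-n_k+1)$ jump density, and the rate ${a^{-\alpha}\alpha B(1-\alpha,n+1)}$ for the new atoms all as you compute them. The ${n=0}$ case and the identity ${B(1-\alpha,1)=1/(1-\alpha)}$ complete the argument exactly as in the paper.
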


Finally, we can choose a specific surrogate variable $\DeltaAlt$, or equivalently, a
specific distribution for $\zeta$. Of particular interest are cases in which the
resulting distribution is heavy-tailed \citep{Teh:Goerur:2009,Broderick:Jordan:Pitman:2012:1}. 
A heavy-tailed law can be generated for example as follows:
\begin{corollary}
  Choose $\xiAlt$ as in \cref{result:posterior:stable:alpha:alpha},
  and let $\zeta$ be an $\alpha$-stable random variable with density $f_{\zeta}$. Then
  \begin{equation}
    \label{eq:posterior:zeta:heavy:tailed}
    \abstmeasure(\zeta\in dy|\Pi_{1:n})\propto y^{K_n}f_{\zeta}(y)e^{-y\phi_{n,\alpha}}dy\;,
  \end{equation}
  which is the law of a generalized gamma variable which has been size-biased $K_n$ times.
\end{corollary}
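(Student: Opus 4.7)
The plan is to specialize Proposition \ref{result:posterior:xiAlt} to $\rho(s)=\alpha s^{-1-\alpha}$, simplify the resulting posterior density of $\DeltaAlt$ using the scale-homogeneity of the stable L\'evy density, and then push the density forward to $\zeta=\DeltaAlt^{-\alpha}$ by a change of variables.

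First I would compute the $a$-dependence of the two factors entering \eqref{eq:posterior:DeltaAlt}. Since $\rho(as)=\alpha a^{-1-\alpha}s^{-1-\alpha}$, the homogeneity of $\rho$ gives $\psi_n(a)=\phi_{n,\alpha}a^{-\alpha}$ with $\phi_{n,\alpha}:=\alpha\int_0^1(1-(1-s)^n)s^{-1-\alpha}ds$, and each $c_a(n,n_k)=\alpha a^{-\alpha}B(n_k-\alpha,n-n_k+1)$. Multiplying over $k=1,\ldots,K_n$ produces a factor $a^{-\alpha K_n}$, all Beta-function factors being independent of $a$ and hence absorbable into the proportionality constant. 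This yields
\begin{equation*}
\abstmeasure(\DeltaAlt\in da\,|\,\Pi_{1:n})\;\propto\; f_{\DeltaAlt}(a)\,e^{-\phi_{n,\alpha}a^{-\alpha}}\,a^{-\alpha K_n}\,da\;.
\end{equation*}

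Second I would change variables to $\zeta=\DeltaAlt^{-\alpha}$. Writing $a=y^{-1/\alpha}$, the prior Jacobian identity reads $f_{\zeta}(y)=\tfrac{1}{\alpha}y^{-1/\alpha-1}f_{\DeltaAlt}(y^{-1/\alpha})$, i.e.\ $f_{\DeltaAlt}(y^{-1/\alpha})=\alpha\,y^{1/\alpha+1}f_{\zeta}(y)$. Substituting into the display above and noting $a^{-\alpha K_n}=y^{K_n}$, the two Jacobian factors $y^{1/\alpha+1}$ and $y^{-1/\alpha-1}$ cancel the remaining powers of $y^{\pm 1/\alpha}$, giving
\begin{equation*}
\abstmeasure(\zeta\in dy\,|\,\Pi_{1:n})\;\propto\; y^{K_n}f_{\zeta}(y)\,e^{-y\phi_{n,\alpha}}\,dy\;,
\end{equation*}
which is the stated identity. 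The base density $f_{\zeta}(y)e^{-y\phi_{n,\alpha}}$ is the exponentially tilted $\alpha$-stable density, i.e.\ a generalized gamma law in the sense of Brix, and the factor $y^{K_n}$ is precisely $K_n$-fold size-biasing.

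The only real obstacle is careful bookkeeping of the exponents of $a$ and $y$ through the Jacobian and through the homogeneity of $\rho$; the essential simplification is that the stable scaling $\rho(as)\propto a^{-1-\alpha}\rho(s)$ makes both $\psi_n(a)$ and $\prod_k c_a(n,n_k)$ pure powers of $a^{-\alpha}$, so the change of variables $y=a^{-\alpha}$ collapses everything cleanly.
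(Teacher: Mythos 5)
Your proposal is correct and follows the route the paper intends: specialize \cref{result:posterior:xiAlt} to $\rho(s)=\alpha s^{-1-\alpha}$, observe that $\psi_n(a)$ and $\prod_k c_a(n,n_k)$ reduce to pure powers of $a^{-\alpha}$ times constants, and push forward through $\zeta=(\DeltaAlt)^{-\alpha}$, where the Jacobians cancel so that $f_{\DeltaAlt}(a)\,da$ becomes $f_\zeta(y)\,dy$. One remark: your value $c_a(n,n_k)=\alpha a^{-\alpha}B(n_k-\alpha,n-n_k+1)$ tacitly reads $c_a$ as $\int_0^1 s^{n_k}(1-s)^{n-n_k}\rho_a(s)\,ds$ with $\rho_a(s)=a\rho(as)$ (the literal display \eqref{eq:posterior:auxiliary:variables} would give an extra $a^{-1}$ per factor, hence a spurious $y^{K_n/\alpha}$ after the change of variables); your reading is the one consistent with \eqref{eq:rho:scaled} and with the stated conclusion, so the argument stands.
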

\noindent 
The law \eqref{eq:posterior:zeta:heavy:tailed} is easy to sample, 
since one can show that a size-biased generalized gamma variable
is a mixture of a single generalized gamma variable and gamma variables.
It does not yield a closed-form urn scheme, however, unlike the alternative
construction explained in \cref{sec:BFRY}.

%% \begin{enumerate}
%% \item The first row of $\Z$ contains $\Poisson(\frac{\zeta\alpha}{1-\alpha})$ consecutive non-zero
%%   entries. (In the Indian buffet terminology introduced by \citet{Griffiths:Ghahramani:2006},
%%   the first customer samples $\Poisson(\frac{\zeta\alpha}{1-\alpha})$).
%% \item When the $(n+1)$st row is sampled, the first $K_n$ columns contain at least one non-zero
%%   entry in previous rows. For each of these columns, \ie for ${k\leq K_n}$, entry $Z_{n+1,k}$ is non-zero
%%   with probability $\frac{n_k-\alpha}{n+1-\alpha}$. (The $(n+1)$st customer samples each previously
%%   selected dish $k$ with probability $\frac{n_k-\alpha}{n+1-\alpha}$).
%% \item Additionally, $\Poisson(C_n)$ consecutive non-zero entries are added to the $(n+1)$st row 
%%   starting in column ${K_n+1}$. (The $(n+1)$st customer samples $\Poisson(C_n)$ new dishes.)
%% \end{enumerate}

\subsection{Constructing the stable-beta by scaling and thinning}

If $\rho$ is a stable \Levy density, the random measure ${\xiAlt\sim\JOT(\rho)}$ is, conditionally
on ${\Delta_1=a}$, given by the \Levy density ${\rho_a(s)=a^{-\alpha}cs^{-\alpha-1}}$, which partly matches the
stable-beta process \eqref{eq:beta:stable:process}, 
up to a term of the form ${(1-s)^{\nu}}$.
The actual stable-beta can hence be obtained using conditional thinning, by choosing
${h(s):=(1-s)^{\gamma+\alpha-1}}$ in \cref{lemma:thinning}: If
\begin{equation}
  \label{eq:stable:beta:by:thinning}
  \xiAlt=\sum_{k\in\mathbb{N}}J_k\delta_{U_k}=\sum_{k\in\mathbb{N}}\frac{\Delta_{k+1}}{\Delta_1}
  \qquad\text{ and }\qquad
  B_k\sim\Bernoulli((1-J_k)^{\gamma+\alpha-1})\;,
\end{equation}
for ${\gamma>1-\alpha}$, then ${\sum_{k\in\mathbb{N}}B_kJ_k\delta_{U_k}}$
is a stable-beta CRM of the form \eqref{eq:beta:stable:process}. 

Since the function ${h}$ must take values in $[0,1]$ for the construction
to be valid, it does not cover the parameter range ${\theta\in(-\alpha,1-\alpha]}$.
We note the stable-beta can alternatively obtained from a stable using a scaling operation, 
for a wider range of parameters:
The transformation ${s\mapsto s/(s+\tau)}$ turns ${\alpha s^{-1-\alpha}}$
into ${\alpha \tau^{-\alpha}s^{-1-\alpha}(1-s)^{\alpha-1}}$;
thinning with ${(1-s)^{\theta}}$ then yields a stable-beta.

 \section{Coupling scaled and normalized random measures}
\label{sec:PK}

\def\nxiAlt{\overline{\xiAlt}}

Let ${\xiAlt\sim\JOT(\rho,\PAlt)}$ 
be a random measure constructed as in \eqref{eq:scaled:HCRM:basic}.
In the following, we generically denote the total mass of any random measure $\mu$
on a space $\atomspace$ as ${\mass{\mu}:=\mu(\atomspace)}$.
Define a random probability measure ${\nxiAlt:=\xiAlt/\mass{\xiAlt}}$.
It is obvious to ask whether
we can characterize the class of random probability measures so obtained from the class $\JOT$; such a characterization
is given by \cref{theorem:PK}, which relates $\JOT$ to the class of Poisson-Kingman measures.

\subsection{Poisson-Kingman measures}
\label{sec:pk:pk}

We briefly recall the Poisson-Kingman class \citep{Kingman:1975,Pitman:2003:1}:
Let $\mu$ be a random measure satisfying ${\mass{\mu}<\infty}$ a.s., so that the
random probability measure ${\overline{\mu}:=\mu/\mass{\mu}}$ is well-defined.
If $\mu$ is in particular a homogeneous CRM with \Levy density $\rho$, $\overline{\mu}$ is called
a Poisson-Kingman measure with parameter $\rho$, and its law is denoted $\PK(\rho)$.

In general, the random variables $\mass{\mu}$ and $\overline{\mu}$ are stochastically dependent---the
total mass before normalization
can carry significant information on the distribution of weights in a given realization of $\overline{\mu}$.
The only exception is the Dirichlet process: If a $\mu$ is a homogeneous CRM, then $\overline{\mu}$ and
$\mass{\mu}$ are independent if and only if $\mu$ is a gamma process, and $\overline{\mu}$ is hence
Dirichlet.
If $\mass{\mu}$ and $\overline{\mu}$ are dependent, 
conditioning the law $\PK(\rho)$ on ${\mass{\mu}=t}$ yields a family of normalized random measures
additionally  parametrized by $t$, denoted 
\begin{equation}
  \PK(\rho|t):=\Law(\overline{\mu}|\mass{\mu}=t)\;.
\end{equation}
We may now randomize $t$, by choosing a random variable $\TAlt$ with 
law $\QAlt$ on $\mathbb{R}_+$, and define the random probability measure
\begin{equation}
  \overline{\mu}^{\circ}:=\overline{\mu}|(\mass{\mu}=\TAlt)
  \qquad\text{ with law }
  \qquad
  \PK(\rho,\QAlt):=
  \Law(\overline{\mu}^{\circ})=\int_{\mathbb{R}_+}\PK(\rho|t)\QAlt(dt)\;.
\end{equation}
This is the 
general class of \kword{Poisson-Kingman measures} in the sense of \citet{Pitman:2003:1}.
If $\rho$ is the \Levy density of a gamma process, $\PK(\rho)$ is a Dirichlet process.
If $\rho$ is an $\alpha$-stable \Levy density and $\QAlt$ the law of a polynomially tilted
stable variable, then $\PK(\rho)$ is the law of normalized stable process, and
$\PK(\rho,\QAlt)$ that of a Pitman-Yor process \citep{Pitman:2003:1}.

\subsection{Normalizing scaled subordinators}

We begin with a general result, and then obtain important special cases as corollaries.
Start with a random measure ${\xi\sim\JOT(\rho)}$, and posit an arbitrary joint distribution for a surrogate
largest jump $\DeltaAlt$, and a surrogate total mass $\TAlt$, requiring only absolute continuity
${\Law(\DeltaAlt,\TAlt)\ll\Law(\Delta_1,\mass{\xi})}$. In other words, choose any non-negative,
measurable function $\omega$ satisfying ${\mean{\omega(\Delta_1,\mass{\xi})}=1}$, and define $\DeltaAlt$
and $\TAlt$ by
\begin{equation}
  \label{eq:PK:surrogate:density}
  \abstmeasure(\DeltaAlt\in da,\TAlt\in dt)=\omega(a,t)\abstmeasure(\Delta_1\in da,\mass{\xi}\in dt)\;.
\end{equation}
Then define the random measure
\begin{equation}
  \label{eq:pk:mixed:rm}
  \xiAltAlt\;:=\;\xi|(\Delta_1=\DeltaAlt,\mass{\xi}=\TAlt)\;=\;
  \xiAlt|(\mass{\xi}=\TAlt)\;,
\end{equation}
generalizing the definition of \eqref{eq:scaled:HCRM:basic}
by additionally mixing over the total mass. The next theorem is a more detailed
version of \cref{theorem:pk:simplified}, and generalizes a result on scale-invariant
Poisson processes by \citet*{Arratia:Barbour:Tavare:1999:1}.
\begin{theorem}
  \label{theorem:PK}
  Conditionally on ${\mass{\xi}=t}$ for any ${t\in(0,1]}$, the random probability measure $\overline{\xiAltAlt}$ 
  obtained by normalizing the random measure $\xiAltAlt$ defined in \eqref{eq:pk:mixed:rm}
  is a Poisson-Kingman measure. More precisely, if $(Q_k)$ are the ranked weights of 
  $\overline{\xiAltAlt}$, then 
  \begin{equation}
    \label{eq:PK:mixing:measure}
    \Law(Q_{1:\infty}|\TAlt=t)=\PK(\rho,\PAlt_t)
    \qquad\text{ where }\qquad
    \PAlt_t:=\Law(\DeltaAlt t|\TAlt=t)\;.
  \end{equation}
  Additionally conditioning on ${\DeltaAlt=a}$ makes the law independent of
  the choice of $\PAlt$, and 
  \begin{equation}
    \label{eq:PK:a:t}
    \Law(Q_{1:\infty}|\TAlt=t,\DeltaAlt=a)=\PK(\rho|at)
  \end{equation}
  holds for any ${t\in(0,1]}$.
\end{theorem}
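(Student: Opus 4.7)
The plan is to reduce the theorem to the classical ``remove the largest atom'' structural property of Poisson processes, together with a short observation that becomes available precisely because ${t\leq 1}$. As a first simplification, I would dispose of the tilting by $\omega$: since $\omega$ reweights only the joint distribution of $(\Delta_1,\mass{\xi})$, the regular conditional law of the remaining jumps $(\Delta_2,\Delta_3,\ldots)$ given $(\Delta_1,\mass{\xi})$ is unchanged in the surrogate model, so ${\xiAltAlt|(\DeltaAlt=a,\TAlt=t)\equdist\xi|(\Delta_1=a,\mass{\xi}=t)}$. It therefore suffices to prove \eqref{eq:PK:a:t} with the pair $(\DeltaAlt,\TAlt)$ replaced by $(\Delta_1,\mass{\xi})$; then \eqref{eq:PK:mixing:measure} follows by integrating against $\Law(\DeltaAlt|\TAlt=t)$ and pushing forward under ${a\mapsto at}$, which is exactly the transport producing $\PAlt_t=\Law(\DeltaAlt\, t|\TAlt=t)$.

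The core of the argument uses the standard fact that conditionally on ${\Delta_1=a}$, the point process ${\lbrace\Delta_2,\Delta_3,\ldots\rbrace}$ is Poisson with intensity $\rho(s)\indicator{s\leq a}ds$. Under this conditioning $\mass{\xi}=a^{-1}\sum_{k\geq 2}\Delta_k$, so further conditioning on $\mass{\xi}=t$ is equivalent to conditioning the truncated subordinator on total mass $at$. The ranked weights of $\overline{\xi}$ are $\Delta_{k+1}/\sum_{j\geq 2}\Delta_j$, so by the very definition of the Poisson--Kingman class,
\begin{equation*}
    \Law(Q_{1:\infty}|\Delta_1=a,\mass{\xi}=t)=\PK(\rho\indicator{s\leq a}\,|\,at)\;.
\end{equation*}

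The decisive step is then a vacuous-conditioning observation: if a sequence of positive numbers $(X_k)$ sums to $at$, each term satisfies ${X_k\leq at}$. Hence whenever ${t\leq 1}$ we have ${at\leq a}$, and the indicator $\indicator{s\leq a}$ in the L\'evy density is almost surely redundant once we condition on total mass $at$. Consequently ${\PK(\rho\indicator{s\leq a}\,|\,at)=\PK(\rho\,|\,at)}$, which is \eqref{eq:PK:a:t}, and mixing in $\DeltaAlt$ and substituting $\tau=\DeltaAlt t$ delivers \eqref{eq:PK:mixing:measure}. The converse (every Poisson--Kingman measure arises this way) is automatic by choosing $\omega$ so that $\TAlt$ is a point mass at some ${t\leq 1}$ and $\DeltaAlt$ has the prescribed mixing law.

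The main obstacle I anticipate is the rigor of conditioning on the null event ${\mass{\xi}=t}$: the disintegration above requires a regular conditional density for $\mass{\xi}$ given $\Delta_1=a$, together with a version that is jointly continuous enough to identify with the formal $\PK(\rho\,|\,at)$ family in a version-free way. Under the regularity on $\rho$ assumed throughout the paper---which in particular makes $\mass{\xi}|\Delta_1=a$ absolutely continuous with strictly positive density on $(0,1]$---this is routine, but it is the one technical point that needs to be set up carefully before the truncation-is-vacuous step can be invoked almost surely in $t$.
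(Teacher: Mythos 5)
Your argument is correct, and it reaches \eqref{eq:PK:a:t} by a genuinely different route from the paper. The paper proves the key identity through two computational lemmas: it first derives the explicit joint density ${f_{\Delta_1,\mass{\xi}}(a,t)=a\rho(a)f_{\Delta_1}(at)}$ for ${t\leq 1}$ from Pitman--Yor's formula for the joint law of the total mass and the largest normalized jump, and then establishes ${\PK(\rho_a|t)=\PK(\rho|at)}$ by writing out Pitman's EPPF for $\PK(\rho_a|t)$, observing that the indicator ${\indicator{su\leq 1}}$ is identically $1$ when ${s\leq t\leq 1}$, and changing variables. Your version replaces both lemmas with the structural observation that, given ${\Delta_1=a}$, the remaining jumps form a Poisson process with intensity ${\rho(s)\indicator{s\leq a}}$, so conditioning on total mass $at$ with ${at\leq a}$ makes the truncation vacuous. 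This is more conceptual and avoids the EPPF machinery entirely; what it costs you is that the phrase ``the indicator in the \Levy density is almost surely redundant once we condition'' is not literally meaningful, since the conditioned process is no longer Poisson and has no \Levy density. The clean way to close that step is the independence decomposition: split the unconditioned process with intensity $\rho$ into independent pieces $N_{\leq a}$ and $N_{>a}$ with intensities ${\rho\indicator{s\leq a}}$ and ${\rho\indicator{s>a}}$; any configuration with ${N_{>a}\neq\emptyset}$ has total mass exceeding ${a\geq at}$, so the disintegration of the jumps given total mass $t'$ agrees, for a.e.\ ${t'\leq a}$, with that of $N_{\leq a}$ alone, giving ${\PK(\rho\indicator{s\leq a}\,|\,at)=\PK(\rho|at)}$. (Note this is exactly equivalent to the paper's \cref{lemma:PK:at}, since rescaling ${s\mapsto s/a}$ carries ${\rho\indicator{s\leq a}}$ to $\rho_a$ and commutes with normalization.) Your reduction of the tilted pair ${(\DeltaAlt,\TAlt)}$ to ${(\Delta_1,\mass{\xi})}$ and the final mixing with the change of variables ${z=at}$ match the paper's treatment, and the remaining caveat you raise about versions of the disintegration is the same regularity issue the paper absorbs into the density computations.
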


\noindent

Two special cases of \cref{theorem:PK} are of particular interest: One is the basic case 
${\xiAltAlt\in\JOT(\rho)}$, where ${\DeltaAlt=\Delta_1}$ and ${\TAlt=\mass{\xi}}$.
The other is a choice of $\PAlt_t$ that lets us obtain
arbitrary $\PK(\rho,\gamma)$ measures. For both cases, we can obtain explicit
forms for $\PAlt_t$. We again write $f_{\mass{\Delta}}$ for the Lebesgue
density of ${\mass{\Delta}:=\sum_{k=1}^{\infty}\Delta_k}$.
\begin{corollary}
  \label{corollary:pk:basic}
  Let ${\xiAltAlt\in\JOT(\rho,\PAlt)}$. Then for any ${t\in(0,1]}$, 
    identity \eqref{eq:PK:mixing:measure} holds with
  \begin{equation}
    \PAlt_t(dz)=\frac{z\rho(z/t)f_{\mass{\Delta}}(z)dz}{\int_0^{\infty} y\rho(y/t)f_{\mass{\Delta}}(y)dy}\;.
  \end{equation}
\end{corollary}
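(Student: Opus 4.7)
The plan is to invoke \cref{theorem:PK} in the basic case $\DeltaAlt = \Delta_1$, $\TAlt = \mass{\xi}$, reducing the corollary to an explicit computation of $\PAlt_t = \Law(\DeltaAlt t \mid \TAlt = t)$. The derivation has two substantive steps: a change-of-variables calculation for the joint density of $(\Delta_1, \mass{\xi})$, followed by an identity, valid precisely when $t \leq 1$, that collapses a restricted-jumps density onto $f_{\mass{\Delta}}$.

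For the first step, denote by $g_a$ the density of the total mass of a subordinator with \Levy density $\rho(s)\indicator{s \leq a}$; equivalently, $g_a$ is the conditional density of $\mass{\Delta} - \Delta_1$ given $\Delta_1 = a$. Combined with Kingman's formula \eqref{eq:density:largest:jump} for $f_{\Delta_1}$, this yields
\begin{equation*}
f_{(\Delta_1, \mass{\Delta})}(a, s) = \rho(a)\, e^{-\Lambda(a)}\, g_a(s - a).
\end{equation*}
Since $\mass{\xi} = \mass{\Delta}/\Delta_1 - 1$, the map $(\Delta_1, \mass{\Delta}) \mapsto (\Delta_1, \mass{\xi})$ has Jacobian $\Delta_1$ and gives
\begin{equation*}
\abstmeasure(\Delta_1 \in da,\, \mass{\xi} \in dt) = a\, \rho(a)\, e^{-\Lambda(a)}\, g_a(at)\, da\, dt.
\end{equation*}
Conditioning on $\mass{\xi} = t$ and substituting $z = at$ then produces
\begin{equation*}
\PAlt_t(dz) \;\propto\; z\, \rho(z/t)\, e^{-\Lambda(z/t)}\, g_{z/t}(z)\, dz.
\end{equation*}

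The crux is the identity $e^{-\Lambda(a)}\, g_a(z) = f_{\mass{\Delta}}(z)$ for $z \leq a$. Since $\Delta_1 \leq \mass{\Delta}$ almost surely, the event $\lbrace \mass{\Delta} = z \rbrace$ sits inside $\lbrace \Delta_1 \leq z \rbrace \subseteq \lbrace \Delta_1 \leq a \rbrace$ whenever $z \leq a$, so $\abstmeasure(\mass{\Delta} \in dz) = \abstmeasure(\mass{\Delta} \in dz,\, \Delta_1 \leq a)$. On the other hand, conditioning on $\lbrace \Delta_1 \leq a \rbrace$ (an event of probability $e^{-\Lambda(a)}$) truncates the subordinator's jumps to $(0, a]$, whose total mass has density $g_a$; hence $\abstmeasure(\mass{\Delta} \in dz,\, \Delta_1 \leq a) = e^{-\Lambda(a)}\, g_a(z)\, dz$. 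Applying the identity with $a = z/t$, permissible because $z/t \geq z$ precisely when $t \in (0, 1]$, converts the expression above into $\PAlt_t(dz) \propto z\, \rho(z/t)\, f_{\mass{\Delta}}(z)\, dz$; normalization gives the stated formula.

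The only substantive obstacle is recognizing this collapse. Without the restriction $t \leq 1$, the answer retains the more delicate density $g_{z/t}$, and the clean form in terms of $f_{\mass{\Delta}}$ is unavailable; the role of $t \leq 1$ is precisely to render the event $\lbrace \Delta_1 \leq z/t \rbrace$ trivial on $\lbrace \mass{\Delta} = z \rbrace$, reducing a restricted-jumps quantity to the unconstrained marginal.
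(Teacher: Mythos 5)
Your proof is correct and takes essentially the same route as the paper: both reduce the corollary to computing ${\Law(\Delta_1 t\mid \mass{\xi}=t)}$ from the joint density of ${(\Delta_1,\mass{\xi})}$, and both locate the role of ${t\le 1}$ in the collapse of the truncated-jump total-mass density onto $f_{\mass{\Delta}}$. The only difference is that where the paper's \cref{lemma:densities} obtains the joint density by citing Proposition~45 of Pitman and Yor, you rederive it from Kingman's formula \eqref{eq:density:largest:jump} together with the identity ${e^{-\Lambda(a)}g_a(z)=f_{\mass{\Delta}}(z)}$ for ${z\le a}$, whose justification (independence of the Poisson configurations on ${(0,a]}$ and ${(a,\infty)}$, plus ${\Delta_1\le\mass{\Delta}}$) is sound, making your argument self-contained.
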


Now suppose we wish to obtain ${\PK(\rho,\gamma)}$, for an arbitrary measure $\gamma$.
For simplicity, require the density 
${h:=d\gamma/d\Law(\mass{\Delta})}$ exists. The measure $\gamma$ then has Lebesgue density
${hf_{\mass{\Delta}}}$. 
\begin{corollary}
  \label{corollary:pk:general}
  Let ${h:[0,\infty)\rightarrow[0,\infty)}$ be measurable, with 
      ${\mean{h(\mass{\Delta})}=1}$.
  If $\DeltaAlt$ and $\TAlt$ are defined by choosing the density in
  \eqref{eq:PK:surrogate:density} as
  \begin{equation}
    \omega(a,t):=\frac{h(at)}{a\rho(a)}
    \qquad\text{ then }\qquad
    \PAlt_t(dz)=h(z)f_{\mass{\Delta}}(z)dz
  \end{equation}
  holds for all ${t\in(0,1]}$. Any Poisson-Kingman model can be obtained in this manner.
\end{corollary}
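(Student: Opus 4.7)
My plan is to derive this corollary as a direct consequence of \cref{theorem:PK} together with \cref{corollary:pk:basic}, exploiting a convenient cancellation built into the specific form of $\omega$. By \cref{theorem:PK}, once we establish that $\PAlt_t(dz) = h(z) f_{\mass{\Delta}}(z)\,dz$ for every $t \in (0,1]$, we immediately obtain $\Law(Q_{1:\infty}\mid\TAlt=t) = \PK(\rho, hf_{\mass{\Delta}})$, giving the stated Poisson-Kingman law. The converse---that every PK model arises in this way---follows by taking $h = d\gamma/d\Law(\mass{\Delta})$ for any mixing measure $\gamma$ absolutely continuous with respect to $\Law(\mass{\Delta})$; the normalization $\mean{h(\mass{\Delta})}=1$ is then automatic.

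To identify $\PAlt_t$, I would write, for an arbitrary test function $\phi$,
\[
\int \phi(z)\,\PAlt_t(dz) \;=\; \mean{\phi(\DeltaAlt\TAlt)\mid\TAlt=t} \;=\; \frac{\mean{\omega(\Delta_1,\mass{\xi})\,\phi(\Delta_1\mass{\xi})\mid\mass{\xi}=t}}{\mean{\omega(\Delta_1,\mass{\xi})\mid\mass{\xi}=t}},
\]
expressing $\PAlt_t$ as a change of measure against the base joint law of $(\Delta_1,\mass{\xi})$. Changing variable to $z=\Delta_1\mass{\xi}$ with $\mass{\xi}=t$ held fixed (so $\Delta_1=z/t$), the weight becomes $\omega(z/t,t) = t\,h(z)/(z\rho(z/t))$, while \cref{corollary:pk:basic}---specialized to the base case $\PAlt=\Law(\Delta_1)$---identifies the unweighted conditional density of $\Delta_1\mass{\xi}$ given $\mass{\xi}=t$ as proportional in $z$ to $z\rho(z/t) f_{\mass{\Delta}}(z)$.

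Multiplying these two ingredients, the nuisance factor $z\rho(z/t)$ cancels, leaving a numerator proportional in $z$ to $h(z)f_{\mass{\Delta}}(z)$; the hypothesis $\mean{h(\mass{\Delta})}=1$ supplies the correct normalization in both numerator and denominator of the ratio, so $\PAlt_t(dz)=h(z)f_{\mass{\Delta}}(z)\,dz$ independently of $t\in(0,1]$. The main technical hurdle is verifying that the stated $\omega$ defines a bona fide probability density ratio, namely $\mean{\omega(\Delta_1,\mass{\xi})}=1$: this is the $\phi\equiv 1$ case of the computation above, and must reduce to $\mean{h(\mass{\Delta})}=1$ by the same cancellation---a consistency check that ultimately rests on a Perman-type identity relating the joint law of $(\Delta_1,\mass{\Delta})$ to $f_{\mass{\Delta}}$ and $\rho$. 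Once this accounting is in place, the corollary is obtained simply by threading the specific $\omega$ through \cref{theorem:PK} and \cref{corollary:pk:basic}.
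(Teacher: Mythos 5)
Your argument is correct and is essentially the paper's: the identity ${\PAlt_t(dz)\propto\omega(z/t,t)\,g_\rho(z/t,t)\,dz}$ obtained at the end of the proof of \cref{theorem:PK} (of which \cref{corollary:pk:basic} is just the ${\omega\equiv 1}$ special case), together with the cancellation of the factor ${z\rho(z/t)}$, is exactly how the paper arrives at ${\PAlt_t(dz)=h(z)f_{\mass{\Delta}}(z)\,dz}$, with ${\mean{h(\mass{\Delta})}=1}$ supplying the normalization and the converse following by taking ${h=d\gamma/d\Law(\mass{\Delta})}$.

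One caveat concerning the step you single out as the main technical hurdle: the check ${\mean{\omega(\Delta_1,\mass{\xi})}=1}$ does \emph{not} reduce to ${\mean{h(\mass{\Delta})}=1}$. Using the joint density ${a\rho(a)f_{\mass{\Delta}}(at)}$ valid on ${\lbrace t\leq 1\rbrace}$ and substituting ${z=at}$, the contribution of that region alone is
\begin{equation*}
\int_0^1\!\!\int_0^\infty \frac{h(at)}{a\rho(a)}\,a\rho(a)f_{\mass{\Delta}}(at)\,da\,dt
=\Bigl(\int_0^1 t^{-1}dt\Bigr)\mean{h(\mass{\Delta})}=\infty\;,
\end{equation*}
so the stated $\omega$ is not literally a probability density ratio. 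This does not damage the conclusion: the conditional law ${\Law(\DeltaAlt\mid\TAlt=t)}$, and hence $\PAlt_t$, is unchanged if $\omega$ is multiplied by any positive function of $t$ alone, so one may renormalize $\omega(\cdot,t)$ slice by slice. But the consistency check as you propose it would fail, and the cancellation you invoke is not the mechanism that rescues it; the paper is equally silent on this point.
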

We emphasize that, even though $\PAlt_t$ is now independent of $t$, it is still necessary to
condition on a value ${t\leq 1}$.
The requirement ${\gamma\ll\Law(\mass{\Delta})}$ keeps expressions simple, but
a closer look at \cref{theorem:PK} shows it is not essential; for example,
$\PAlt_t$ could be a point mass.

\subsection{Examples}
\label{sec:pk:examples}

As examples of \cref{theorem:PK}, we consider the cases where $\rho$ is 
a gamma, scale-invariant, or stable subordinator. The scale-invariant case
clarifies how the IBP relates to the Chinese restaurant process. 
An additional example shows which feature model has an analogous relationship
to the two-parameter Chinese restaurant process.

First suppose $\Delta_{1:\infty}$ are the ranked jumps of
a gamma process with parameter $\theta$, which has \Levy density
\begin{equation}
  \label{eq:levy:density:gamma}
  \rho(s):=\theta s^{-1}e^{-s}\;.
\end{equation}
In this case, 
the \Levy measure $\rho_a$ defined in
\eqref{eq:rho:scaled} for the scaled subordinator 
is still of gamma type---albeit with jumps bounded by 1---so it is not very surprising that 
$\overline{\xi}_a$ is a Dirichlet process:
\begin{proposition}[The gamma case]
  \label{result:example:gamma}
  Let ${\xi\in\JOT(\rho)}$, where $\rho$ is the gamma \Levy density
  \eqref{eq:levy:density:gamma}. Then conditionally on ${\mass{\xi}=t\leq 1}$,
  the normalized measure $\overline{\xi}$ is a Dirichlet process with concentration
  $\theta$, \ie its weights have law
  \begin{equation}
    \Law(Q_{1:\infty}|\mass{\xi}=t)=\PD(0,\theta) \qquad\text{ for any }t\in(0,1]\;.
  \end{equation}
\end{proposition}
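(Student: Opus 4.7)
The plan is to derive this as a direct corollary of \cref{theorem:PK} (specifically equation \eqref{eq:PK:a:t}), combined with the classical independence property of the gamma subordinator.

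First, I would recall the defining characteristic of the gamma process: if $\mu$ is a homogeneous CRM with \Levy density $\rho(s) = \theta s^{-1} e^{-s}$, then the normalized random measure $\overline{\mu}$ is a Dirichlet process with concentration $\theta$, and moreover $\overline{\mu}$ is stochastically independent of the total mass $\mass{\mu}$. This is the classical Ferguson-Kingman characterization; it is equivalent to the statement that
\begin{equation*}
\PK(\rho\,|\,s) = \PD(0,\theta) \qquad\text{for every } s > 0.
\end{equation*}
In other words, conditioning $\PK(\rho)$ on any value of the total mass does not alter the law of the ranked normalized jumps.

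Next, for $\xi \in \JOT(\rho)$ we have $\DeltaAlt = \Delta_1$ and $\TAlt = \mass{\xi}$ in the notation of \cref{theorem:PK}. Applying identity \eqref{eq:PK:a:t} yields, for any $t \in (0,1]$ and any $a$ in the support of $\Law(\Delta_1 \,|\, \mass{\xi} = t)$,
\begin{equation*}
\Law(Q_{1:\infty} \,|\, \mass{\xi} = t,\, \Delta_1 = a) = \PK(\rho \,|\, at) = \PD(0,\theta),
\end{equation*}
where the second equality uses the independence property above (with $s = at$). Since the right-hand side does not depend on $a$, marginalizing out $\Delta_1$ against its conditional law given $\mass{\xi} = t$ preserves the identity, giving $\Law(Q_{1:\infty} \,|\, \mass{\xi} = t) = \PD(0,\theta)$.

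The argument is essentially a one-line corollary once \cref{theorem:PK} is in place, so there is no real obstacle beyond invoking the Ferguson-Kingman characterization correctly. The only subtle point is the restriction $t \leq 1$: this is inherited from \cref{theorem:PK} and is precisely what allows the Poisson-Kingman interpretation, since the weights $J_k = \Delta_{k+1}/\Delta_1$ are guaranteed to lie in $[0,1]$ only when the total mass is conditioned to be at most one. One might also remark that the result is somewhat striking: even though the scaled gamma subordinator is not itself a gamma CRM (the scaling by $\Delta_1$ breaks the complete randomness property), the normalization absorbs the scaling factor and recovers the Dirichlet law exactly.
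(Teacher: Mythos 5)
Your proof is correct and follows essentially the same route as the paper: both arguments reduce to the classical fact that for the gamma \Levy density ${\PK(\rho|s)=\PD(0,\theta)}$ for all ${s>0}$, and then invoke \cref{theorem:PK} to express ${\Law(Q_{1:\infty}|\mass{\xi}=t)}$ as a mixture of such laws, which is therefore $\PD(0,\theta)$. The only cosmetic difference is that you condition on ${\Delta_1=a}$ via \eqref{eq:PK:a:t} and marginalize by hand, whereas the paper substitutes directly into \cref{corollary:pk:basic}, which is the already-marginalized form of the same identity.
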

Although the gamma process yields a one-parameter Poisson-Dirichlet, and hence a CRP,
it is not suitable for constructing the IBP. Rather, the CRP and IBP can be constructed
jointly from a scale-invariant Poisson \citep[see][Theorem 3.1]{Arratia:Barbour:Tavare:1999:1}.
In this case, $\mass{\xi}$ has the generalized
Dickman distribution ${\LAW(D_{\theta})}$ described in \cref{sec:Dickman}.
\begin{proposition}[The IBP and CRP derived from a single random measure]
  \label{result:example:scale:invariant}
  \mbox{ }\\
  If ${\xi\sim\JOT(\rho)}$ for the scale-invariant Poisson density ${\rho(s)=\theta s^{-1}}$, 
  \begin{equation}
  \Law(Q_{1:\infty}|\mass{\xi}=t) 
  =
  \PD(0,\theta) \qquad\text{ for all }t\in(0,1]\;.
\end{equation}
  The random partition induced by ${\overline{\xi}|(\mass{\xi}\leq 1)}$ is hence the Chinese restaurant process
  $\CRP(\theta)$, and the feature model induced by $\xi$ is the $\IBP(\theta)$.
\end{proposition}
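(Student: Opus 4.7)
The plan is to invoke \cref{theorem:PK} and identify the resulting Poisson-Kingman law as $\PD(0,\theta)$. First, observe that for the scale-invariant density $\rho(s)=\theta s^{-1}$, the conditional density $\rho_a(s)=a\rho(as)\indicator{s\leq 1}=\theta s^{-1}\indicator{s\leq 1}$ does not depend on $a$, so $\xi\sim\JOT(\rho)$ is already a CRM with L\'evy density supported on $(0,1]$, and its total mass $\mass{\xi}$ has the generalized Dickman distribution $\Law(D_{\theta})$ introduced in \cref{sec:Dickman}. In particular, the conditioning $\mass{\xi}=t\leq 1$ is always on an event of positive probability density.

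By identity \eqref{eq:PK:a:t} of \cref{theorem:PK}, for every $t\in(0,1]$ and every admissible $a$ one has $\Law(Q_{1:\infty}\mid\TAlt=t,\DeltaAlt=a)=\PK(\rho\mid at)$, with no dependence on the surrogate law $\PAlt$. Since in the $\JOT(\rho)$ case $\DeltaAlt=\Delta_1$ and $\TAlt=\mass{\xi}$, it suffices to identify $\PK(\rho\mid s)$ with $\PD(0,\theta)$ for $s\in(0,1]$ and then average out $\Delta_1$. The identification of $\PK(\theta s^{-1}\mid s)=\PD(0,\theta)$ is precisely Theorem~3.1 of \citet*{Arratia:Barbour:Tavare:1999:1}, which \cref{theorem:PK} generalizes: the ranked normalized jumps of a scale-invariant Poisson process on $(0,1]$, conditional on total mass $s\leq 1$, have the one-parameter Poisson-Dirichlet law. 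Since $\PD(0,\theta)$ does not depend on $a$, integrating over the conditional law of $\Delta_1$ given $\mass{\xi}=t$ leaves the distribution unchanged, yielding $\Law(Q_{1:\infty}\mid\mass{\xi}=t)=\PD(0,\theta)$ for all $t\in(0,1]$.

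The two corollary assertions then follow quickly. The exchangeable partition induced by sampling from a $\PD(0,\theta)$ random probability measure is by the Kingman correspondence the Chinese restaurant process with parameter $\theta$, so the partition induced by $\overline{\xi}\mid(\mass{\xi}\leq 1)$ is $\CRP(\theta)$. The feature-model claim is essentially by construction: as discussed in \cref{sec:overview:results}, substituting the CRM $\xi\sim\JOT(\theta s^{-1})$ into the Bernoulli sampling scheme \eqref{eq:Bernoulli:sampling:basic} recovers the one-parameter $\IBP(\theta)$ of \citet{Griffiths:Ghahramani:2006}, as shown by \citet{Thibeaux:Jordan:2007}.

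The main obstacle is the identification $\PK(\theta s^{-1}\mid s)=\PD(0,\theta)$, which is subtle because the unconditioned scale-invariant Poisson process on $(0,\infty)$ has infinite total mass; the conditioning on a finite total mass must be read via the truncation built into the $\JOT$ construction, where the effective L\'evy density is $\theta s^{-1}\indicator{s\leq 1}$ with finite mass $D_\theta$, so the ABT argument applies directly. Everything else is essentially unpacking \cref{theorem:PK} in this special case and appealing to the classical Dickman-Dirichlet relationship.
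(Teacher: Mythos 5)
Your proof is correct, but it resolves the key identification by a genuinely different route than the paper. Both arguments start the same way: note that $\rho_a(s)=\theta s^{-1}\indicator{s\leq 1}$ is independent of $a$, so $\xi$ is a CRM with Dickman total mass, and apply \cref{theorem:PK} to reduce everything to evaluating the conditioned Poisson--Kingman law $\PK(\rho_a\vert\cdot)$. At that point you import Theorem~3.1 of \citet{Arratia:Barbour:Tavare:1999:1} wholesale to identify this law with $\PD(0,\theta)$ (correctly noting that the conditioning must be read through the truncation to $(0,1]$, since the untruncated scale-invariant process has infinite total mass). The paper instead stays inside the Poisson--Kingman calculus: it uses the invariance $\PK(e^{-as}\rho\vert t)=\PK(\rho\vert t)$ of PK laws under exponential tilting \citep[\S 4.2]{Pitman:2003:1} to convert the truncated scale-invariant density $\theta s^{-1}\indicator{s\leq 1}$ into the (truncated) gamma density $\theta s^{-1}e^{-s}\indicator{s\leq 1}$, and then invokes the gamma case (\cref{result:example:gamma}), where $\PK(\rho'\vert t)=\PD(0,\theta)$ for every $t$ by the independence of $\overline{\mu}$ and $\mass{\mu}$ that characterizes the Dirichlet process. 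The trade-off: your argument is shorter and leans on a published theorem, but that theorem is precisely the special case this proposition is framed as re-deriving (the paper explicitly remarks that the ABT proof ``does not seem to generalize''), so citing it forfeits the self-contained derivation that the tilting argument provides; on the other hand, your explicit discussion of the infinite-mass subtlety and of why averaging over $\Delta_1$ is harmless is a point the paper's write-up leaves implicit. The concluding CRP/IBP assertions are handled the same way in both.
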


Using a stable subordinator, we can define feature models with Pitman-Yor-like properties. This requires
the general class $\JOT(\rho,\PAlt)$, though; the case $\JOT(\rho)$ for stable $\rho$ is mostly of theoretical
interest:
\begin{proposition}[The stable case]
  \label{result:example:stable}
  If ${\xi\sim\JOT(\rho)}$ for a subordinator with ${\rho(s)=c s^{-1-\alpha}}$, 
  \begin{equation}
  \Law(Q_{1:\infty}|\mass{\xi}=t) 
  =
  \PD(\alpha,\alpha) \qquad\text{ for all }t\in(0,1]\;,
\end{equation}
  and ${\overline{\xi}|\mass{\xi}=t}$ is hence a Pitman-Yor process with parameters $(\alpha,\alpha)$.
\end{proposition}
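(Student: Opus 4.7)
The plan is to apply \cref{corollary:pk:basic} directly to the stable $\rho$ and then identify the resulting mixing measure with the Pitman--Yor representation of two-parameter Poisson--Dirichlet laws. Recall that $\xi \sim \JOT(\rho)$ is the special case of $\JOT(\rho,\PAlt)$ in which $\DeltaAlt = \Delta_1$ and $\TAlt = \mass{\xi}$, so \cref{corollary:pk:basic} applies without modification. Since $\rho(s)=cs^{-1-\alpha}$ is the \Levy density of an $\alpha$-stable subordinator, the total mass $\mass{\Delta} = \sum_k\Delta_k$ is itself $\alpha$-stable; denote its density by $f_{\mass{\Delta}}$.

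By \cref{corollary:pk:basic}, for any $t\in(0,1]$ we have $\Law(Q_{1:\infty}\mid \mass{\xi}=t) = \PK(\rho,\PAlt_t)$ with
\begin{equation*}
\PAlt_t(dz) \;=\; \frac{z\,\rho(z/t)\,f_{\mass{\Delta}}(z)}{\int_0^{\infty} y\,\rho(y/t)\,f_{\mass{\Delta}}(y)\,dy}\,dz\;.
\end{equation*}
Substituting $\rho(s)=cs^{-1-\alpha}$ gives $z\,\rho(z/t) = c\,t^{1+\alpha}\,z^{-\alpha}$; the factor $c\,t^{1+\alpha}$ is independent of $z$ and cancels between numerator and denominator, so
\begin{equation*}
\PAlt_t(dz) \;=\; \frac{z^{-\alpha}\,f_{\mass{\Delta}}(z)}{\int_0^{\infty}y^{-\alpha}\,f_{\mass{\Delta}}(y)\,dy}\,dz\;,
\end{equation*}
which is independent of $t$ and is precisely the polynomially $\alpha$-tilted $\alpha$-stable law.

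To finish, I invoke the standard Poisson--Kingman representation of the two-parameter Poisson--Dirichlet family (as in \cite{Pitman:2003:1}): with $\rho$ an $\alpha$-stable \Levy density and mixing law $\gamma_\theta(dz)\propto z^{-\theta}f_{\mass{\Delta}}(z)\,dz$ (for $\theta>-\alpha$), the Poisson--Kingman measure $\PK(\rho,\gamma_\theta)$ has ranked weights distributed as $\PD(\alpha,\theta)$. Taking $\theta=\alpha$ matches the mixing measure derived above, so $\Law(Q_{1:\infty}\mid \mass{\xi}=t)=\PD(\alpha,\alpha)$ for every $t\in(0,1]$, and hence $\overline{\xi}\mid\mass{\xi}=t$ is a Pitman--Yor process with parameters $(\alpha,\alpha)$.

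The main obstacle is essentially only the pattern-matching in the last step: one must recognize that the tilted stable law produced by \cref{corollary:pk:basic} is exactly the Pitman--Yor mixing density for $\theta=\alpha$. Everything else is a one-line computation, enabled by the scaling $\rho(z/t) = t^{1+\alpha}\rho(z)$ that allows the $t$-dependence to factor out. If a self-contained derivation were desired in place of citing \cite{Pitman:2003:1}, one could instead compute the EPPF of $\PK(\rho,\PAlt_t)$ from the general Poisson--Kingman formula and verify it coincides with the $\PD(\alpha,\alpha)$ EPPF, but this merely reproduces the standard identification.
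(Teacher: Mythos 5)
Your proposal is correct and follows essentially the same route as the paper: apply \cref{corollary:pk:basic} with $\DeltaAlt=\Delta_1$, $\TAlt=\mass{\xi}$, use the scaling $z\rho(z/t)\propto z^{-\alpha}$ to see that $\PAlt_t$ is the polynomially tilted stable law independent of $t$, and identify $\PK(\rho,\gamma_\alpha)$ with $\PD(\alpha,\alpha)$ via \citet{Pitman:2003:1}. (Minor note: the paper's appendix writes the tilt as $z^{-\alpha-1}f_{\mass{\Delta}}(z)$, which appears to be a typo; your exponent $z^{-\alpha}$ is the one that actually follows from \cref{corollary:pk:basic} and matches the standard mixing density for $\PD(\alpha,\alpha)$.)
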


That the Dirichlet process can be obtained from the unitary random measure underlying the IBP raises
the question which unitary random measure has a similar relationship to the Pitman-Yor process.
Obtaining the Pitman-Yor process requires the general formulation in \cref{corollary:pk:general}.

\begin{proposition}
  \label{result:example:PY}
  Choose an $\alpha$-stable subordinator, ${\rho(s)=cs^{-1-\alpha}}$, and choose
  $\omega$ as in \cref{corollary:pk:general}, with ${h(s):=s^{-\theta}}$. Then conditionally on ${\mass{\xi}\leq 1}$,
  the random probability measure $\overline{\xiAltAlt}$ is a Pitman-Yor process with parameters
  $(\alpha,\theta)$, \ie
  \begin{equation}
    \Law(Q_{1:\infty}|\mass{\xi}=t)=\PD(\alpha,\theta)
    \qquad\text{ for all }
    t\in (0,1]\;.
  \end{equation}
\end{proposition}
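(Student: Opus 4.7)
The plan is to combine Corollary~\ref{corollary:pk:general} with the classical characterization of the two-parameter Poisson-Dirichlet law as a polynomial tilt of a normalized stable subordinator, due to Perman-Pitman-Yor. The corollary does almost all of the structural work; what remains is to recognize the resulting mixture as $\PD(\alpha,\theta)$ and to verify integrability of the chosen $h$.

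First I would check that the choice $h(s)=s^{-\theta}$ is admissible. Since $\rho(s)=cs^{-1-\alpha}$ is an $\alpha$-stable density, the total mass $\mass{\Delta}=\sum_k\Delta_k$ is positive $\alpha$-stable; its negative moments are finite of all orders, while its positive moments exist only below order $\alpha$. Consequently $\mean{h(\mass{\Delta})}=\mean{\mass{\Delta}^{-\theta}}$ is finite and strictly positive precisely when $\theta>-\alpha$, which is exactly the parameter range for which $\PD(\alpha,\theta)$ is defined. Rescaling $h$ by this expectation satisfies the normalization hypothesis of Corollary~\ref{corollary:pk:general}.

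Next, I would plug $h(s)=s^{-\theta}$ (normalized) into Corollary~\ref{corollary:pk:general} to obtain the surrogate-variable joint density $\omega(a,t)=h(at)/(a\rho(a))$, and read off the mixing measure
\begin{equation*}
  \PAlt_t(dz)\;=\;h(z)f_{\mass{\Delta}}(z)\,dz\;\propto\;z^{-\theta}f_{\mass{\Delta}}(z)\,dz,
\end{equation*}
which is independent of $t$ and is precisely a polynomial tilt of the $\alpha$-stable density. Applying Theorem~\ref{theorem:PK} then yields, for every $t\in(0,1]$,
\begin{equation*}
  \Law(Q_{1:\infty}\mid \TAlt=t)\;=\;\PK(\rho,\PAlt_t)\;=\;\int_{\mathbb{R}_+}\PK(\rho\mid s)\,\PAlt_t(ds).
\end{equation*}

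The final step is to identify this PK mixture as $\PD(\alpha,\theta)$. This is exactly the construction of the two-parameter Pitman-Yor process from the normalized $\alpha$-stable subordinator via polynomial tilting of the total mass; the identification is established in \citep{Pitman:Yor:1997} and stated in the form we need by \citet{Pitman:2003:1}. I expect the real substance of the proof to reduce to (i) verifying the integrability range $\theta>-\alpha$ and (ii) a clean citation of the Pitman-Yor tilting identity; the rest is a direct application of Corollary~\ref{corollary:pk:general} and Theorem~\ref{theorem:PK}. No deeper obstacle is anticipated, since the heavy lifting has already been carried out in the general coupling theorem.
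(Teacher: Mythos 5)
Your proposal is correct and follows essentially the same route as the paper: apply \cref{corollary:pk:general} with $h(s)=s^{-\theta}$ to obtain $\PAlt_t(dz)\propto z^{-\theta}f_{\mass{\Delta}}(z)\,dz$, then identify $\PK(\rho,\PAlt_t)$ with $\PD(\alpha,\theta)$ via the Pitman--Yor polynomial-tilting characterization as cited in \citep{Pitman:2003:1}. Your additional check that $\mean{\mass{\Delta}^{-\theta}}<\infty$ exactly on the range $\theta>-\alpha$ is a worthwhile detail that the paper leaves implicit.
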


\section{Power laws}
\label{sec:BFRY}

\def\BFRY{\text{\rm BFRY}}
\def\PBFRY{\text{\rm Poisson-BFRY}}

This section constructs a model that is not a $\JOT$ model, but does satisfy the
desiderata in \cref{sec:introduction}, and for which the distribution of $K_n$
is a power law and can be obtained explicitly.

\subsection{Motivation: The two-parameter Poisson-Dirichlet}
One way to construct the two-parameter
Poisson-Dirichlet distribution $\PD(\alpha,\theta)$, loosely based on
Proposition 21 in \citep{Pitman:Yor:1997}, is as follows:
Let $(\Delta_k)$ be the ranked jumps of an $\alpha$-stable subordinator, and ${(\tilde{\Delta}_k)}$
their size-biased permutation. Let ${Y:=E/\tilde{\Delta}_1}$, where $E$ is an independent standard exponential variable.
The sequence ${(\tilde{\Delta}_2,\tilde{\Delta}_3,\ldots|Y=y)}$
is then distributed as the jumps of a subordinator with \Levy density 
\begin{equation}
  \label{eq:BFRY:generalized:gamma:density}
  \rho_{\alpha,y}(s)=cs^{-\alpha-1}e^{-y^{\alpha}s}\;,
\end{equation}
which is a generalized gamma process.
Let ${T:=\tilde{\Delta}_2+\tilde{\Delta}_3+\ldots}$ be its total mass, and mix
the normalized process ${(\tilde{\Delta}_k/T)_{k\geq 2}}$ against a gamma distribution:
If ${\zeta\sim\GammaDist(\theta/\alpha,1)}$, for ${\theta>0}$, then
\begin{equation}
  \Bigl(\frac{\tilde{\Delta}_2}{T},\frac{\tilde{\Delta}_3}{T},\ldots\;\Big\vert\;Y=\zeta^{1/\alpha}\Bigr)
  \sim \text{PD}(\alpha,\theta)\;.
\end{equation}

%% Additionally, one has
%% \begin{equation}
%%   Y\equdist \frac{1}{\Delta_1}\;.
%% \end{equation}
%% Note this is true only marginally---$Y$ is coupled to $\tilde{\Delta}_1$, rather than $\Delta_1$.

Observe that the variable $\zeta$, when 
substituted into \eqref{eq:BFRY:generalized:gamma:density}, acts
as a random scaling factor \emph{on the \Levy density} of the process. That suggests 
a similar construction for a unitary random measure:
Choose a \Levy density $\rho$ supported on $[0,1]$ and a positive scalar random variable $\zeta$,
and define a homogeneous random measure as 
\begin{equation}
  \label{eq:BFRY:random:measure}
  (J_k)\sim \text{Subordinator}(\zeta\rho)
  \qquad\text{ and }\qquad
  \mu:=\sum_{k\in\mathbb{N}}J_k\delta_{U_k}\;.
\end{equation}
As already noted in \cref{sec:introduction}, $\mu$ is not generally a $\JOT$ measure, unless
$\rho$ is stable or scale-invariant. 

\subsection{A heavy-tailed case with tractable urn scheme}

For specific choice of $\zeta$, \eqref{eq:BFRY:random:measure} yields a
tractable model, with a simple urn scheme and a power law for $K_n$.
Consider a Bessel process
of dimension ${2(1-\sigma)}$, for some ${\sigma\in(0,1)}$, and denote by
$L_t$ the length of its excursion above 0 for which the excursion interval
contains the time $t$. Let $T$ be an independent standard exponential variable,
and define $\zeta$ as the randomization ${\zeta:=L_T}$. This variable
is studied extensively by \citet*{Bertoin:Fujita:Roynette:Yor:2006:1}, who show
it has the remarkably simple density
\begin{equation}
  \abstmeasure(\zeta\in dz)=\frac{\sigma}{\Gamma(1-\sigma)}z^{-\sigma-1}(1-e^{-z})dz\qquad\text{ for }\sigma\in(0,1)\;,
\end{equation}
and is hence of the form
\begin{equation}
  \zeta:=\frac{G}{\beta}\qquad\text{ where } G\sim\GammaDist(1-\sigma,1) \text{ and } \beta\sim\Beta(\sigma,1)\;.
\end{equation}
We refer to the law of this variable as $\BFRY(\sigma)$, following
\citep{Devroye:James:2014:1}. Clearly, $\zeta$ is heavy-tailed, 
with more mass in the tail as $\sigma$ decreases.

Let $\rho$ be a \Levy density on $[0,1]$, and define
a sequence $\phi_k$ and a function $f$ as
\begin{equation}
  \label{eq:BFRY:def:psi:k}
  \phi_k:=\int_0^1 (1-(1-s)^k)\rho(s)ds
  \qquad\text{ and }\qquad
  f(x,a,b):=(xa^b+(1-x)(1+a)^b)^{1/b}\;.
\end{equation}
Specifically, for a stable-beta density ${\rho(s)=\alpha s^{-\alpha-1}(1-s)^{\theta+\alpha-1}}$,
one obtains
\begin{equation*}
  \phi_k=\frac{\alpha\Gamma(\theta+\alpha+k-1)\Gamma(1-\alpha)}{\Gamma(\theta+k)}\;,
\end{equation*}
and the following urn scheme.
Let ${U_1,U_2,\ldots\simiid\Uniform[0,1]}$, and generate $\Z$ as follows: 
\begin{enumerate}
\item The first row has ${H_1\sim\Poisson(\zeta\phi_1)}$
  consecutive non-zero entries, 
  where ${\zeta\sim\BFRY(\sigma)}$.
\item
  In the $(n+1)$st row, each of the first $K_n$ entries 
  is selected independently with probability ${\frac{n_k-\alpha}{\theta+n}}$.
  Additionally, append
  \begin{equation}
    \label{eq:BFRY:sampling:scheme:aux:3}
    H_{n+1}|K_n\sim\Poisson(G_{n+1}\phi_{n+1} f(U_{n},{\textstyle\sum_{j<k}}\phi_j,\sigma-K_n))
  \end{equation}
  consecutive non-zero entries, where ${G_{n+1}\sim\GammaDist(1-\sigma+K_n)}$.
\end{enumerate}
\begin{remark}
  The only aspect specific to the stable-beta in the urn scheme above is the
  probability ${\frac{n_k-\alpha}{\theta+n}}$ in step (2). The urn scheme holds for
  any \Levy density $\rho$ on $[0,1]$, with $\phi_k$ computed as in \eqref{eq:BFRY:def:psi:k},
  if the probabilities ${\frac{n_k-\alpha}{\theta+n}}$ are replaced appropriately.
  As we show below, however, the choice ${\zeta\sim\BFRY(\sigma)}$ is essential: 
  It yields a closed form of $\Law(K_n)$.
\end{remark}

By definition, the variable ${K_1=H_1}$ has a mixed
Poisson-BFRY distribution. The same turns out to hold generally for all variables $K_n$,
and we write ${K\sim\PBFRY(\sigma,\tau)}$ if 
\begin{equation}
  K|\zeta\sim\Poisson(\tau\zeta) \qquad\text{ for }\qquad \zeta\sim\BFRY(\sigma)
  \qquad\text{ and }\qquad
  \tau>0\;.
\end{equation}
The main result regarding the urn scheme is the following:
\begin{proposition}
  \label{proposition:BFRY:matrix}
  Let $\Z$ be generated by the urn scheme above.
  \begin{enumerate}
  \item 
    $\Z$ is equivalently distributed to a random matrix 
    generated from a random measure $\mu$ according to 
    \eqref{eq:Bernoulli:sampling:basic}, where $\mu$ is the
    random measure \eqref{eq:BFRY:random:measure} and ${\zeta\sim\BFRY(\sigma)}$.
  \item 
    For each ${n\in\mathbb{N}}$, the total number $K_n$ of features selected in $\Z$
    has marginal distribution ${K_n\sim\PBFRY(\sigma,\sum_{j\leq n}\phi_j)}$.
    In particular, $K_n$ exhibits a power law.
  \end{enumerate}
\end{proposition}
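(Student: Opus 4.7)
The plan is to prove Part (1) by checking that the urn scheme reproduces the sequential predictive distribution of the hierarchical construction $\mu\mid\zeta\sim\text{CRM}(\zeta\rho)$, $\zeta\sim\BFRY(\sigma)$, $\Pi_{1:n}\mid\mu\simiid\BeP(\mu)$, and Part (2) by Poisson thinning on the atoms of $\mu$. The principal tools are \cref{result:posterior:xiAlt} and \cref{result:posterior:predictive}, applied conditionally on $\zeta$, together with an auxiliary-variable representation of the posterior of $\zeta$ that will manifest the Gamma factor $G_{n+1}$ and the uniform-driven function $f$ appearing in the urn scheme.

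For Part (1), I would first fix $\zeta$ and apply \cref{result:posterior:predictive} with Levy density $\zeta\rho$. The resulting probability that a previously observed atom $U_k^{\ast}$ reappears in $\Pi_{n+1}$ is
\[
\frac{\int_0^1 s^{n_k+1}(1-s)^{n-n_k}\rho(s)\,ds}{\int_0^1 s^{n_k}(1-s)^{n-n_k}\rho(s)\,ds},
\]
which is independent of $\zeta$; for the stable-beta density the Beta integrals collapse to $(n_k-\alpha)/(\theta+n)$. The number of new atoms is Poisson with rate $\zeta q_n$, where $q_n=\int_0^1 s(1-s)^n\rho(s)\,ds=\phi_{n+1}-\phi_n$ by the definition of $\phi_k$. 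Next I would marginalize $\zeta$: from \cref{result:posterior:xiAlt} the posterior of $\zeta\mid\Pi_{1:n}$ is proportional to $\zeta^{K_n-\sigma-1}(1-e^{-\zeta})e^{-\zeta\phi_n}$, and the identity $1-e^{-\zeta}=\int_0^1\zeta e^{-\zeta u}\,du$ augments this with a uniform latent $U$ to a joint density proportional to $\zeta^{K_n-\sigma}e^{-\zeta(\phi_n+U)}$ on $\mathbb{R}_+\times[0,1]$. Conditionally on $U$, $\zeta\sim\GammaDist(K_n+1-\sigma,\phi_n+U)$, i.e.\ $\zeta=G_{n+1}/(\phi_n+U)$ with $G_{n+1}\sim\GammaDist(K_n+1-\sigma,1)$; and the marginal density of $U$, proportional to $(\phi_n+U)^{-(K_n+1-\sigma)}$, admits an explicit inverse CDF, so that $\phi_n+U$ agrees in distribution with $f(U',\phi_n,\sigma-K_n)$ for a fresh uniform $U'$, with $f$ as in \eqref{eq:BFRY:def:psi:k}. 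Substituting these pieces into $\Poisson(\zeta(\phi_{n+1}-\phi_n))$ and tracking the parameterization reproduces the Poisson rate for $H_{n+1}$ announced in the urn scheme, and induction on $n$ yields equivalence of the joint law of $(\Pi_1,\ldots,\Pi_N)$.

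For Part (2), conditionally on $\zeta$ the atoms of $\mu$ form a Poisson process of intensity $\zeta\rho(s)\,ds$ on $(0,1]$. An atom of weight $J$ appears in at least one of the first $n$ rows with probability $1-(1-J)^n$ by independent Bernoulli selection, so Poisson thinning gives
\[
K_n\mid\zeta\sim\Poisson\!\Bigl(\int_0^1(1-(1-s)^n)\zeta\rho(s)\,ds\Bigr)=\Poisson(\zeta\phi_n),
\]
whence $K_n\sim\PBFRY(\sigma,\phi_n)$. The power law follows because $f_{\BFRY}(z)\sim (\sigma/\Gamma(1-\sigma))z^{-\sigma-1}$ as $z\to\infty$ yields $\abstmeasure(\zeta>z)\asymp z^{-\sigma}$, and the standard tail asymptotics for mixed Poisson distributions transfer this to $\abstmeasure(K_n\geq k)\asymp k^{-\sigma}$.

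The main obstacle is expected to be the explicit algebraic identification of the posterior-based rate $\zeta(\phi_{n+1}-\phi_n)$ with the closed-form urn rate $G_{n+1}\phi_{n+1}f(U_n,\cdot,\sigma-K_n)$: the Gamma factor drops out immediately from the conjugate posterior, but inverting the marginal CDF of $U$ and matching the resulting expression against the specific parameterization of $f$ requires careful bookkeeping, particularly because the exponent $\sigma-K_n$ changes sign as $K_n$ grows and the denominator $\phi_n+U$ must be recast in the $f$-representation displayed in the urn scheme.
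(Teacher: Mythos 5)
Your proposal is correct and follows essentially the same route as the paper: conditioning on $\zeta$ and applying the general posterior/predictive results, then exploiting the identity $1-e^{-z}=\int_0^1 z e^{-zu}\,du$ to write $\zeta\mid K_n$ as a Gamma variable times a bounded power-law variable (which is precisely the paper's $V_kG$ decomposition in its second lemma), and obtaining $K_n\mid\zeta\sim\Poisson(\zeta\phi_n)$ by Poisson thinning with the BFRY tail transferring to a power law for $K_n$. Your observation that the mixing rate is $\phi_n$ rather than $\sum_{j\le n}\phi_j$ (the latter only being consistent if the $\phi_j$ in the urn scheme denote the increments $\phi_j-\phi_{j-1}$) correctly identifies a notational wrinkle in the statement rather than a gap in your argument.
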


The proof of \cref{proposition:BFRY:matrix} follows from the properties of the $\PBFRY$ distribution, 
which are characterized by the following two lemmas.
For any ${\sigma\in(0,1)}$ and ${a,b>0}$, define the function
\begin{equation}
  p_{\sigma,a,b}(j,k):=\frac{\Gamma(k+j-\sigma)(a-b)^j}{j!\Gamma(-\sigma)}
  \frac{a^{-\sigma-j}-(1+a)^{\sigma-j}}{b^{\sigma-k}+(1+b)^{\sigma-k}}
\end{equation}
with arguments ${j,k\in\mathbb{N}\cup\lbrace 0\rbrace}$.
A straightforward computation shows:
\begin{lemma}
  The $\PBFRY(\sigma,\tau)$ law has mass function
  ${\abstmeasure(H=j)=p_{\sigma,\tau,0}(j,0)}$.
\end{lemma}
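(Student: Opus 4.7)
The proof is a direct integral computation, so the plan is to write out the mixing integral defining the $\PBFRY(\sigma,\tau)$ pmf and evaluate it in closed form. Concretely, since $H\mid\zeta\sim\Poisson(\tau\zeta)$ and $\zeta\sim\BFRY(\sigma)$,
\begin{equation*}
  \abstmeasure(H=j)
  \;=\;
  \int_0^\infty \frac{(\tau z)^j e^{-\tau z}}{j!}\,
  \frac{\sigma}{\Gamma(1-\sigma)} z^{-\sigma-1}(1-e^{-z})\,dz
  \;=\;
  \frac{\tau^j\sigma}{j!\,\Gamma(1-\sigma)}
  \int_0^\infty z^{\,j-\sigma-1}\bigl(e^{-\tau z}-e^{-(1+\tau)z}\bigr)\,dz\;.
\end{equation*}
So the task reduces to evaluating the last integral and reconciling the constants with $p_{\sigma,\tau,0}(j,0)$.

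For $j\geq 1$, the exponent $j-\sigma-1>-1$ makes the two summand integrals individually convergent, and each is a standard gamma integral: ${\int_0^\infty z^{j-\sigma-1}e^{-\alpha z}dz=\Gamma(j-\sigma)\alpha^{\sigma-j}}$. Substituting $\alpha=\tau$ and $\alpha=1+\tau$ and collecting terms gives
\begin{equation*}
  \abstmeasure(H=j)
  \;=\;
  \frac{\tau^j\,\sigma\,\Gamma(j-\sigma)}{j!\,\Gamma(1-\sigma)}
  \bigl(\tau^{\sigma-j}-(1+\tau)^{\sigma-j}\bigr)\;,
\end{equation*}
after which the reflection identity $\Gamma(1-\sigma)=-\sigma\,\Gamma(-\sigma)$ converts the prefactor into the $\Gamma(j-\sigma)/(j!\,\Gamma(-\sigma))$ form in the statement of $p_{\sigma,\tau,0}(j,0)$ (absorbing the resulting minus sign by swapping the two terms in the bracket).

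The only delicate step is the case $j=0$, since the integrand $z^{-\sigma-1}e^{-\tau z}$ by itself is not integrable at the origin. The fix is to keep the difference together and invoke the Frullani-type identity
\begin{equation*}
  \int_0^\infty z^{-\sigma-1}\bigl(e^{-az}-e^{-bz}\bigr)\,dz
  \;=\;
  -\Gamma(-\sigma)\bigl(a^{\sigma}-b^{\sigma}\bigr)\qquad(0<\sigma<1,\ a,b>0),
\end{equation*}
which follows from $\int_0^\infty z^{-\sigma-1}(1-e^{-\alpha z})dz=-\Gamma(-\sigma)\alpha^{\sigma}$ applied twice. Applying this with $a=\tau$, $b=1+\tau$ produces the same closed form as above with $j=0$, and the same use of $\Gamma(1-\sigma)=-\sigma\,\Gamma(-\sigma)$ matches $p_{\sigma,\tau,0}(0,0)$. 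Thus the main obstacle is purely bookkeeping: keeping track of the $j=0$ regularisation and the two minus signs introduced by $\Gamma(-\sigma)<0$ and by exchanging $\tau^{\sigma-j}$ with $(1+\tau)^{\sigma-j}$; once those are handled consistently, the identity $\abstmeasure(H=j)=p_{\sigma,\tau,0}(j,0)$ drops out in one line for every $j\geq 0$.
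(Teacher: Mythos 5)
Your computation is exactly the intended one: the paper offers no proof beyond ``a straightforward computation shows,'' and the mixing-integral evaluation---gamma integral for $j\ge 1$, Frullani-type regularisation of the difference of exponentials at $j=0$, and the reflection identity $\Gamma(1-\sigma)=-\sigma\Gamma(-\sigma)$---is that computation. One sign slip in your displayed identity: applying $\int_0^\infty z^{-\sigma-1}(1-e^{-\alpha z})\,dz=-\Gamma(-\sigma)\alpha^\sigma$ twice to $e^{-az}-e^{-bz}=(1-e^{-bz})-(1-e^{-az})$ gives $-\Gamma(-\sigma)\bigl(b^\sigma-a^\sigma\bigr)=\Gamma(-\sigma)\bigl(a^\sigma-b^\sigma\bigr)$, not $-\Gamma(-\sigma)\bigl(a^\sigma-b^\sigma\bigr)$; with that corrected your $j=0$ value $(1+\tau)^\sigma-\tau^\sigma$ does coincide with the formal $j=0$ evaluation of your $j\ge 1$ formula, as you assert, and your final expression is the correct reading of $p_{\sigma,\tau,0}(j,0)$ (the paper's definition of $p_{\sigma,a,b}$ contains typos: $a^{-\sigma-j}$ should be $a^{\sigma-j}$ and the $+$ in the denominator should be $-$, so that the $b=0$, $k=0$ denominator equals $-1$).
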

Next, consider a sequence of $\PBFRY(\sigma,\tau_n)$ variables, generated with different
parameters $\tau_n$, but coupled by a single $\BFRY$ variable $\sigma$. Partial sums of such
variables inherit additivity from the Poisson distribution; the benign properties
of the resulting conditionals account for the existence of the closed-form urn scheme above.
\begin{lemma}
  Let ${\psi_1,\psi_2,\ldots}$ be positive scalars with partial sums ${\tau_n:=\sum_{i\leq n}\psi_i}$.
  Consider random variables ${\zeta\sim\BFRY(\sigma)}$ and 
  ${H_1,H_2,\ldots}$ satisfying
  \begin{equation}
    H_n|\zeta\sim\Poisson(\zeta\psi_n)
    \qquad\text{ and }\qquad
    H_n\condind_{\zeta}(H_1,\ldots,H_{n-1})\;,
  \end{equation}
  Then the following holds:
  \begin{enumerate}
  \item The partial sum ${K_n=\sum_{i\leq n}H_i}$ has law 
    ${K_n\sim\PBFRY(\sigma,\tau_n)}$.
  \item For each $n$, 
    ${\Law(\zeta|H_1=h_1,\ldots,H_n=h_n)=\Law(\zeta|K_n=\sum_{i\leq n}h_i)}$, 
    with density
    \begin{equation}
      \abstmeasure(\zeta\in dz|K_n=k)
      =
      \frac{z^{k-\sigma-1}e^{-z\tau_n}(1-e^{-z})}
           {\Gamma(k-\sigma)(\tau_n^{\sigma-k}-(1+\tau_n)^{\sigma-k})}\;.
    \end{equation}
    Moreover, the conditional can be simulated as ${\zeta|(K_n=k)\equdist V_kG}$,
    for two random variables ${G\sim\GammaDist(k+1-\sigma,1)}$ and
    $V_k$ with density
    \begin{equation}
      \abstmeasure(V_k\in dv)=\frac{(k-\sigma)v^{k-\sigma-1}}{\tau_n^{\sigma-k}-(1+\tau_n)^{\sigma-k}}dv
      \qquad\text{ on }\quad 
      [(\tau_n+1)^{-1},\tau_n^{-1}]\;.
    \end{equation}
  \item The variable ${H_{n+1}|K_n}$ has mass function
    ${\abstmeasure(H_{n+1}=j|K_n=k)=p_{\sigma,\tau_{n+1},\tau_n}(j,k)}$.
 \end{enumerate}
\end{lemma}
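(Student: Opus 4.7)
The plan is to prove the three parts in order, relying throughout on the conditional independence ${H_n \condind_{\zeta} (H_1,\ldots,H_{n-1})}$ and on the BFRY density ${p_{\zeta}(z) = \frac{\sigma}{\Gamma(1-\sigma)} z^{-\sigma-1}(1-e^{-z})}$.

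For part (1), I would use Poisson additivity: conditionally on $\zeta$, the $H_i$ are independent Poissons with means $\zeta\psi_i$, so ${K_n = \sum_{i\leq n}H_i \mid \zeta \sim \Poisson(\zeta \tau_n)}$. By the very definition ${K_n \sim \PBFRY(\sigma,\tau_n)}$.

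For part (2), I would first observe that the conditional likelihood of $(H_1,\ldots,H_n)$ given $\zeta$ factors as
\begin{equation*}
\prod_{i=1}^n \frac{(\zeta\psi_i)^{h_i}e^{-\zeta\psi_i}}{h_i!}
= \zeta^{k}e^{-\zeta\tau_n}\cdot \prod_{i=1}^n \frac{\psi_i^{h_i}}{h_i!},\qquad k:=\sum h_i,
\end{equation*}
so by the factorization criterion $K_n$ is sufficient for $\zeta$, giving ${\Law(\zeta\mid H_1,\ldots,H_n)=\Law(\zeta\mid K_n)}$. Then Bayes' rule yields the posterior density proportional to ${z^{k-\sigma-1}e^{-z\tau_n}(1-e^{-z})}$; the normalizer is computed by splitting ${(1-e^{-z})}$ and applying ${\int_0^\infty z^{k-\sigma-1}e^{-za}dz = \Gamma(k-\sigma)a^{\sigma-k}}$ at ${a=\tau_n}$ and ${a=\tau_n+1}$, producing the stated density. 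For the decomposition ${\zeta\mid(K_n=k)\equdist V_k G}$, the key trick is writing ${1-e^{-z} = z\int_0^1 e^{-zu}du}$, so that the posterior becomes a mixture of gamma densities:
\begin{equation*}
\abstmeasure(\zeta\in dz\mid K_n=k) \propto \int_0^1 z^{k-\sigma}e^{-z(\tau_n+u)}\,du\,dz.
\end{equation*}
Conditionally on an auxiliary variable $U$ with density proportional to ${(\tau_n+u)^{-(k+1-\sigma)}}$ on $[0,1]$, the variable $\zeta$ is then $\Gamma(k+1-\sigma,\tau_n+U)$, i.e.\ equidistributed with ${G/(\tau_n+U)}$ for ${G\sim\GammaDist(k+1-\sigma,1)}$ independent of $U$. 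Setting ${V_k:=1/(\tau_n+U)}$ and changing variables transports $U$'s density into the density stated for $V_k$ on ${[(\tau_n+1)^{-1},\tau_n^{-1}]}$; verifying this change of variables is the one slightly finicky step but is an elementary Jacobian computation.

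For part (3), I would use the tower property:
\begin{equation*}
\abstmeasure(H_{n+1}=j\mid K_n=k)
= \frac{\psi_{n+1}^j}{j!}\,\mathbb{E}\!\left[\zeta^{j}e^{-\psi_{n+1}\zeta}\,\middle|\,K_n=k\right],
\end{equation*}
using conditional independence to replace ${H_{n+1}\mid(\zeta,K_n)}$ by ${H_{n+1}\mid\zeta\sim\Poisson(\psi_{n+1}\zeta)}$. Substituting the posterior density from part (2), setting ${\psi_{n+1}=\tau_{n+1}-\tau_n}$, and evaluating the remaining integral with the same $\Gamma$-identity as above yields an expression in terms of the four factors ${(\tau_{n+1}-\tau_n)^j}$, ${\Gamma(k+j-\sigma)/\Gamma(-\sigma)}$, ${\tau_{n+1}^{\sigma-k-j}-(1+\tau_{n+1})^{\sigma-k-j}}$, and ${\tau_n^{\sigma-k}-(1+\tau_n)^{\sigma-k}}$, which is exactly ${p_{\sigma,\tau_{n+1},\tau_n}(j,k)}$.

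The main obstacle is bookkeeping rather than conceptual: keeping track of the two boundary terms arising from ${1-e^{-z}}$ throughout all three parts, and correctly handling the sign/index conventions in the $p_{\sigma,a,b}(j,k)$ formula so that part (3) matches the stated mass function exactly. Everything else reduces to Poisson additivity, Bayes' rule, and the elementary identity ${1-e^{-z}=z\int_0^1 e^{-zu}du}$.
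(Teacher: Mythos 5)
Your overall strategy is sound and, carried out carefully, it does prove the lemma; the paper omits this proof entirely, and the route you describe (Poisson additivity for (1), the factorization criterion plus Bayes' rule and the identity ${1-e^{-z}=z\int_0^1e^{-zu}\,du}$ for (2), the tower property for (3)) is the natural computation. One technical caveat in part (2): for ${k=0}$ the two integrals ${\int_0^\infty z^{-\sigma-1}e^{-z\tau_n}dz}$ and ${\int_0^\infty z^{-\sigma-1}e^{-z(\tau_n+1)}dz}$ each diverge at the origin, so the ``split ${1-e^{-z}}$ and apply the Gamma integral'' step is not literally valid there; you need the identity
\begin{equation*}
\int_0^\infty z^{s-1}\bigl(e^{-az}-e^{-bz}\bigr)\,dz=\Gamma(s)\bigl(a^{-s}-b^{-s}\bigr),\qquad -1<s<0,
\end{equation*}
valid by analytic continuation (or one integration by parts), which is what makes the stated normalizer positive and finite at ${k=0}$. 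The auxiliary-variable argument for ${\zeta|(K_n=k)\equdist V_kG}$ and the Jacobian computation for the density of $V_k$ both check out.

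The more substantive issue is part (3). Carrying out the integral you set up gives
\begin{equation*}
\abstmeasure(H_{n+1}=j\mid K_n=k)
=\frac{\Gamma(k+j-\sigma)\,\psi_{n+1}^{\,j}}{j!\,\Gamma(k-\sigma)}\cdot
\frac{\tau_{n+1}^{\sigma-k-j}-(1+\tau_{n+1})^{\sigma-k-j}}{\tau_n^{\sigma-k}-(1+\tau_n)^{\sigma-k}}\;,
\end{equation*}
which one can verify sums to one over $j$ and reduces correctly to the first lemma's mass function. This is \emph{not} literally the displayed ${p_{\sigma,a,b}(j,k)}$: the printed definition has ${\Gamma(-\sigma)}$ where the computation produces ${\Gamma(k-\sigma)}$, exponents ${-\sigma-j}$ and ${\sigma-j}$ where it produces ${\sigma-k-j}$, and a plus sign in ${b^{\sigma-k}+(1+b)^{\sigma-k}}$ where it produces a minus. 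The printed formula is internally inconsistent---it does not even recover the first lemma's mass function at ${b=0}$, ${k=0}$ without these corrections---so these are typographical errors in the statement rather than flaws in your argument. But your closing claim that the computation ``is exactly ${p_{\sigma,\tau_{n+1},\tau_n}(j,k)}$'' cannot be checked against the formula as printed, and the factor ${\Gamma(k+j-\sigma)/\Gamma(-\sigma)}$ you list is not what your own integral yields. You should state the corrected formula explicitly and note the discrepancy, rather than deferring it to ``sign/index conventions.''
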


\section{Distributions of row and column sums}
\label{sec:tv}

We conclude by considering approximations of the row and column sums,
measures in terms of the total variation distance $\dtv$ between probability
measures. 

Let ${\RS_i:=\sum_k \Z_{ik}}$ be the sum of the $i$th row. Since 
${\mass{\xi}}$ if almost surely finite, so is $\RS_i$. 
If $\xi$ is a homogeneous CRM, 
${\RS_i\sim\Poisson(\mean{\mass{\xi}})}$ by \cref{result:BeP:Poisson}.
If $\xi$ is not completely random, one may still approximate it by a 
Poisson variable; the expected error can be related to the first
size-biased weight of $\xi$:
\begin{proposition}
  \label{result:BP:bound}
  Let ${\xi\equdist\sum_kW_k\delta_{U_k}}$ be a unitary random measure, and 
  $\Z$ a random matrix generated from $\xi$ as in \eqref{eq:Bernoulli:sampling:basic}.  
  Suppose $\RS_i$ is approximated by a Poisson variable with mean $\mass{\xi}$,
  and measure approximation error as
  ${E_{\xi}:=\dtv\bigl(\Law(\RS_i\vert\xi) - \Poisson(\mass{\xi})\bigr)}$.
  Then ${E_{\xi}|\xi<\sum_k W_k^2}$, and the expected error is bounded as
    \begin{equation}
      \mean{E_{\xi}} < \mean{\mass{\xi}\tilde{W}_1}\;,
    \end{equation}
    where ${(\tilde{W}_1,\tilde{W}_2,\ldots)}$ is a size-biased permutation of the
    weight sequence $(W_k)$.
\end{proposition}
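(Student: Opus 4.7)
The plan is to condition on $\xi$ so that the indicators become independent Bernoullis, invoke Le Cam's Poisson approximation pathwise, and then recognize the resulting bound via the defining property of size-biasing. Given $\xi\equdist\sum_k W_k\delta_{U_k}$, the entries ${\Z_{i1},\Z_{i2},\ldots}$ are conditionally independent with ${\Z_{ik}\sim\Bernoulli(W_k)}$, and $\RS_i=\sum_k\Z_{ik}$ is a countable sum of independent Bernoullis whose parameters sum to the finite mass ${\mass{\xi}<\infty}$ by the unitarity condition \eqref{eq:unitary}.

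The classical Le Cam--Chen--Stein inequality asserts that for independent ${B_k\sim\Bernoulli(p_k)}$ with ${\lambda:=\sum_k p_k<\infty}$,
\begin{equation}
  \dtv\bigl(\Law({\textstyle\sum_k} B_k),\Poisson(\lambda)\bigr)\;\le\;\sum_k p_k^2 \;.
\end{equation}
Applied pathwise to the conditional law given $\xi$, with ${p_k=W_k}$ and ${\lambda=\mass{\xi}}$, this yields ${E_\xi\le \sum_k W_k^2}$ almost surely. Extension from finite to countable families is routine and follows by truncating to the first $N$ summands, applying the finite version, and letting ${N\to\infty}$: the tail $\sum_{k>N}W_k$ of $\mass{\xi}$ vanishes almost surely, so both the sum law and the Poisson law converge in total variation. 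The strict inequality is inherited from the Barbour--Hall refinement ${\dtv\le (1\wedge\lambda^{-1})\sum_k p_k^2}$, which is strict whenever at least one $p_k$ lies in $(0,1)$ and $\mass{\xi}>0$.

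For the expected bound, I would use the defining property of the size-biased permutation: $\tilde{W}_1$ is drawn from $(W_k)$ with conditional probability ${W_k/\mass{\xi}}$, so for any measurable ${g\ge 0}$ one has
\begin{equation}
  \bigmean{\textstyle\sum_k W_k\,g(W_k)}
  \;=\;\bigmean{\mass{\xi}\,g(\tilde{W}_1)}\;.
\end{equation}
Specialising to ${g(w)=w}$ gives ${\mean{\sum_k W_k^2}=\mean{\mass{\xi}\,\tilde{W}_1}}$, which, combined with the pathwise estimate, yields ${\mean{E_\xi}<\mean{\mass{\xi}\,\tilde{W}_1}}$.

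The main obstacle is conceptually minor but technically finicky: making the Le Cam estimate rigorous for a countably infinite collection of Bernoullis and extracting the strict inequality. Once the pathwise bound ${E_\xi\le\sum_k W_k^2}$ is established, the size-biasing identity is mechanical, and it is precisely this identity that converts a pathwise second-moment bound into the structural quantity ${\mean{\mass{\xi}\,\tilde{W}_1}}$ appearing in the statement.
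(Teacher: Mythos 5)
Your proposal is correct and follows essentially the same route as the paper: a pathwise application of Le Cam's inequality to the conditionally independent Bernoulli entries, followed by the size-biasing identity to convert $\mean{\sum_k W_k^2}$ into $\mean{\mass{\xi}\tilde{W}_1}$. The only cosmetic difference is that the paper routes the size-biasing step through the normalized weights $\tilde{V}_k=\tilde{W}_k/\mass{\xi}$ and the identity $\mean{g(\tilde{V}_1)}=\mean{\sum_k\tilde{V}_k g(\tilde{V}_k)}$, whereas you use the equivalent unnormalized form directly.
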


Now consider the $k$th column. Since the column entries are conditionally \iid 
with mean $W_k$, the behavior of column sums 
reduces to that of the weights $W_k$. The next result provides a 
tool that relates these weights to those of a random probability
measure, along the lines of \cref{theorem:PK}. The bound considers
only those weights not exceeding a given size $\beta$; 
all larger weights are discarded from the total mass, by defining
\begin{equation}
  \tau_{\beta}(x):=\sum_{i\in\mathbb{N}}x_i\indicator{x_i\leq \beta}
  \qquad\text{ for }\beta>0 
\end{equation}
for a point set ${x=\lbrace x_1,x_2,\ldots\rbrace}$ in $\mathbb{R}_+$. 
Comparing the laws of the remaining jumps using total variation distance
then reduces to comparing the scalar variables $\tau_{\beta}$ to its conditional
given the total mass:
\begin{proposition}
  \label{result:PK:sufficiency}
  Let ${X=\lbrace X_1,X_2,\ldots\rbrace}$ be the set of jump heights of a subordinator with 
  with \Levy density $\rho$, and require that its total mass ${T=\sum X_k}$ a density $f_T$ 
  on $\mathbb{R}_+$. Let 
  ${V=\lbrace V_1,V_2,\ldots\rbrace}$ be the weights of a random 
  probability measure with law ${\PK(\rho,\PAlt_t)}$ as in \eqref{eq:PK:mixing:measure},
  for some ${t\in(0,1]}$. Then
  \begin{equation}
    \label{eq:total:variation}
    \dtv(\Law(V\cap[0,\beta]),\Law(X\cap[0,\beta]))
    =
    \dtv(\Law(\tau_{\beta}(X)|T=t),\Law(\tau_{\beta}(X))) 
  \end{equation}
  holds for all ${\beta\in[0,1]}$.
\end{proposition}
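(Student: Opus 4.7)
The plan is to reduce the stated identity to a general mixture argument, after recognizing the left-hand side as the TV distance between the conditional and unconditional laws of the small jumps $X \cap [0,\beta]$ of the subordinator. The first step is to use \cref{theorem:PK} to couple $V$ with $X$: the weights of a random probability measure with law $\PK(\rho,\PAlt_t)$ can be realized from the jumps of $X$ conditioned on $T = t$, the only difference being a deterministic rescaling by the total mass that does not affect the point-process structure on $[0,\beta]$ relevant to the left-hand side. This reduces the LHS to $\dtv\bigl(\Law(X \cap [0,\beta] \mid T = t),\, \Law(X \cap [0,\beta])\bigr)$.

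Next, I would exploit the Poisson restriction property. The point processes $X^{<\beta} := X \cap [0,\beta]$ and $X^{>\beta} := X \cap (\beta,\infty)$ are independent Poisson processes, so $T = \tau_\beta(X) + S_\beta$ decomposes into independent summands, where $\tau_\beta(X)$ is measurable with respect to $X^{<\beta}$ alone and $S_\beta := \sum_{X_i > \beta} X_i$ is measurable with respect to $X^{>\beta}$. This yields the two mixture representations
\begin{align*}
  \Law(X^{<\beta} \mid T = t) &= \int K_u \, d\Law(\tau_\beta(X) \mid T = t)(u), \\
  \Law(X^{<\beta}) &= \int K_u \, d\Law(\tau_\beta(X))(u),
\end{align*}
against the common kernel $K_u := \Law(X^{<\beta} \mid \tau_\beta(X) = u)$. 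Since each $K_u$ is concentrated on configurations in $[0,\beta]$ of total mass exactly $u$, the family $\{K_u\}_{u \ge 0}$ is mutually singular, with a measurable selector provided by $\tau_\beta$ itself. A standard lemma then gives that the TV distance between two mixtures against a mutually singular family of kernels equals the TV distance between the mixing measures, yielding the claimed identity.

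The main obstacle is the coupling step: carefully justifying the identification between $V \cap [0,\beta]$ (weights of a probability measure with law $\PK(\rho,\PAlt_t)$) and $X \cap [0,\beta] \mid T = t$. This requires tracking how the normalization by the total mass and the mixing $\PAlt_t$ from \eqref{eq:PK:mixing:measure} combine to recover the conditional law of the small jumps of the unnormalized subordinator; equivalently, one has to verify that the choice of $\PAlt_t$ in \cref{theorem:PK} acts only on the large-jump component and leaves the conditional law of the small jumps unchanged once $T$ is fixed. Once this is in hand, the remainder of the argument relies only on the independence structure of Poisson point processes and the standard fact about mixtures of mutually singular kernels.
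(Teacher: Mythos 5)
Your proposal is correct and follows essentially the same route as the paper: both reduce the left-hand side to $\dtv\bigl(\Law(X\cap[0,\beta]\,\vert\,T=t),\Law(X\cap[0,\beta])\bigr)$ via \cref{theorem:PK}, and both then show that $\tau_\beta$ carries all the dependence on $T$, using the independence of the restrictions of the Poisson process to $[0,\beta]$ and $(\beta,\infty)$ (the paper phrases this as the Markov property of $\beta\mapsto\tau_\beta(X)$). Your mixture-of-mutually-singular-kernels lemma is exactly the sufficiency lemma of Arratia, Barbour and Tavar\'e that the paper invokes, so the two arguments differ only in packaging.
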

The bound \eqref{eq:total:variation} generalizes a result
for the scale-invariant Poisson process obtained in 
\citep{Arratia:Barbour:Tavare:1999:1}.
As \cref{theorem:PK}, conditioning on ${T\leq t=1}$ on the right-hand side of 
\eqref{eq:total:variation} is essential; for arbitrary values of $T$, the result does
not generally hold.

\vspace{1cm}
{\noindent\textbf{Acknowledgments.}
  LFJ was supported in
  part by grant RGC-HKUST 601712 of the \mbox{HKSAR}.
  PO was supported in part by grant FA9550-15-1-0074 of AFOSR.
  YWT's research leading to these results has received funding from the
  European Research Council under the European Union's Seventh Framework
  Programme (FP7/2007-2013) ERC grant agreement no. 617071.
}

\newpage
\appendix

\section*{Proofs}

All results summarized in \cref{sec:introduction} follow from results in later sections, with the
exception of \cref{theorem:sb}.

\begin{proof}[Proof of \cref{theorem:sb}]
  Following \citet[][Appendix 1]{Kingman:1975}, we note $\Delta_1$ has density
  \eqref{eq:density:largest:jump}, so its cdf
  is ${F_{\Delta_1}(s)=e^{-\Lambda(s)}}$, and ${e^{-\Lambda(\Delta_1)}}$ is a uniform variable. Hence,
  ${\Lambda(\Delta_1)\equdist E_1}$, where $E_1$ is a unit-rate exponential.
  Now apply a similar argument conditionally for ${k>1}$: The ranked jumps
  of a subordinator have the Markov property
  \begin{equation}
    \label{eq:proof:stable:sb:aux:1}
    \Delta_k|\Delta_{k-1}=y,\Delta_{k-2},\ldots,\Delta_1
    \;\;\equdist\;\;
    \Delta_k|\Delta_{k-1}=y\;.
  \end{equation}
  As ${\Delta_{2:\infty}|(\Delta_1=y)}$ is a subordinator with \Levy
  density ${\rho(s)\indicator{s\leq y}}$, the conditional density and cdf of its largest jump
  $\Delta_2$ are, respectively,
  \begin{equation}
    \label{eq:proof:stable:sb:aux:2}
    f(s|y)=\rho(s)e^{-(\Lambda(s)-\Lambda(y))}
    \qquad\text{ and }\qquad
    F(s|y)=e^{-(\Lambda(s)-\Lambda(y))}
    \qquad\text{ for }s\leq y\;.
  \end{equation}
  Iterating the argument shows the conditional density and cdf of
  ${\Delta_k|\Delta_{k-1}=y}$ are again given by \eqref{eq:proof:stable:sb:aux:2}.
  By \eqref{eq:proof:stable:sb:aux:1}, the variable ${\Delta_k|\Delta_{k-1}=y,\Delta_{k-2},\ldots,\Delta_1}$
  hence has conditional cdf ${F(s,y)}$.
  The variables ${M_k:=\Lambda^{-1}(E_1+\ldots+E_{k})}$
  therefore have distribution ${(M_k)\equdist(\Delta_k)}$. Conditioning on ${\Delta_1=a}$, and
  hence on ${E_1=\Lambda(a)}$, yields \eqref{eq:theorem:sb:stable:dist:identity}.
\end{proof}

\subsection{Proofs for \cref{sec:construction}}

For the proof of \cref{result:BeP:Poisson}, 
recall the Laplace functional of a random measure $\mu$, for a non-negative 
Borel function $g$, is defined as ${\mean{e^{-\mu(g)}}}$, and $\mu$ is Poisson iff
\begin{equation}
  \label{eq:poisson:laplace}
  \mean{e^{-\mu(g)}}=e^{-(\mean{\mu})(1-e^{-g})}\;,
\end{equation}
see e.g.\ \citep[][Lemma 12.2]{Kallenberg:2001}.
\begin{proof}[Proof of \cref{result:BeP:Poisson}]
  We show that the random measure $\Pi$ satisfies \eqref{eq:poisson:laplace} if $\mu$ is a CRM.
  The definition in \eqref{eq:def:BeP} implies ${\mean{\Pi}=\mean{\mu}}$, and since $\mu$ is a
  homogeneous CRM, ${\mean{\Pi(\argdot)}=\mean{\mu(\atomspace)}G(\argdot)}$ then follows as claimed.

  Conditionally on the weights $W_k$ 
  of ${\mu=\sum_{k\in\mathbb{N}} W_k\delta_{U_k}}$, the variables $Z_k$ in \eqref{eq:def:BeP} are independent
  Bernoulli, each with expectation $W_k$, 
  hence
  \begin{equation}
    \begin{split}
    \mean{e^{-\Pi(g)}|(W_k)}
    =
    \prod_{k} \mean{e^{-Z_kg(X_k)}|(W_k)}
    %= &
    %\prod_k \mean{(1-B_k)+B_ke^{-g(X_k)}|(W_k)}
    %\\
    = % &
    \prod_k ((1-W_k)+W_k\mean{e^{-g(X_1)}})\;.
    \end{split}
  \end{equation}
  If we abbreviate ${h(s):=-\log((1-s)+s\mean{e^{-g(X_1)}})}$, we obtain
  \begin{equation}
    \mean{e^{-\Pi(g)}|(W_k)}
    =
    e^{-\sum_k h(W_k)}\;.
  \end{equation}
  The exponent on the right can be written as
  ${\sum h(W_k)=N(h)}$,
  where ${N:=\sum\delta_{W_k}}$.
  The Laplace functional of $\Pi$ at $g$ then coincides with that of $N$ at $h$:
  \begin{equation}
    \mean{e^{-\Pi(g)}}
    =
    \mean{\mean{e^{-\Pi(g)}|(W_k)}}
    =
    \mean{e^{-N(h)}}\;
  \end{equation}
  Since $(W_k)$ are the jumps of a homogeneous CRM, $N$ is a Poisson random measure with expectation 
  ${\mean{N(ds)}=\rho(s)ds}$. By \eqref{eq:poisson:laplace}, its Laplace functional at $h$ is
  \begin{equation}
    \label{aux:proof:BP:poisson:1}
    \mean{e^{-N(h)}}=\mean{e^{-\int (1-e^{-h(s)})\rho(s)ds}}\;.
  \end{equation}
  Since 
  \begin{equation}
    1-e^{-h(s)}=s(1-\mean{e^{-g(U_1)}})= s\int_{\atomspace} 1-e^{-g(u)}G(du)\;,
  \end{equation}
  the integral in the exponent of \eqref{aux:proof:BP:poisson:1} is
  \begin{equation}
    \int_0^{\infty} (1-e^{-h(s)})\rho(s)ds
    =     
    \int_0^{\infty} (s\int_{\atomspace} 1-e^{-g(u)}G(du))\rho(s)ds
    =
    \mean{\mu(\atomspace)}G(1-e^{-g})\;.
  \end{equation}
  Hence, ${\mean{e^{-\Pi(g)}}=\mean{e^{-\mean{\mu(\atomspace)} G(1-e^{-g})}}}$, and $\Pi$ is 
  indeed Poisson by \eqref{eq:poisson:laplace}.
\end{proof}

\begin{proof}[Proof of \cref{result:posterior:xiAlt}]
  By definition, $\xiAlt$ is completely random given $\DeltaAlt$,
  which implies \eqref{eq:posterior:rm}, and the conditional distributions
  of $\xiAlt_n$ and the conditional weights $J^{\ast}_{nk}$ in \eqref{eq:posterior:density:conditional}
  and \eqref{eq:posterior:jump:distribution} can be derived from
  the results of \citet{Kim:1999:1}.
  To obtain the conditional \eqref{eq:posterior:DeltaAlt} of $\DeltaAlt$, we note that
  all information in a sample ${\Pi_1,\ldots,\Pi_n}$ relevant to conditioning
  is summarized by the $K_n$ atom locations $U^{\ast}_k$ and their multiplicities $n_k$.
  The likelihood of observing the sample given ${\DeltaAlt=a}$ is
  \begin{equation}
    \abstmeasure(du_{1:K_n}^{\ast},n_{1:K_n},n|\DeltaAlt=a)
    =
    e^{-\psi_n(a)}\prod_{k=1}^{K_n}c_a(n,n_k)G(du^{\ast}_k)\;.
  \end{equation}
  If $\DeltaAlt$ has density $f_{\DeltaAlt}$, Bayes' theorem hence yields
  \eqref{eq:posterior:DeltaAlt}.
\end{proof}

\begin{proof}[Proof of \cref{lemma:thinning}] 
  Let $G$ denote the law of $U_1$. Since ${(J_{k},B_{k},U_{k})}$ are the points of a 
  marked Poisson process, the Laplace transform 
  ${\mean{{\mbox e}^{-\mu_{h}(f)}}={\mbox e}^{-\psi(f)}}$ of $\mu_h$ is given by
  \begin{equation}
    \begin{split}
    \psi(f)
    =&
    \sum_{a=0}^{1}\int_{\atomspace}\int_{0}^{\infty}(1-{\mbox e}^{-af(y)s})h^{a}(s)[1-h(s)]^{1-a}\rho(s)dsG(du)\\
    =&
    \int_{\atomspace}\int_{0}^{\infty}(1-{\mbox e}^{-f(y)s})h(s)\rho(s)dsG(du)\;,
    \end{split}
  \end{equation}
  since ${(1-{\mbox e}^{-af(y)s})=0}$ for ${a=0}$.
\end{proof}

\subsection{Proofs for \cref{sec:PK}}

This section gives the proof of \cref{theorem:PK}, which we break down into a number of lemmas.
Let $f_{\Delta_1}$ be the density of $\Delta_1$, and define the function
\begin{equation}
  g_{\rho}(a,t)=a\rho(a)f_{\Delta_1}(at)\;.
\end{equation}
We first note the following simple form for the joint density of ${(\Delta_1,\mass{\xi})}$,
which is valid only if $\mass{\xi}$ takes values in $[0,1]$;
outside this range, the expressions become considerably more complicated. 
\begin{lemma}
  \label{lemma:densities}
  Let ${\xi\sim\JOT(\rho)}$ for a \Levy density $\rho$.
  Then ${f_{\Delta_1,\mass{\xi}}(a,t)=g_\rho(a,t)}$ for ${t\leq 1}$.
\end{lemma}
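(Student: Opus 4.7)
The plan is to compute the joint density by passing through the joint density of $(\Delta_1,\mass{\Delta})$, where ${\mass{\Delta}:=\sum_k\Delta_k}$ is the total mass of the underlying subordinator, and then convert to $(\Delta_1,\mass{\xi})$ via the obvious relation ${\mass{\xi}=\mass{\Delta}/\Delta_1-1}$. Conditionally on ${\Delta_1=a}$, the remaining jumps $\Delta_2,\Delta_3,\ldots$ form an independent subordinator with \Levy density ${\rho(\cdot)\indicator{\cdot\leq a}}$; call its total mass $S_a$. Combining with the Kingman formula \eqref{eq:density:largest:jump} for $f_{\Delta_1}(a)=\rho(a)e^{-\Lambda(a)}$ immediately yields
\begin{equation*}
  f_{\Delta_1,\mass{\Delta}}(a,u)=\rho(a)e^{-\Lambda(a)}f_{S_a}(u-a)
  \qquad\text{for }u\geq a.
\end{equation*}

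The substantive step is the identity ${f_{\mass{\Delta}}(y)=e^{-\Lambda(a)}f_{S_a}(y)}$ on ${y\leq a}$. To see it, I would split the underlying Poisson random measure of jumps into its independent restrictions to $(0,a]$ and $(a,\infty)$. The first has total mass $\equdist S_a$; the second, if non-empty, contributes at least $a$. Hence, for ${y\leq a}$, the event ${\{\mass{\Delta}\leq y\}}$ forces the restriction to $(a,\infty)$ to be empty, an event of probability $e^{-\Lambda(a)}$, under which ${\mass{\Delta}\equdist S_a}$. This gives ${\abstmeasure(\mass{\Delta}\leq y)=e^{-\Lambda(a)}\abstmeasure(S_a\leq y)}$ on ${y\leq a}$, and differentiating produces the claim.

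Substituting the identity at $y=u-a$ into the previous display, valid precisely when ${u\leq 2a}$, collapses the joint density into the clean form ${f_{\Delta_1,\mass{\Delta}}(a,u)=\rho(a)f_{\mass{\Delta}}(u-a)}$. The final change of variables ${u=a(1+t)}$ has Jacobian $a$ and sends ${\{t\leq 1\}}$ bijectively onto ${\{u\leq 2a\}}$, so
\begin{equation*}
  f_{\Delta_1,\mass{\xi}}(a,t)=a\rho(a)f_{\mass{\Delta}}(at)=g_\rho(a,t)
  \qquad\text{for }t\leq 1.
\end{equation*}

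The main obstacle is the small-mass identity in the second paragraph: it is genuinely a sub-critical phenomenon, valid only on ${y\leq a}$ where jumps above $a$ are incompatible with the event. The restriction ${t\leq 1}$ in the lemma is exactly what ensures we land in this range, and also explains the paper's remark that the formula becomes considerably more complicated outside it, where extra convolutional terms from large-jump contributions reappear. The rest is routine Palm-style bookkeeping and a single Jacobian.
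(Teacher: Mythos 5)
Your proof is correct, and it takes a genuinely different route from the paper's. The paper gets the joint density of $(\Delta_1,\mass{\xi})$ by quoting Proposition~45 of \citet{Pitman:Yor:1997} for the joint density of the total mass $\mass{\Delta}$ and the largest normalized jump ${V=\Delta_1/\mass{\Delta}}$, which takes the simple product form ${\tau\rho(\tau v)f_{\mass{\Delta}}(\tau-v\tau)}$ precisely on ${\lbrace V\geq 1/2\rbrace=\lbrace\mass{\xi}\leq 1\rbrace}$, and then changes variables. You instead rebuild that special case from first principles: you factor ${f_{\Delta_1,\mass{\Delta}}(a,u)=\rho(a)e^{-\Lambda(a)}f_{S_a}(u-a)}$ using Kingman's density \eqref{eq:density:largest:jump} and the truncated conditional subordinator ${\Delta_{2:\infty}\vert(\Delta_1=a)}$, and then prove the sub-critical identity ${f_{\mass{\Delta}}(y)=e^{-\Lambda(a)}f_{S_a}(y)}$ on ${y\leq a}$ by splitting the jump point process at level $a$ and noting that ${\lbrace\mass{\Delta}\leq y\rbrace}$ forces the upper restriction to be empty. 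This is exactly the mechanism that makes the Pitman--Yor formula collapse on ${V\geq 1/2}$, so your argument is a self-contained derivation of the needed special case rather than a citation; what it buys is transparency about \emph{why} the constraint ${t\leq 1}$ is essential (no second jump can compete with $\Delta_1$), at the cost of a little more bookkeeping. Both computations land on ${f_{\Delta_1,\mass{\xi}}(a,t)=a\rho(a)f_{\mass{\Delta}}(at)}$; note this exposes a typo in the paper's definition of $g_\rho$, which writes $f_{\Delta_1}(at)$ where $f_{\mass{\Delta}}(at)$ is meant (as \cref{corollary:pk:basic} and the proof of \cref{theorem:PK} confirm), so your identification of the result with $g_\rho(a,t)$ is the intended one.
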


\begin{proof}%[Proof of \cref{lemma:densities}]
  The total mass $\mass{\Delta}$ and largest normalized jump $V$ of the subordinator $(\Delta_1,\Delta_2,\ldots)$ 
  are related to the variables $\Delta_1$ and $\mass{\Delta}$ by
  \begin{equation}
    \label{eq:proof:densities:aux:1}
    V=\frac{\Delta_1}{\mass{\Delta}}=\frac{1}{1+\mass{\xi}}
    \qquad\text{ and }\qquad
    \mass{\Delta}=(1+\mass{\xi})\Delta_1\;.
  \end{equation}
  The hypothesis ${\mass{\xi}\leq 1}$ is hence equivalent to ${V\geq\frac{1}{2}}$. 
  Let ${f_{\mass{\Delta}}}$ be the Lebesgue density of $\mass{\Delta}$.
  By \citep[][Proposition 45]{Pitman:Yor:1997}, the joint density of
  $\mass{\Delta}$ and $V$ is
  ${f_{V,\mass{\Delta}}(a,\tau)=\tau\rho(\tau a)f_{\mass{\Delta}}(\tau-a\tau)}$,
  provided that ${V\geq\frac{1}{2}}$. The change of variables
  ${\tau\mapsto(1+t)a}$ and ${v\mapsto\frac{1}{1+t}}$, and renormalizing
  with respect to $t$, yields
  ${f_{\Delta_1,\mass{\xi}}(a,t)
    =
    {\textstyle\frac{1}{1+t}}f_{V,\mass{\Delta}}\bigl((1+t)a,{\textstyle\frac{1}{1+t}}\bigr)}$,
  which is just ${g_{\rho}(a,t)}$.
\end{proof}

Next, consider the effect of substituting $\rho$ by $\rho_a$ in a Poisson-Kingman partition.

\begin{lemma}
  \label{lemma:PK:at}
  Let $\rho$ be a \Levy measure and ${\rho_a(s)=a\rho(as)\indicator{s\leq 1}}$, for some ${a>0}$.
  Then ${\PK(\rho_a|t)=\PK(\rho|at)}$ whenever ${t\in[0,1]}$.
\end{lemma}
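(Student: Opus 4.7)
The plan is to split the identity into two steps: a scaling step that relates $\PK(\rho|at)$ to an \emph{untruncated} analogue of $\rho_a$, followed by a truncation step that exploits the hypothesis $t\leq 1$.

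First, I would introduce the auxiliary \Levy density $\rho'(s):=a\rho(as)$ on $(0,\infty)$, i.e., $\rho_a$ with its indicator removed. The standard change-of-variable for Poisson intensities says that if $(\Delta_k)$ are the ranked points of a Poisson process with intensity $\rho$ and total mass $T=\sum_k\Delta_k$, then $(\Delta_k/a)$ is a Poisson process with intensity $\rho'$ and total mass $T/a$. The normalized weight sequences coincide, $(\Delta_k/a)/(T/a)=\Delta_k/T$, so conditioning on $T=at$ in the first description picks out exactly the same law of ranked normalized weights as conditioning on $T/a=t$ in the second; hence $\PK(\rho|at)=\PK(\rho'|t)$ for every $t>0$. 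This step uses no restriction on $t$.

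Second, I would show $\PK(\rho'|t)=\PK(\rho_a|t)$ whenever $t\in[0,1]$. Let $N'$ be a Poisson random measure on $(0,\infty)$ with intensity $\rho'$, and decompose it into the independent restrictions $N_a:=N'|_{[0,1]}$ (whose intensity is precisely $\rho_a$) and $N^+:=N'|_{(1,\infty)}$. Write $T^{(a)}:=\int s\,dN_a(s)$ and $S:=\int s\,dN^+(s)$, so the total mass of $N'$ is $T'=T^{(a)}+S$. Because every atom of $N'$ is bounded above by $T'$, the event $\{T'=t\}$ with $t\leq 1$ forces $S=0$, and hence $N^+=\emptyset$. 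Using the independence of $N_a$ and $N^+$, this yields
\[
  \Law(N'\,|\,T'=t)=\Law(N_a\,|\,T^{(a)}=t)\otimes\delta_{\emptyset},
\]
so the ranked normalized weight sequences on both sides coincide, which is the claimed PK identity.

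Combining the two steps gives $\PK(\rho|at)=\PK(\rho'|t)=\PK(\rho_a|t)$ for $t\in[0,1]$. The only technical obstacle is making the disintegration $\Law(\cdot\,|\,T'=t)$ rigorous, which requires $T'$ to admit a density; this is the same regularity already invoked in \cref{corollary:pk:basic} via $f_{\mass{\Delta}}$, so it is routine in the setting of the paper. The substantive content is the elementary observation that the truncation at $1$ becomes vacuous once one conditions on a total mass not exceeding $1$.
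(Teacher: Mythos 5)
Your proof is correct, and it takes a genuinely different route from the paper's. The paper works at the level of exchangeable partitions: it writes out the EPPF of $\PK(\rho_a|t)$ using Pitman's explicit formula (Theorem 4 of the Poisson--Kingman paper), observes that the indicator $\indicator{su_k\leq 1}$ is identically $1$ because ${s\leq t\leq 1}$ and ${u_k\leq 1}$, and then performs the substitution ${v=s/a}$ inside the integral to land on the EPPF of $\PK(\rho|at)$ — so scaling and truncation are handled in a single analytic computation, and the identity of the weight laws follows via Kingman's correspondence. You instead argue directly on the point processes, factoring the claim into an exact scaling identity $\PK(\rho|at)=\PK(\rho'|t)$ (valid for all $t$, by the mapping theorem and the invariance of normalized weights under dilation) and a truncation identity $\PK(\rho'|t)=\PK(\rho_a|t)$ for ${t\leq 1}$, where the independent restriction of the Poisson process to ${(1,\infty)}$ is forced to be empty on the event ${\lbrace T'=t\leq 1\rbrace}$ since any such atom alone would exceed the total mass. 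Both proofs ultimately rest on the same observation — the truncation at $1$ is vacuous once the total mass is at most $1$ — but yours isolates it cleanly, avoids the EPPF machinery, and yields the identity of the ranked-weight laws directly rather than through partition structures. The only point needing care in your version is the disintegration: you should note that $S:=\int s\,dN^{+}(s)$ takes values in ${\lbrace 0\rbrace\cup(1,\infty)}$ with ${\abstmeasure(S=0)=e^{-\int_a^{\infty}\rho(u)du}>0}$ (finite because $\rho$ is a \Levy density), so that for ${t\leq 1}$ the density of $T'$ factors as ${f_{T'}(t)=f_{T^{(a)}}(t)\,\abstmeasure(S=0)}$ and the conditional law is exactly the product you claim; you flag this, and it is routine under the density assumption already in force in the paper.
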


\begin{proof}
  By \citep[][Theorem 4]{Pitman:2003:1}, the EPPF of a $\PK(\nu|t)$ partition for some
  \Levy density $\nu$ is given by
  \begin{equation}
    \label{eq:pitman:PK:EPPF}
    p_{\nu}\bigl(|A_1|,\ldots,|A_k|\big\vert t\bigr)=
    \int_0^t \frac{f(t-s)}{t^nf(t)}s^{n+K-1}
    \int_{\mathcal{S}_k}\Bigl(\prod_{k=1}^K\nu(su_k)u_k^{|A_k|}\Bigr)
    du_{1:K-1}ds\;,
  \end{equation}
  where $\mathcal{S}_K$ denotes the standard simplex in $\mathbb{R}^K$,
  ${du_{1:K-1}=du_1\cdots du_{K-1}}$, and
  $f$ is the density of the total mass of the random measure defined by $\nu$.
  Hence,
  \begin{equation*}
    p_{\rho_a}\bigl(|A_1|,\ldots,|A_k|\big\vert t\bigr)=
    \int_0^{t} \frac{f_{\Delta}(t-s)}{t^nf_{\Delta}(t)}\frac{s^{n+K-1}}{a^K}
    \int_{\mathcal{S}_k}\Bigl(\prod_{k=1}^K\rho(asu_k)\indicator{su\leq 1}u_k^{|A_k|}\Bigr)
    du_{1:K-1}ds\;.
  \end{equation*}
  Since ${s\leq t\leq 1}$ by hypothesis, and ${u_k\leq 1}$, the indicator 
  ${\indicator su \leq 1}$ is always 1, and
  a change of variables ${v(s):=s/a}$ yields
  \begin{equation*}
    p_{\rho_a}\bigl(|A_1|,\ldots,|A_k|\big\vert t\bigr) =
    \int_0^{at} \frac{f_{\Delta}(at-s)}{a^nt^nf_{\Delta}(at)}s^{n+K-1}
    \int_{\mathcal{S}_k}\Bigl(\prod_{k=1}^K\rho(su_k)u_k^{|A_k|}\Bigr)
    du_{1:K-1}ds\;,
  \end{equation*}
  which by \eqref{eq:pitman:PK:EPPF}
  is just ${p_{\rho}\bigl(|A_1|,\ldots,|A_k|\big\vert at\bigr)}$.
\end{proof}

\begin{proof}[Proof of \cref{theorem:PK}]
 Since ${\mass{\xi}\leq 1}$, 
  the conditional density of $\mass{\xi}$ given ${\Delta_1=a}$ is
  \begin{equation}
    f_{\mass{\xi}|\Delta_1}(t|a)
    =
    \frac{g_\rho(a,t)}{\int_0^1 g_\rho(a,s)ds}
    =
    \frac{a\rho(as)f_{\mass{\Delta}}(t)}{\int_0^1 a\rho(as)f_{\mass{\Delta}}(s)ds}
    \qquad\text{ for }t\in[0,1],
  \end{equation}
  by \cref{lemma:densities}. The random measure 
  ${\overline{\xi}|(\Delta_1=a)}$ is hence a Poisson-Kingman measure with
  \Levy density $\rho_a$, and by \cref{lemma:PK:at}, we have
  \begin{equation}
    \Law(\overline{\xi}|\Delta_1=a,\mass{\xi}=t)=\PK(\rho_a|t)=\PK(\rho|at) \qquad\text{ whenever }t\leq 1\;,
  \end{equation}
  which proves \eqref{eq:PK:a:t}.
  To establish \eqref{eq:PK:mixing:measure}, 
  note the conditional density of ${\DeltaAlt|\TAlt}$ satisfies\\
  ${f_{\DeltaAlt|\TAlt}(a|t)\propto \omega(a,t)g(a,t)}$ by \cref{lemma:densities}. Hence,
  \begin{equation*}
    \Law(\overline{\xi}|\TAlt=t)
    =
    \int_{\mathbb{R}_+}\Law(\overline{\xi}|\DeltaAlt=a,\TAlt=t)f_{\DeltaAlt|\TAlt}(a|t)da
    \propto\int_{\mathbb{R}_+}\PK(\rho|at)\omega(a,t)g(a,t)da\;.
  \end{equation*}
  Changing variables to ${z(a)=at}$ yields
  \begin{equation}
    \Law(\overline{\xi}|\TAlt=t)
    \propto\int_{\mathbb{R}_+}\PK(\rho|z)\omega({\textstyle\frac{z}{t}},t)g({\textstyle\frac{z}{t}},t)tdz
    \propto\int_{\mathbb{R}_+}\PK(\rho|z){\textstyle\frac{1}{t}}
    \omega({\textstyle\frac{z}{t}},t)g({\textstyle\frac{z}{t}},t)dz\;.
  \end{equation}
  Since ${\PK(\rho,\PAlt_t)=\int_{\mathbb{R}_+}\PK(\rho|z)\PAlt_t(dz)}$, indeed ${\PAlt_t=\Law(\DeltaAlt t|\TAlt=t)}$
  as claimed.
\end{proof}

The examples in \cref{result:example:gamma}--\cref{result:example:PY}
are obtained as follows:
In the gamma case, in \cref{result:example:gamma}, 
the total mass $\mass{\Delta}$ of the subordinator has 
distribution $\GammaDist(\theta,1)$. For this choice of $\lambda$,
Poisson-Kingman model satisfy ${\PK(\rho|t)=\PK(\rho)=\PD(0,\theta)}$ for all ${t>0}$ 
(see \citep[\S 5.1]{Pitman:2003:1}). Substituting
into \cref{corollary:pk:basic} hence yields
\begin{equation}
\Law(Q_{1:\infty}|\mass{\xi}=t)=\PK(\rho,\PAlt_t)=\PK(\rho)=\PD(0,\theta)
\qquad\text{ for } t\in(0,1]\;.
\end{equation}
Note that, in this case,
${\PAlt_t(dz)\propto z^{\theta-1}e^{-z(1+t)/t}}$.

In the scale-invariant case ${\lambda(s)=\theta s^{-1}}$
in \cref{result:example:scale:invariant}, $\xi|(\Delta_1=a)$ has
\Levy density ${\lambda_a=\theta s^{-1}\indicator{s\leq 1}}$
by \eqref{eq:rho:scaled}, and the total mass
$\mass{\xi}$ is a Dickman variable $D_{\theta}$
as in \cref{sec:Dickman}.
\cref{theorem:PK} shows
${\Law(Q_{1:\infty}|\mass{\xi}=t,\DeltaAlt=a)=\PK(\rho|D_{\theta}=at)}$.
To evaluate ${\PK(\rho|D_{\theta}=at)}$, we reduce to a gamma \Levy density
${\rho'(s)=\theta s^{-1}e^{-s}}$, using the invariance 
\begin{equation}
  \label{eq:PK:exp:tilt}
  \PK(e^{-as}\rho|t)=\PK(\rho|t)\qquad\text{ for all }t>0
\end{equation}
of $\PK$ measures under exponential tilting \citep[\S 4.2]{Pitman:2003:1}.
Since exponentially tilting $\rho_a$ yields 
\begin{equation}
e^{-as}\rho_a(s)=\theta s^{-1}e^{-as}\indicator{s\leq 1}=\rho'_a(s)\;,
\end{equation}
we obtain ${\PK(\rho|D_{\theta}=at)=\PK(\rho'|\mass{\xi}=at)=\PD(0,\theta)}$.

In \cref{result:example:stable}, the subordinator is $\alpha$-stable, hence 
${\rho(s)=cs^{-1-\alpha}}$ and ${T_{\Delta}\sim\Stable(\alpha)}$. 
By \cref{corollary:pk:basic}, 
${  \PAlt_t(dz)\propto z^{-\alpha-1}f_{\mass{\Delta}}(z)dz}$,
which is the law of a polynomially $\alpha$-tilted, $\alpha$-stable variable $S_{\alpha,\alpha}$.
Hence,
\begin{equation}
    \Law(Q_{1:\infty}|\mass{\xi}=t)=\PK(\rho,\Law(S_{\alpha,\alpha}))\;,
\end{equation}
which is the $\PD(\alpha,\alpha)$ distribution, see \citep{Pitman:2003:1}.

In \cref{result:example:PY}, $\rho$ is an $\alpha$-stable subordinator and $\mass{\Delta}$
is hence an $\alpha$-stable random variable with density $f_{\alpha}$.
By \cref{corollary:pk:general}, 
\begin{equation}
  \Law(Q_{1:\infty}|\mass{\xi}=t)=\PK(\rho,z^{-\theta}f_{\alpha}(z))\;,
\end{equation}
which is the $\PD(\alpha,\theta)$ distribution \citep{Pitman:2003:1}.

\subsection{Proofs for \cref{sec:tv}}
The rows sums follow a generalized binomial distribution, the Poisson approximation error 
can be bounded using Le Cam's inequality \citep{LeCam:1960:1}:
\begin{proof}[Proof of \cref{result:BP:bound}]
  By Le Cam's inequality, the law of $\RS_i$ is approximately
  Poisson, with approximation error ${\sum_k W_k^2}$. 
  Since ${\mass{\xi}<\infty}$ a.s. by hypothesis,
  $\xi$ can be normalized, and we write ${\tilde{W}_1}$ for the first size-biased pick from $W_{1:\infty}$.
  The variable ${\tilde{V}_1:=\tilde{W}_1/\mass{\xi}}$ is hence the 
  first size-biased weight of a random probability measure, and satisfies 
  the identity
  ${\mean{g(\tilde{V}_1)}=\mean{{\textstyle\sum_k} \tilde{V}_k g(\tilde{V}_k)}}$
  for any measurable ${g:(0,\infty)\rightarrow(0,\infty)}$, see \citep[][Eq. (2.22)]{Pitman:2006}. 
  Choosing ${g(v)=t^2v}$, 
  \begin{equation}
    \Bigmean{{\textstyle \sum_k} W_k^2}
    =
    \mathbb{E}_{\mass{\xi}}\Bigl[\Bigmean{{\textstyle \sum_k}t^2\tilde{V}_k^2\Big|\mass{\xi}=t}\Bigr]
    =
    \mathbb{E}_{\mass{\xi}}\bigl[\mean{t^2\tilde{V}_1|\mass{\xi}=t}\bigr]
    =
    \mean{\mass{\xi}^2\tilde{V}_1}
  \end{equation}
  as claimed.
\end{proof}

The next proof uses the following auxiliary result, which
paraphrases \citep[][Corollary 4.1]{Arratia:Barbour:Tavare:1999:1}:
\begin{lemma}
  Let $P$ and $P'$ be two probability measures on $\xspace$ with ${P\ll P'}$, 
  and let ${\phi:\xspace\rightarrow\yspace}$
  be a measurable mapping into a space $\yspace$. Suppose $\phi$ is sufficient for $P$ and $P'$;
  that is, there is some function $f$ such that ${dP/dP'(x)=f(\phi(x))}$. 
  Then the total variation distance between the image measures $\phi(P)$ and $\phi(P')$ satisfies
  ${\dtv(P,P')=\dtv(\phi(P),\phi(P'))}$.
\end{lemma}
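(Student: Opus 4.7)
The plan is to reduce everything to the Radon--Nikodym characterization of total variation distance, namely
\[
\dtv(P,P') = \tfrac{1}{2}\int_{\xspace}\bigl|{\textstyle\frac{dP}{dP'}}(x)-1\bigr|\,dP'(x)\;,
\]
which is available whenever ${P\ll P'}$, and then exploit that the Radon--Nikodym derivative factors through $\phi$.

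First I would observe that one inequality is automatic: for every measurable ${B\subseteq\yspace}$, the set ${A:=\phi^{-1}(B)}$ satisfies ${\phi(P)(B)-\phi(P')(B)=P(A)-P'(A)}$, so taking a supremum over $B$ yields ${\dtv(\phi(P),\phi(P'))\leq\dtv(P,P')}$. This direction does not use sufficiency.

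For the reverse inequality, I would identify $f$ as the Radon--Nikodym derivative of the pushforwards. For any measurable ${B\subseteq\yspace}$, the sufficiency hypothesis and the change-of-variables formula give
\[
\phi(P)(B)=P(\phi^{-1}(B))=\int_{\phi^{-1}(B)}f(\phi(x))\,dP'(x)=\int_{B}f(y)\,d\phi(P')(y)\;,
\]
so that ${d\phi(P)/d\phi(P')=f}$ almost everywhere with respect to $\phi(P')$. Inserting this into the Radon--Nikodym formula for total variation yields
\[
\dtv(\phi(P),\phi(P'))=\tfrac{1}{2}\int_{\yspace}|f(y)-1|\,d\phi(P')(y)=\tfrac{1}{2}\int_{\xspace}|f(\phi(x))-1|\,dP'(x)=\dtv(P,P')\;,
\]
where the middle equality is another application of the change-of-variables formula, and the last uses ${dP/dP'(x)=f(\phi(x))}$ again. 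Combined with the trivial direction, this yields the claimed equality.

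The only subtlety, and what I expect to be the sole technical point to check, is ensuring the change-of-variables step is applied correctly: one has to verify the measurability of $f$ as a function on $\yspace$ (which can be arranged by replacing $f$ with a measurable version, since two versions agree $\phi(P')$-a.e. and this does not affect the integral), and note that ${\phi(P)\ll\phi(P')}$ follows automatically from ${P\ll P'}$. Once these measure-theoretic bookkeeping points are dispatched, the argument is essentially a one-line computation.
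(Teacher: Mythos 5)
Your proof is correct. The paper does not actually prove this lemma---it is stated as a paraphrase of Corollary 4.1 of Arratia, Barbour and Tavar\'e (1999) and used as a black box---but your argument (the data-processing inequality for one direction, and for the other the identification ${d\phi(P)/d\phi(P')=f}$ via change of variables combined with the $L^1$ representation ${\dtv(P,P')=\tfrac{1}{2}\int|dP/dP'-1|\,dP'}$) is precisely the standard proof of that result, and the measurability caveat you flag is the only point requiring care and is dispatched correctly.
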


\begin{proof}[Proof of \cref{result:PK:sufficiency}]
  Abbreviate ${X_{\beta}:=X\cap[0,\beta]}$.
  From \cref{theorem:PK}, we may substitute ${\Law(X_{\beta}|T=t)}$ for
  $\Law(V)$ in \eqref{eq:total:variation}.
  By \citep[][Corollary 4.1]{Arratia:Barbour:Tavare:1999:1}, we hence have 
  to show that $\tau_{\beta}$ is a sufficient statistic for the pair 
  $\Law(X_{\beta})$ and $\Law(X_{\beta}|T=t)$.
  Let $L$ be Lebesgue measure on $\mathbb{R}_+$.
  Since $T$ has a density ${f_T=d\Law(T)/d L}$, 
  %the joint law ${\Law(X_{\beta},T)}$ is absolutely continuous with respect to 
  %${\Law(X_{\beta})\otimes\lambda}$, and 
  the conditional density ${f_{T|X_{\beta}}=d\Law(T|X_{\beta})/d L}$
  exists. As the continuous-time process ${(T_{\beta})_{\beta\in\mathbb{R}_+}}$ is Markovian,
  $T$ and $X_{\beta}$ are conditionally independent given $T_{\beta}$, and 
  $f_{T|X_{\beta}}$ decomposes as ${f_{T|X_{\beta}}(t|x)=\tilde{f}(t|\tau_{\beta}(x))}$
  for a suitable conditional density function $\tilde{f}$. 
  The joint law then has density
  \begin{equation}
    \frac{\abstmeasure(X_{\beta}\in dx,T\in dt)}{\abstmeasure(X_{\beta}\in dx)\otimes\lambda(dt)}
    =
    f_{T|X_{\beta}}(t|x)\mathbf{1}(x)\;,
  \end{equation}
  where $\mathbf{1}$ is the constant function with value 1. This means the density
  \begin{equation}
    \frac{\abstmeasure(X_{\beta}\in dx|T=t)}{\abstmeasure(X_{\beta}\in dx)}
    =
    \frac{f_{T|X_{\beta}}(t|x)}{f_T(t)}
    =
    \frac{\tilde{f}(t|\tau_{\beta}(x))}{f_{T}(t)}\;.
  \end{equation}
  depends on $X$ only through $\tau_{\beta}$, 
  and $\tau_{\beta}$ is indeed sufficient for
  ${(\Law(X|T),\Law(X))}$. 
\end{proof}

\newpage
\bibliography{JOY_references.bib}
\bibliographystyle{natbib}

\end{document}